\documentclass[a4paper,12pt]{amsart}
\setlength{\oddsidemargin}{30mm}
\addtolength{\oddsidemargin}{-1in}
\setlength{\evensidemargin}{30mm}
\addtolength{\evensidemargin}{-1in}
\setlength{\textwidth}{150mm}
\usepackage{amsmath,amssymb,amsthm}
\usepackage{amscd}
\usepackage{array,enumerate}
\newtheorem{Thm}{Theorem}[section]
\newtheorem{Prop}[Thm]{Proposition}
\newtheorem{Lem}[Thm]{Lemma}
\newtheorem{Cor}[Thm]{Corollary}
\newtheorem{Thmint}{Theorem}[section]
\newtheorem{Corint}[Thmint]{Corollary}

\theoremstyle{definition}
\newtheorem{Rem}[Thm]{Remark}

\newtheorem{Def}[Thm]{Definition}
\newtheorem{Exm}[Thm]{Example}
\newtheorem{Exms}[Thm]{Examples}

\newtheorem{Prob}[Thm]{Problem}
\newcommand{\Cs}{\mbox{${\rm C}^\ast$}}
\newcommand{\id}{\mbox{\rm id}}
\newcommand{\GL}{\mbox{\rm GL}}

\newcommand{\ad}{\mbox{\rm ad}}
\setcounter{tocdepth}{1}
\title[Amenable minimal Cantor systems]{Amenable minimal Cantor systems of free groups arising from diagonal actions}
\author{Yuhei Suzuki}
\subjclass[2000]{Primary~37B05, Secondary~46L80, 54H20}
\keywords{\Cs -algebras; amenable actions; free groups; Cantor systems.}
\address{Department of Mathematical Sciences,
University of Tokyo, Komaba, Tokyo, 153-8914, Japan}
\email{suzukiyu@ms.u-tokyo.ac.jp}
\begin{document}
\begin{abstract}
We study amenable minimal Cantor systems of free groups
arising from the diagonal actions of the boundary actions and certain Cantor systems.
It is shown that every virtually free group
admits continuously many amenable minimal Cantor systems
whose crossed products are mutually non-isomorphic Kirchberg algebras in the UCT class
(with explicitly determined $K$-theory).
%In particular this shows that there exist continuously many Kirchberg algebras in the UCT class
%each of which is decomposed as the crossed product of an amenable minimal Cantor systems of any virtually free group.
The technique developed in our study also enables us to compute the $K$-theory of
certain amenable minimal Cantor systems.
We apply it to the diagonal actions of
the boundary actions and the products of the odometer transformations,
and determine their $K$-theory.
Then we classify them in terms of the topological full groups, continuous orbit equivalence,
strong orbit equivalence, and the crossed products.
\end{abstract} 
\maketitle
\tableofcontents

\section{Introduction and main results}
\subsection{Introduction}\label{ss:Int}
The subject of this paper is amenable minimal Cantor systems of free groups.
The Cantor set is characterized by the following four properties: compactness, total disconnectedness,
metrizability, and not having isolated points.
From this characterization, the property `$X$ is (homeomorphic to) the Cantor set' is preserved by many
operations. For example, it is preserved by taking a finite direct sum, a countable direct product, and
the projective limit of a sequence with surjective connecting maps.
Moreover, the properties which characterize the Cantor set make topological difficulties small in many situations.
By these properties, the Cantor set can be considered as a topological analogue of the Lebesgue space without atoms.
Moreover, in the category of minimal dynamical systems on metrizable compact spaces,
the Cantor set has a ``universal'' property in the following sense.
For any minimal dynamical system of a countable infinite group $\Gamma$ on a metrizable compact space,
it is realized as a quotient of a minimal Cantor $\Gamma$-system.
(Here and throughout this paper, we call a dynamical system on the Cantor set as a Cantor system.)
This follows from a similar proof to the case $\Gamma=\mathbb{Z}$; see \cite[Section 1]{GPS}.
Therefore the study of minimal Cantor systems is important.
Furthermore, Cantor systems themselves are attractive objects.
The underlying spaces of many important dynamical systems are homeomorphic to the Cantor set.
This includes certain symbolic dynamical systems, the boundary actions of virtually free groups, and the odometer transformations.

The free group $\mathbb{F}_n$ is one of the most interesting and tractable non-amenable groups.
Most of known non-amenable groups contain $\mathbb{F}_n$, and it has nice properties: the universal property (namely, the freeness), exactness, the Haagerup property, weak amenability,
hyperbolicity (with the nice boundary), and so on.
Hence, to understand the phenomena of non-amenable groups,
the free groups are suitable objects for the first study.

The aim of this paper is to construct and study amenable minimal Cantor systems of free groups.
This is motivated by the following two natural questions.
The first is finding new concrete and tractable presentations of Kirchberg algebras in the UCT class,
which is asked in the book \cite{Ror} of R\o rdam. (See the last paragraph of page 85.)
The second is that how well the crossed products of amenable minimal Cantor $\mathbb{F}_n$-systems remember
the information about the original systems.

Note that for the case of the group $\mathbb{Z}$, analogues of both questions have complete answers.
They are the celebrated results of Giordano, Putnam, and Skau \cite{GPS}.
For the first question, they have shown that every simple unital A$\mathbb{T}$-algebra of real rank zero whose $K_1$-group is isomorphic to $\mathbb{Z}$ is presented as the crossed product
of a minimal Cantor $\mathbb{Z}$-system and this is the only possible case.
For the second question, they have shown that two minimal Cantor $\mathbb{Z}$-systems
have isomorphic crossed products
exactly when they are strongly orbit equivalent.

To start the study on these problems, we need well-understandable examples of amenable minimal Cantor $\mathbb{F}_n$-systems as many as possible.
Until now, only a few examples have been constructed and studied.
In this paper, we construct continuously many examples of amenable minimal Cantor $\mathbb{F}_n$-systems whose crossed products are completely determined.
As a consequence, for the first question, we obtain new concrete presentations for certain continuously many Kirchberg algebras in the UCT class.
For the second one, our examples give a hopeful prospect.
As examples, we show that the diagonal actions of the boundary actions and the products of odometer
transformations are classified in terms of continuous orbit equivalence by using a ${\rm C}^{\ast}$-algebraic technique.

A recent work of G. A. Elliott and A. Sierakowski \cite{ES} gives us an example of
amenable minimal Cantor $\mathbb{F}_n$-systems which are distinguished by $K$-theory.
They construct an amenable minimal free Cantor $\mathbb{F}_n$-system
whose $K_0$-group vanishes.
In particular, it has the different $K_0$-group from that of the boundary action.
Their construction is based on the idea developed in the paper \cite{RS}.
Our strategy is different from that of \cite{ES}.
We construct amenable minimal Cantor $\mathbb{F}_n$-systems from the diagonal actions.
This construction is quite simple and gives many fruitful and concrete examples of amenable minimal Cantor $\mathbb{F}_n$-systems.

\subsection{Main results}
Here we collect the main results of this paper.

Throughout the paper, the $K$-theory of the reduced crossed product of a dynamical system $\gamma$
is referred to as the $K$-theory of $\gamma$ for short.
\begin{Thmint}[Theorem \ref{Thm:Inf}]\label{Thmint:Inf}
Let $G$ be a subgroup of $\mathbb{Q}^{\oplus\infty}$ which contains $\mathbb{Z}^{\oplus\infty}$ as a subgroup
of infinite index.
Let $2\leq n<\infty$ and $k$ be an integer.
Then there exists an amenable minimal Cantor $\mathbb{F}_n$-system that satisfies the following properties.
\begin{itemize}
\item The pair of $K_0$-group and the unit $[1]_0$ is isomorphic to
\[\left(G\oplus\Lambda_{G, n}, 0\oplus [k(n-1)^{-1}]\right),\]
where $\Lambda_{G,n}$ is the subgroup of $\mathbb{Q}/\mathbb{Z}$ consisting of
elements whose order divides the product of $(n-1)$ and the order of a finite subgroup of $G/\mathbb{Z}^{\oplus\infty}$.
\item The $K_1$-group is isomorphic to $\mathbb{Z}^{\oplus \infty}$.
\item The crossed product is a Kirchberg algebra in the UCT class.
\end{itemize}
\end{Thmint}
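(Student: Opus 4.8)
The plan is to realize the desired system as the diagonal action of $\mathbb{F}_n$ on $X:=\partial\mathbb{F}_n\times Y$, where $\partial\mathbb{F}_n$ carries the boundary action and $Y$ is an auxiliary minimal Cantor $\mathbb{F}_n$-system---built as an inverse limit of finite systems (a generalized odometer)---chosen so that its $K$-theoretic data encode $G$. Since a product of two Cantor sets is a Cantor set, $X$ is a Cantor system. The computational engine is the Pimsner--Voiculescu-type six-term exact sequence attached to the free action of $\mathbb{F}_n$ on its Cayley tree: writing $a_1,\dots,a_n$ for the generators, it reduces $K_*(C(X)\rtimes\mathbb{F}_n)$ to the kernel and cokernel of the single homomorphism $\iota=\bigoplus_{i=1}^n(\id-(a_i)_*)\colon\bigoplus_{i=1}^n K_*(C(X))\to K_*(C(X))$. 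Because $X$ is totally disconnected, $K_1(C(X))=0$ and $K_0(C(X))=C(X,\mathbb{Z})$, so the sequence collapses to the clean identifications $K_0(C(X)\rtimes\mathbb{F}_n)\cong\coker\iota=H_0(\mathbb{F}_n,C(X,\mathbb{Z}))$ and $K_1(C(X)\rtimes\mathbb{F}_n)\cong\ker\iota=H_1(\mathbb{F}_n,C(X,\mathbb{Z}))$. Thus the entire problem is transported to the group homology of the diagonal module $C(X,\mathbb{Z})\cong C(\partial\mathbb{F}_n,\mathbb{Z})\otimes C(Y,\mathbb{Z})$.

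Next I would carry out this homology computation. For the boundary factor alone, $C(\partial\mathbb{F}_n,\mathbb{Z})$ is the standard Cuntz--Krieger module, and a direct Smith-normal-form calculation of $\id-A^{t}$ for the $2n\times2n$ admissibility matrix gives $H_0(\mathbb{F}_n,C(\partial\mathbb{F}_n,\mathbb{Z}))\cong\mathbb{Z}^n\oplus\mathbb{Z}/(n-1)$ and $H_1\cong\mathbb{Z}^n$, with the class of the constant function $\mathbf{1}$ equal to $\bigl(0,[1/(n-1)]\bigr)$. This already isolates the origin of the $(n-1)$-torsion and of the denominator $(n-1)$ in the unit. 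Feeding $Y$ through the same length-one free resolution of $\mathbb{Z}$ over $\mathbb{Z}[\mathbb{F}_n]$, the coinvariants of $C(Y,\mathbb{Z})$ contribute the prescribed divisibilities; the diagonal tensor module then yields $H_0\cong G\oplus\Lambda_{G,n}$---the torsion-free summand $G$ arising from the $Y$-homology (with the finite-rank boundary contribution absorbed into $\mathbb{Z}^{\oplus\infty}\le G$) and the torsion group $\Lambda_{G,n}$ arising from the interplay of the factor $(n-1)$ with the divisible quotient $G/\mathbb{Z}^{\oplus\infty}$---while $H_1\cong\mathbb{Z}^{\oplus\infty}$. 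The infinite-index hypothesis on $\mathbb{Z}^{\oplus\infty}\le G$ is precisely what guarantees that $G/\mathbb{Z}^{\oplus\infty}$ supplies the torsion defining $\Lambda_{G,n}$ and that the rank of $H_1$ is infinite.

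Independently I would verify that the crossed product is a Kirchberg algebra in the UCT class. Amenability of the diagonal action follows from the $\mathbb{F}_n$-equivariant projection $X\to\partial\mathbb{F}_n$ onto the amenable boundary action, so the reduced and full crossed products coincide and are nuclear. Topological freeness of the diagonal action is inherited from the boundary action, since $\{x\in\partial\mathbb{F}_n:gx=x\}$ having empty interior forces the same for $\{(x,y):g(x,y)=(x,y)\}$; combined with minimality of the diagonal action---which I would deduce from the strong proximality of the boundary action together with minimality of $Y$---this gives simplicity. Pure infiniteness follows because the boundary action is a strong boundary ($n$-filling) action, a property that passes to the minimal extension $X$ (a Laca--Spielberg/Jolissaint--Robertson type argument). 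Hence $C(X)\rtimes\mathbb{F}_n$ is separable, nuclear, simple and purely infinite. Finally, as $\mathbb{F}_n$ is a-T-menable and the action is amenable with commutative coefficients, the crossed product lies in the UCT class, so it is a Kirchberg algebra in the UCT class.

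The main obstacle is the homology of the \emph{diagonal} tensor module: there is no naive K\"unneth formula for group homology with diagonal coefficients, so I would have to run the rank-$n$ resolution directly on $C(\partial\mathbb{F}_n,\mathbb{Z})\otimes C(Y,\mathbb{Z})$ and identify both the torsion quotient defining $\Lambda_{G,n}$ and the precise position of the unit. Pinning down the unit class $[1]_0=0\oplus[k(n-1)^{-1}]$ for \emph{every} integer $k$ is the most delicate point: the construction produces it at $k=1$ automatically, and realizing an arbitrary residue requires an additional adjustment of the tower defining $Y$ (a suitable clopen relabeling) that shifts the class of $\mathbf{1}$ to $[k/(n-1)]$ inside $\tfrac{1}{n-1}\mathbb{Z}/\mathbb{Z}\subseteq\Lambda_{G,n}$ while leaving $K_0$, $K_1$ and the $G$-component of the unit untouched. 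The remaining point demanding care is the realization step---constructing, for \emph{every} admissible $G$, an odometer-type system $Y$ whose coinvariants have exactly the prescribed divisible structure.
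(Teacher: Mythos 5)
Your skeleton coincides with the paper's: the system is a diagonal action $\beta\times\alpha$ with $\alpha$ an inverse limit of finite $\mathbb{F}_n$-systems, amenability comes from the amenable boundary quotient, and the Pimsner--Voiculescu sequence (with $K_1(C(X))=0$) reduces everything to the kernel and cokernel of $\sum_{i}(\mathrm{id}-(s_i)_*)$ on $C(X,\mathbb{Z})$. But the actual content of the proof is exactly the two steps you set aside as ``obstacles,'' and without them there is no proof. It is not enough to say $Y$ is ``chosen so that its $K$-theoretic data encode $G$'': at the $m$-th finite stage the free part of $K_0$ is $\mathbb{Z}^{k_m}$ with rank forced by Schreier's formula, so $G$ must first be presented as an inductive limit $\varinjlim(\mathbb{Z}^{k_m},A_m)$ in which the ranks satisfy $k_{m+1}=l_m(k_m-1)+1$ with $l_m$ equal to the product of the elementary divisors of $A_m$ --- a nontrivial compatibility between the algebraic presentation of $G$ and the index growth of the tower, which is the point of Lemma \ref{Lem:Ind}; and one must then realize each prescribed matrix $A_m$ as the free part of $K_0$ of the inclusion $C(\partial\mathbb{F}_n\times(\mathbb{F}_n/\Gamma_{m-1}))\rtimes\mathbb{F}_n\hookrightarrow C(\partial\mathbb{F}_n\times(\mathbb{F}_n/\Gamma_m))\rtimes\mathbb{F}_n$. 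The paper achieves this control (Lemmas \ref{Lem:auto}, \ref{Lem:auto2}, \ref{Lem:Conn}) by choosing explicit free bases $W_m$ of the subgroups $\Gamma_m$, computing the inclusion in Spielberg's Cuntz--Krieger picture, and correcting by automorphisms of the free group to reach an arbitrary $BQAC$; nothing in your outline produces such control, and a generic tower gives a completely different answer (the odometer towers of Section \ref{sec:odo}, for instance, only ever produce free parts of the special form $\bigl(\bigoplus_i\Upsilon(N_i)\bigr)\oplus\mathbb{Z}^{\oplus\infty}$, never an arbitrary admissible $G$).

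Two further steps you gloss over also fail as stated. First, even once the limit group is known to be an extension of $G$ by $\Lambda((n-1)N_{\tilde G})$, the direct-sum decomposition $K_0\cong G\oplus\Lambda_{G,n}$ is not automatic, since splittings at finite stages need not survive the inductive limit; the paper isolates a subgroup $H$ whose torsion is at most $\mathbb{Z}_2$ and invokes Szele's theorem to split off the torsion. Second, your device for moving the unit to $[k(n-1)^{-1}]$ --- ``a suitable clopen relabeling of the tower defining $Y$'' --- cannot work: for \emph{every} profinite tower the same computation places $[1]_0$ at $0\oplus[(n-1)^{-1}]$ (at stage $m$ one has $[1]_0=L_m[r_{W_m}]_0$ inside $\mathbb{Z}_{L_m(n-1)}[r_{W_m}]_0$, independently of the tower), so no adjustment of $Y$ changes the residue. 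What is needed is to leave this class of systems: the paper's skyscraper construction $X\times\{1,\dots,k\}$, with one generator shifting fibers cyclically, replaces the crossed product by $\mathbb{M}_k\otimes(C(X)\rtimes\mathbb{F}_n)$ and thereby multiplies the class of the unit by $k$. A smaller inaccuracy: pure infiniteness does not follow from an $n$-filling property ``passing to the minimal extension $X$'' (boundary-type properties do not pass to extensions in general); the paper instead gets the Kirchberg property for free because each finite stage is $\mathbb{M}_{L_m}(C(\partial\mathbb{F}_{k_m})\rtimes\mathbb{F}_{k_m})$, a Kirchberg algebra in the UCT class, and both classes are closed under inductive limits --- or, for non-profinite $\alpha$, by exploiting elements of $\ker\alpha$ as in Theorem \ref{Thm:gmi}.
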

As a consequence, we obtain the following result.
\begin{Corint}[Corollary \ref{Cor:Inf}]
Every free group admits continuously many amenable minimal Cantor systems
which are distinguished by the $K$-groups.
\end{Corint}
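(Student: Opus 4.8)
The plan is to fix the rank and let the abelian group $G$ vary. Since Theorem~\ref{Thm:Inf} produces, for every $G$ satisfying its hypotheses (call such a $G$ \emph{admissible}) and every $2\le n<\infty$, an amenable minimal Cantor $\mathbb{F}_n$-system whose $K_0$-group is $G\oplus\Lambda_{G,n}$, it suffices to exhibit continuously many admissible subgroups $G\subseteq\mathbb{Q}^{\oplus\infty}$ for which the resulting $K_0$-groups are pairwise non-isomorphic. Fixing $n$ (say $n=2$) and $k$ (say $k=0$) throughout, this reduces the whole corollary to a purely group-theoretic construction, the analytic content having already been carried by Theorem~\ref{Thm:Inf}.

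First I would observe that the $K_0$-group remembers $G$. Indeed $G$ is torsion-free, being a subgroup of $\mathbb{Q}^{\oplus\infty}$, whereas $\Lambda_{G,n}$ is a torsion group, being a subgroup of $\mathbb{Q}/\mathbb{Z}$; hence the torsion subgroup of $G\oplus\Lambda_{G,n}$ equals $\Lambda_{G,n}$, and the quotient by it is isomorphic to $G$. Consequently any abstract isomorphism $G\oplus\Lambda_{G,n}\cong G'\oplus\Lambda_{G',n}$ descends to an isomorphism $G\cong G'$ of the torsion-free quotients. Thus it is enough to produce continuously many pairwise non-isomorphic admissible groups $G$.

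For this I would take, for each set of primes $S$, the group $G_S:=\mathbb{Z}_S\oplus\mathbb{Z}^{\oplus\infty}\subseteq\mathbb{Q}\oplus\mathbb{Q}^{\oplus\infty}=\mathbb{Q}^{\oplus\infty}$, where $\mathbb{Z}_S\subseteq\mathbb{Q}$ is the group of rationals whose denominators are products of primes in $S$. Taking the reference copy of $\mathbb{Z}^{\oplus\infty}$ inside $G_S$ to be $\mathbb{Z}\oplus\mathbb{Z}^{\oplus\infty}$, the quotient $G_S/\mathbb{Z}^{\oplus\infty}\cong\mathbb{Z}_S/\mathbb{Z}$ is infinite as soon as $S\ne\emptyset$, so every such $G_S$ is admissible. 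To separate them I would use the isomorphism invariant ``the set of primes $p$ admitting a nonzero element of infinite $p$-height'': for $p\in S$ the element $(1,0)$ has infinite $p$-height in $G_S$, whereas for $p\notin S$ no nonzero element is divisible by arbitrarily high powers of $p$, since $\mathbb{Z}^{\oplus\infty}$ is reduced and the $p$-adic valuation is nonnegative, hence bounded below, on nonzero elements of $\mathbb{Z}_S$. This invariant therefore equals $S$, so the continuum many subsets $S$ give continuum many pairwise non-isomorphic admissible $G_S$. Feeding these into Theorem~\ref{Thm:Inf} yields continuum many amenable minimal Cantor $\mathbb{F}_n$-systems distinguished by their $K_0$-groups, for every finite $n\ge 2$.

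Finally the degenerate free groups must be treated apart. For $\mathbb{F}_1=\mathbb{Z}$ every action is amenable, and it is classical (through odometers and the Giordano--Putnam--Skau theory) that minimal Cantor $\mathbb{Z}$-systems realize continuum many pairwise non-isomorphic dimension groups as $K_0$, so the statement holds there as well. I expect the genuine obstacle to be the infinite-rank free group $\mathbb{F}_\infty$: the construction behind Theorem~\ref{Thm:Inf} takes as input the boundary action of a finitely generated free group on its Cantor Gromov boundary, which has no literal $\mathbb{F}_\infty$-analogue, while the two obvious formal manoeuvres both fail---restricting a system from an overgroup destroys minimality, and inflating along a quotient destroys amenability because the kernel is non-amenable. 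Covering $\mathbb{F}_\infty$ thus seems to require running the diagonal-action construction directly, with a suitable amenable minimal Cantor action of $\mathbb{F}_\infty$ (which exists, $\mathbb{F}_\infty$ being exact as a countable direct limit of exact groups) playing the role of the boundary action, and is the step I would expect to demand the most additional work rather than following formally from Theorem~\ref{Thm:Inf}.
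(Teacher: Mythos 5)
Your argument is correct and is essentially the paper's own (implicit) proof: Corollary \ref{Cor:Inf} is stated as an immediate consequence of Theorem \ref{Thm:Inf}, and what you supply --- a continuum of admissible groups $G$ together with the observation that $G$ is recovered from $K_0\cong G\oplus\Lambda_{G,n}$ as the quotient by its torsion subgroup --- is exactly the routine verification the paper leaves to the reader; your family $G_S$ and the infinite-$p$-height invariant carry this out correctly. Note also that under the paper's standing convention $2\le n<\infty$, the corollary quantifies over finitely generated non-abelian free groups, so your core argument already settles the full statement as intended.

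One correction to your closing paragraph: the claim that ``restricting a system from an overgroup destroys minimality'' is false for the subgroups that matter here. The paper's Lemma \ref{Lem:Finf} shows that the restriction of the boundary action $\beta_2$ to the infinite-index subgroup $D(\mathbb{F}_2)\cong\mathbb{F}_\infty$ remains minimal (and amenable, with purely infinite crossed product); minimality survives because $D(\mathbb{F}_2)$ is a nontrivial \emph{normal} subgroup, so the fixed points $\omega_{\pm}(g)$, $g\in D(\mathbb{F}_2)\setminus\{e\}$, form a dense subset of $\partial\mathbb{F}_2$ (proof of Theorem \ref{Thm:gmi}). This restriction trick is precisely how the last theorem of Section \ref{sec:gmi} produces continuum many amenable minimal free Cantor $\mathbb{F}_\infty$-systems with mutually non-isomorphic crossed products, so the $\mathbb{F}_\infty$ case, had it been within the corollary's scope, is considerably easier than you anticipate.
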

We also show similar results for (finitely generated, non-amenable) virtually free groups (Theorem \ref{Thm:Infv}).
Then, as a consequence of these results, we obtain the following decomposition theorem.
\begin{Corint}[Corollary \ref{Cor:Dec}]
For a torsion free abelian group $G$ of infinite rank,
consider a Kirchberg algebra $A$ in the UCT class satisfying
$(K_0(A), [1]_0, K_1(A))\cong(G\oplus\mathbb{Q}/\mathbb{Z}, 0, \mathbb{Z}^{\oplus\infty})$.
Then for any virtually free group $\Gamma$, $A$ is decomposed
as the crossed product of an amenable minimal topologically free Cantor $\Gamma$-system.
\end{Corint}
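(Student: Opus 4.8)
The plan is to deduce the statement from Theorem~\ref{Thm:Inf} (in the case $\Gamma=\mathbb{F}_n$) and Theorem~\ref{Thm:Infv} (for general virtually free $\Gamma$) via the Kirchberg--Phillips classification theorem. Since a unital Kirchberg algebra in the UCT class is determined up to isomorphism by the triple $(K_0,[1]_0,K_1)$, it is enough to construct, for the prescribed $\Gamma$, an amenable minimal topologically free Cantor $\Gamma$-system whose crossed product is a unital Kirchberg algebra in the UCT class with invariants $(G\oplus\mathbb{Q}/\mathbb{Z},\,0,\,\mathbb{Z}^{\oplus\infty})$; classification then identifies this crossed product with $A$.

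To produce such a system I would feed a suitable group into Theorem~\ref{Thm:Inf}. That theorem yields, for every subgroup $H\leq\mathbb{Q}^{\oplus\infty}$ containing the standard $\mathbb{Z}^{\oplus\infty}$ with infinite index and every integer $k$, an amenable minimal Cantor $\mathbb{F}_n$-system with $K_0\cong H\oplus\Lambda_{H,n}$, with $[1]_0=0\oplus[k(n-1)^{-1}]$, with $K_1\cong\mathbb{Z}^{\oplus\infty}$, and with crossed product a Kirchberg algebra in the UCT class (Theorem~\ref{Thm:Infv} supplies the analogue for virtually free $\Gamma$). Taking $k=0$ makes $[1]_0=0$, and $K_1$ already matches the target, so the problem reduces to choosing $H$ so that $H\cong G$ as abstract groups and $\Lambda_{H,n}=\mathbb{Q}/\mathbb{Z}$.

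This group-theoretic realization is the crux. As $A$ is separable, $G$ is countable; being torsion free of infinite rank it embeds into $G\otimes\mathbb{Q}\cong\mathbb{Q}^{\oplus\infty}$. I would fix a maximal $\mathbb{Q}$-independent family $\{x_i\}_{i\geq1}$ in $G$ and put $F:=\langle\, i!\,x_i : i\geq1\,\rangle$. Then $F$ is free of full rank, so $F\cong\mathbb{Z}^{\oplus\infty}$ and $[G:F]=\infty$, while $G/F$ contains $\bigoplus_i\mathbb{Z}/i!\mathbb{Z}$ and hence has an element of every order $d$. Identifying a basis of $F$ with the standard basis exhibits $G$ as a subgroup $H\leq\mathbb{Q}^{\oplus\infty}$ with $\mathbb{Z}^{\oplus\infty}\leq H$ of infinite index, $H\cong G$, and $H/\mathbb{Z}^{\oplus\infty}\cong G/F$. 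For any $x\in\mathbb{Q}/\mathbb{Z}$ of order $d$, the quotient $H/\mathbb{Z}^{\oplus\infty}$ has a cyclic subgroup $K$ of order $d$ and $\mathrm{ord}(x)=d\mid(n-1)d=(n-1)|K|$, so $x\in\Lambda_{H,n}$; thus $\Lambda_{H,n}=\mathbb{Q}/\mathbb{Z}$. The main obstacle is precisely this step: forcing the torsion of the quotient to be all of $\mathbb{Q}/\mathbb{Z}$ for an arbitrary $G$ whose divisibility may be complicated, which the factorial multiples resolve uniformly.

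It remains to note that the resulting system is topologically free, as required. The systems of Theorems~\ref{Thm:Inf} and~\ref{Thm:Infv} are diagonal actions on a product $\partial\Gamma\times Y$ with the boundary action as an equivariant factor. For $\Gamma=\mathbb{F}_n$ the boundary action is topologically free --- each nontrivial element fixes only finitely many boundary points --- and since the projection $\partial\Gamma\times Y\to\partial\Gamma$ is open and equivariant, an open set of $\gamma$-fixed points in the product would map onto an open set of $\gamma$-fixed points in $\partial\Gamma$, which is impossible; hence the diagonal system is topologically free. For general virtually free $\Gamma$ the boundary action may fail to be topologically free (for instance a central involution acts trivially on the boundary), so topological freeness is instead ensured by the Cantor factor $Y$ in the construction of Theorem~\ref{Thm:Infv}. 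Combining these, the constructed system is an amenable minimal topologically free Cantor $\Gamma$-system whose crossed product is a unital Kirchberg algebra in the UCT class with invariants $(G\oplus\mathbb{Q}/\mathbb{Z},0,\mathbb{Z}^{\oplus\infty})$, so by Kirchberg--Phillips it is isomorphic to $A$, yielding the desired decomposition.
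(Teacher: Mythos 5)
Your proposal is correct and follows essentially the same route as the paper: the paper likewise reduces, via Theorems \ref{Thm:Inf} and \ref{Thm:Infv} and Kirchberg--Phillips, to embedding $G$ into $\mathbb{Q}^{\oplus\infty}$ so that the image contains $\mathbb{Z}^{\oplus\infty}$ with quotient of supernatural number $\prod_{p\in\mathcal{P}}p^{\infty}$, and realizes this by sending a maximal linearly independent sequence $(x_n)_n$ to $n^{-1}e_n$ (your scaling by $i!$ rather than $i$ is an immaterial variant of the same idea). Your additional explicit checks (countability of $G$ from separability of $A$, and topological freeness of the diagonal system) are points the paper handles implicitly via its preliminaries and the statement of Theorem \ref{Thm:Infv}.
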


We also see that even we restrict our attention to the free Cantor systems,
we still obtain the existence of continuously many amenable minimal Cantor systems.
\begin{Thmint}[Theorem \ref{Thm:free}]
Let $\Gamma$ be a virtually free group.
Then there exist continuously many amenable minimal free Cantor $\Gamma$-systems
whose crossed products are mutually non-isomorphic Kirchberg algebras in the UCT class.
\end{Thmint}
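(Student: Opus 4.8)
The plan is to produce the desired systems as diagonal actions of $\Gamma$ on $\partial\Gamma\times Y$, where $\partial\Gamma$ carries the boundary action of the (virtually free, hence hyperbolic) group $\Gamma$ and $Y$ is a free minimal Cantor $\Gamma$-system drawn from a continuum-indexed family. The key observation is that freeness of the product action is governed by $Y$ alone: if $g(\xi,y)=(\xi,y)$ then $gy=y$, so the diagonal action is free whenever the action on $Y$ is, irrespective of the (non-free) boundary factor. This is exactly the flexibility missing from the merely topologically free systems underlying Theorems \ref{Thm:Inf} and \ref{Thm:Infv}. Amenability is then automatic: the equivariant projection $\partial\Gamma\times Y\to\partial\Gamma$ onto an amenable $\Gamma$-space forces $\partial\Gamma\times Y$ to be amenable as well.

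Next I would verify that $C(\partial\Gamma\times Y)\rtimes\Gamma$ is a Kirchberg algebra in the UCT class. Nuclearity and the UCT follow from amenability of the diagonal action together with the totally disconnected coefficient algebra, via the groupoid results applicable to amenable actions of virtually free groups. Simplicity I would obtain by combining minimality of the diagonal action with its topological freeness, the latter being immediate from freeness of $Y$. Pure infiniteness I would again extract from the boundary factor: the strongly proximal, contracting dynamics of $\partial\Gamma$ persists in the product and makes the system locally contracting, so that the simple crossed product is purely infinite. The one genuinely non-formal point here is minimality of $\partial\Gamma\times Y$, which is not automatic; I would secure it by the same mixing property of the boundary action exploited in the proofs of Theorems \ref{Thm:Inf} and \ref{Thm:Infv}, guaranteeing that the diagonal with any minimal $Y$ of the relevant type is again minimal.

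To separate continuum-many isomorphism classes I would let $Y$ range over a continuum-indexed family of free minimal Cantor $\Gamma$-systems (for instance profinite, odometer-type actions furnished by the residual finiteness of $\Gamma$) and apply the $K$-theory computation technique of this paper, which expresses $K_*\bigl(C(\partial\Gamma\times Y)\rtimes\Gamma\bigr)$ in terms of data attached to $Y$, just as in Theorems \ref{Thm:Inf} and \ref{Thm:Infv}. It then suffices to arrange that these invariants realize continuum-many pairwise non-isomorphic abelian groups; this can be done along the lines of Corollary \ref{Cor:Inf}, where subgroups of $\mathbb{Q}^{\oplus\infty}$ already supply a continuum of non-isomorphic $K$-groups. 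Since Kirchberg algebras in the UCT class are classified by their $K$-theory, distinct $K$-theory yields non-isomorphic crossed products, completing the argument.

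The principal obstacle is reconciling the freeness requirement with the $K$-theoretic variability. Unlike Theorem \ref{Thm:Inf}, here I need not realize prescribed $K$-groups, only continuum-many distinct ones, which relaxes the task; but I must nonetheless exhibit free minimal Cantor $\Gamma$-systems $Y$ whose diagonal products with $\partial\Gamma$ remain minimal and whose associated $K$-theory still ranges over a continuum. Establishing minimality of these particular diagonal products, and checking that the $K$-theory computation goes through for free $Y$, is where the real work concentrates; by contrast the nuclearity, UCT, and pure infiniteness parts I expect to transfer with essentially no change from the non-free setting.
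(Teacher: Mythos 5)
Your skeleton coincides with the paper's: the systems are diagonal actions $\beta\times\alpha$ with a free second factor, freeness is inherited from that factor alone, amenability comes from the boundary factor, minimality and pure infiniteness come from the contracting boundary dynamics (this is exactly Theorem \ref{Thm:gmi}, applicable because profinite systems are projective limits of non-faithful minimal systems), and continuum many classes are separated by $K$-theory. The genuine gap sits precisely at the step you yourself flag as ``where the real work concentrates'': exhibiting continuum many \emph{free} minimal Cantor systems $Y$ whose diagonal products have computable, pairwise distinct $K$-theory. Both families you point to fail freeness. Odometer-type actions, i.e.\ pullbacks of odometers through $\mathbb{F}_n\to\mathbb{Z}$ as in Section \ref{sec:odo}, are not free: every nontrivial element of $D(\mathbb{F}_n)$ acts trivially on such a $Y$ and fixes its attracting point in $\partial\mathbb{F}_n$, so $\beta\times Y$ is only topologically free. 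Likewise, the continuum family behind Corollary \ref{Cor:Inf} is built via Lemmas \ref{Lem:Ind} and \ref{Lem:Conn} from chains of finite-index subgroups $\Gamma_m$ that are neither normal in $\mathbb{F}_n$ (normality is not transitive, and the kernels used there are twisted by automorphisms) nor arranged to satisfy $\bigcap_m\Gamma_m=\{e\}$, so nothing there yields free actions; relaxing ``prescribed $K$-groups'' to ``continuum many distinct ones'' does not remove this obstruction, and plain residual finiteness gives trivial intersection but no control on the indices, hence none on the $K$-theory.

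The missing construction, which is the actual content of the paper's proof, is: for a nonempty set $Q$ of primes choose finite subsets $F_m$ increasing to $Q$, put $q_m=\prod_{p\in F_m}p$, and define inductively $\Gamma_m:=\ker\bigl(\Gamma_{m-1}\to\Gamma_{m-1}^{\rm ab}/q_m\Gamma_{m-1}^{\rm ab}\bigr)$. Each $\Gamma_m$ is \emph{characteristic} in $\Gamma_{m-1}$, hence normal in $\mathbb{F}_n$, its index is a power of $q_m$, and Levi's theorem gives $\bigcap_m\Gamma_m=\{e\}$; thus the profinite system $\alpha_Q$ is free, and $\gamma_Q=\beta\times\alpha_Q$ is free, amenable, minimal with Kirchberg UCT crossed product. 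The separating invariant is then the \emph{torsion} part of $K_0$, not the free part: the computation of Theorem \ref{Thm:Inf} yields $K_0^{\rm tor}\cong\Lambda\bigl((n-1)\prod_{q\in Q}q^{\infty}\bigr)$, which recovers $Q$, giving continuum many isomorphism classes. Note also that the paper does not run the argument directly on $\partial\Gamma$ for virtually free $\Gamma$ as you propose (Theorem \ref{Thm:gmi} is proved only for $\mathbb{F}_n$, and its minimality and pure infiniteness arguments would need re-examination for hyperbolic groups with torsion); instead it transports the free-group systems to $\Gamma$ by the induction/Green imprimitivity construction of Theorem \ref{Thm:Infv}, which preserves freeness, minimality, amenability, and the $K_0$-group up to matrix amplification.
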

In the proof of Theorem \ref{Thmint:Inf},
techniques of the computation of $K$-theory are developed for certain Cantor systems.
In Section \ref{sec:odo}, we give computations of the $K$-theory for the diagonal actions of the boundary actions and the products of the odometer transformations.
From our computations, their topological full groups, continuous orbit equivalence classes,
and strong orbit equivalence classes are classified.
Here we collect the classification results.
First we present the Cantor systems which we will classify more precisely.
For each free group $\mathbb{F}_n$, fix an enumeration $\{s_1, \ldots, s_n\}$ of the canonical generators.
For $2\leq n<\infty$, $1\leq k\leq n$,
and a sequence $N_1, \ldots, N_k$ of infinite supernatural numbers,
define a Cantor $\mathbb{F}_n$-system by
\[\gamma_{N_1,\ldots, N_k}^{(n)}:=\beta_n\times\left(\prod_{j=1}^k\alpha_{N_j}\circ \pi_j^{(n)}\right),\]
where $\beta_n$ denotes the boundary action of $\mathbb{F}_n$,
$\alpha_{N}$ denotes the odometer transformation of type $N$, and
$\pi_j^{(n)}$ denotes the homomorphism
$\mathbb{F}_n\rightarrow \mathbb{Z}$
given by $s_j\mapsto 1$ and $s_i \mapsto 0$ for $i\neq j$.
\begin{Thmint}[Proposition \ref{Prop:grp} and Theorem \ref{Thm:Cla}]
For two Cantor systems $\gamma_1:=\gamma_{N_1,\ldots, N_k}^{(n)}$ and
$\gamma_2:=\gamma_{M_1,\ldots, M_l}^{(m)}$ defined as above, the following conditions are equivalent.
\begin{enumerate}[\upshape (1)]
\item They are strongly orbit equivalent.
\item They are continuously orbit equivalent.
\item Their topological full groups are isomorphic.
\item The commutator subgroups of their topological full groups are isomorphic.
\item Their crossed products are isomorphic.
\item Their $K_0$-invariants $(K_0, [1]_0)$
are isomorphic.
\item The equations $k=l$ and $n=m$ hold and there exist
a permutation $\sigma\in\mathfrak{S}_k$ and sequences
$(n_1, \ldots, n_k)$ and $(m_1, \ldots, m_k)$ of natural numbers
that satisfy $\prod_{j=1}^k n_j=\prod_{j=1}^k m_j$ and $n_iN_i=m_iM_{\sigma(i)}$.
\end{enumerate}
\end{Thmint}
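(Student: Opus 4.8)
The plan is to prove the seven conditions equivalent by closing a single cycle, isolating the two genuinely computational steps from a collection of soft implications supplied by the rigidity theory of minimal Cantor groupoids. Each $\gamma_{N_1,\ldots,N_k}^{(n)}$ is a minimal topologically free Cantor system, so its transformation groupoid is a minimal essentially principal \'etale groupoid on the Cantor set. For such groupoids, the continuous orbit equivalence of the two systems, the isomorphism of their transformation groupoids, the isomorphism of their topological full groups, and the isomorphism of the (simple) commutator subgroups of those full groups are all equivalent, by the reconstruction theorems of Matui together with the groupoid picture of continuous orbit equivalence due to Li and Renault; this yields $(2)\Leftrightarrow(3)\Leftrightarrow(4)$. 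An isomorphism of the transformation groupoids induces an isomorphism of the reduced groupoid $\Cs$-algebras, that is, of the reduced crossed products, giving $(2)\Rightarrow(5)$, and $(5)\Rightarrow(6)$ is the functoriality of $K$-theory together with preservation of the class of the unit. Finally $(2)\Rightarrow(1)$ is immediate, since a continuous orbit equivalence is in particular a strong orbit equivalence, and $(1)\Rightarrow(6)$ follows as in the Giordano--Putnam--Skau theory: a strong orbit equivalence induces an order isomorphism of the unital $K_0$-groups. The cycle therefore reduces to the two implications $(6)\Rightarrow(7)$ and $(7)\Rightarrow(2)$.

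For $(7)\Rightarrow(2)$ I would give an explicit construction. The boundary factor $\beta_n$ is common to both systems once $n=m$, so after applying the permutation $\sigma$ the task is to match the two products of odometers factor by factor. The relation $n_iN_i=m_iM_{\sigma(i)}$ says exactly that $\alpha_{N_i}$ and $\alpha_{M_{\sigma(i)}}$ have commensurable supernatural type, so each pair embeds in a common odometer and is identified by an explicit adding-machine map realizing a continuous orbit equivalence; the constraint $\prod_{j=1}^k n_j=\prod_{j=1}^k m_j$ is precisely what makes these factorwise identifications compatible with the normalization coming from the order unit. Splicing them with the identity on the boundary produces a continuous orbit equivalence between $\gamma_1$ and $\gamma_2$, which by the soft implications above already delivers $(1)$, $(3)$, $(4)$, $(5)$, and $(6)$.

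The main obstacle is $(6)\Rightarrow(7)$: reconstructing the combinatorial data from the abstract pair $(K_0,[1]_0)$. Here one must use the explicit computation of $K_0$ carried out above, in which the group is assembled from a torsion-free part built from the odometer dimension groups $\mathbb{Z}[1/N_j]$ and a torsion part whose exponents are governed by $n-1$ and by the supernatural types. The difficulty is that these contributions are entangled, so the individual $N_j$ cannot be read off directly. My plan is to recover the data through canonical subquotients: first extract the integer $n$ from the annihilator exponents of the torsion subgroup (which carry the factor $n-1$), then recover $k$ as the rank of the torsion-free part, and finally recover the multiset of Baer types $\{[N_1],\ldots,[N_k]\}$ from the divisibility structure of that part. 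By Baer's classification of rank-one torsion-free groups, two such parts are isomorphic exactly when the supernatural numbers agree up to a finite factor, which is the relation $n_iN_i=m_iM_{\sigma(i)}$; tracking the distinguished order unit $[1]_0$ across the isomorphism then pins down the finite ambiguities left free by Baer's theorem and forces the normalizing identity $\prod_{j=1}^k n_j=\prod_{j=1}^k m_j$. Verifying that the order unit really does rigidify these ambiguities, rather than leaving slack, is the technical heart of the argument and the step I expect to require the most care.
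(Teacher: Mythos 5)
Your overall architecture coincides with the paper's (soft implications from Matui's theorem and $K$-theory, reducing everything to $(6)\Rightarrow(7)$ and $(7)\Rightarrow(2)$), but your execution of $(7)\Rightarrow(2)$ contains a genuine gap: the factor-by-factor identification you propose does not exist. Two $\mathbb{Z}$-odometers with commensurable but distinct types are never continuously orbit equivalent: such an equivalence would give an isomorphism of their crossed products, whose invariants $(K_0,[1]_0)\cong(\Upsilon(N),1)$ distinguish them --- e.g.\ $(\Upsilon(2\cdot 3^\infty),1)\not\cong(\Upsilon(3^\infty),1)$ as pointed groups, even though $1\cdot(2\cdot 3^\infty)=2\cdot 3^\infty$. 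Worse, condition (7) allows a finite factor to migrate between \emph{different} coordinates (the elementary case is $N_1=lL_1,\ N_j=L_j$ for $j\neq 1$ versus $M_2=lL_2,\ M_j=L_j$ for $j\neq 2$), which no factorwise matching can see. The paper's proof of $(6)\Rightarrow(2)$ handles exactly this: after reducing by transitivity to such an elementary transfer, it passes to the index-$l$ subgroups $\Lambda_i=\ker(q\circ\pi_i^{(n)})$, which are free of rank $r=l(n-1)+1$, shows that $\gamma_i$ is continuously orbit equivalent to the product system $\tilde{\gamma_i}\boxtimes\lambda$ of the \emph{different} group $\Lambda_i\times\mathbb{Z}_l$ (continuous orbit equivalence does not fix the acting group, and this flexibility is the whole point), and then verifies that both $\tilde{\gamma_1}$ and $\tilde{\gamma_2}$ are conjugate to the single system $\gamma^{(r)}_{L_1,\ldots,L_k}$. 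In particular the boundary factor is not a spectator to be "spliced with the identity": restricted to $\Lambda_i$ it becomes the boundary action of $\mathbb{F}_r$, a free group of larger rank. Your plan, which keeps the group fixed at $\mathbb{F}_n$ and the boundary untouched, cannot be completed.

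Your sketch of $(6)\Rightarrow(7)$ also crosses the roles of the torsion subgroup and the order unit, in ways that fail on examples. You cannot extract $n$ from "annihilator exponents of the torsion subgroup": that subgroup is $\Lambda((n-1)N_1\cdots N_k)$, and the factor $n-1$ is invisible whenever its prime divisors occur with infinite exponent in some $N_i$ (take $n-1=2$ and $N_1=2^\infty$). The paper recovers $n$ from the order of the unit $[1]_0=0\oplus 0\oplus[(n-1)^{-1}]$, which is exactly $n-1$. Dually, the identity $\prod_j n_j=\prod_j m_j$ is not "forced by the order unit"; it comes from the torsion subgroup, which determines the supernatural number $(n-1)N_1\cdots N_k$ on the nose (as the lcm of the orders of its elements), combined with the factorwise commensurability supplied by Baer's theorem. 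Finally, $k$ is not the rank of the torsion-free part --- that rank is infinite because of the $\mathbb{Z}^{\oplus\infty}$ summand --- but the rank of the canonical subgroup of $K_0/K_0^{\rm tor}$ generated by the elements divisible by infinitely many integers, which is how the paper isolates $\bigoplus_{i=1}^k\Upsilon(N_i)$ before applying Baer's theorem. These latter points are repairable with the ingredients you already name; the failure of the factorwise construction in $(7)\Rightarrow(2)$ is not.
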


\section{Preliminaries}
In this section, we collect the fundamental knowledges and notation used in this paper.
The basic references are the book \cite{BO} of N. Brown and N. Ozawa and the book \cite{Ror} of M. R\o rdam.
\subsection{Minimality of dynamical systems}
The minimality of topological dynamical systems is the indecomposability condition of topological dynamical systems.
It is regarded as a topological analogue of ergodicity.
Hence it is natural and important to study minimal dynamical systems in particular.
Here we recall the definition of minimal dynamical system.
\begin{Def}
Let $\Gamma$ be a group, $X$ be a topological space,
and $\alpha \colon \Gamma\curvearrowright X$ be a topological dynamical system of $\Gamma$ on $X$.
The dynamical system $\alpha$ is said to be minimal if
every orbit of $\alpha$ is dense in $X$.
\end{Def}
It is clear from the definition that $\alpha$ is minimal
if and only if there is no nontrivial $\Gamma$-invariant open/closed subset of $X$.

Minimality with a certain additional assumption makes the reduced crossed product simple.
For example, Archbold and Spielberg \cite[page 122, Corollary]{AS} have shown that this is the case if the action is topologically free.
Recall that a topological dynamical system $\alpha\colon\Gamma\curvearrowright X$ is said to be topologically free
if for any element $g\in\Gamma\setminus\{e\}$,
the subset $\{x\in X: \alpha(g)(x)\neq x\}$ is dense in $X$.

\subsection{Amenability of dynamical systems}
The (topological) amenability of dynamical systems on a compact space
is a dynamical analogue of the amenability of discrete groups.
First we review the definition of topological amenability.
For the definition, we need the space
${\rm Prob}(\Gamma)$, which is the space of all probability measures on $\Gamma$
with the pointwise convergence topology. On ${\rm Prob}(\Gamma)$, $\Gamma$ acts from the left
by $s.\mu(t):=\mu(s^{-1}t)$ for $s, t\in\Gamma$ and $\mu\in{\rm Prob}(\Gamma)$.
\begin{Def}
A dynamical system $\alpha$ of a group $\Gamma$
on a compact space $X$ is said to be amenable if there is a sequence
$(\mu_n)_n$ of continuous
maps
\[\mu_n\colon x\in X\mapsto \mu_n^x\in {\rm Prob}(\Gamma)\]
such that for all $s\in\Gamma$,
$\sup_{x\in X}(\|s.\mu_n^x-\mu_n^{s.x}\|_1)$ converges to zero as $n$ tends to infinity.
\end{Def}
Note that by definition, if a quotient of a dynamical system is amenable,
then the original dynamical system itself is amenable.
From this, even in the category of amenable minimal dynamical systems on metrizable compact spaces,
the Cantor set has the ``universal'' property in the same sense as described in the first paragraph of Section \ref{ss:Int}.

Amenable dynamical systems arise naturally in many situations.
Here we review a few examples of amenable dynamical systems.
\begin{Exms}[See \cite{BO}]

\begin{itemize}
\item Any dynamical system of an amenable group is amenable.
\item The boundary action of a hyperbolic group is amenable.
\item For a second countable locally compact group $G$, a discrete subgroup $\Gamma$, and a closed cocompact amenable subgroup $\Lambda$, the left multiplication action of $\Gamma$ on $G/\Lambda$ is amenable.
\item The left translation action of $\Gamma$ on its Stone-\v{C}ech compactification $\beta\Gamma$
is amenable if and only if $\Gamma$ is exact.
\end{itemize}
\end{Exms}
Next we review some basic and important properties of amenable dynamical systems.
For an amenable dynamical system and
for any non-amenable subgroup $\Lambda$ of the acting group, there is no $\Lambda$-invariant probability measure.
It is easy to check that any amenable minimal dynamical system of $\mathbb{F}_n$ must be topologically free.
The most important feature of amenability for us is that it ensures that
the crossed product has nice properties.
For example, the reduced crossed product of an amenable dynamical system is nuclear
(in fact this characterize the amenability) \cite{Ana0}, satisfies the UCT \cite{Tu},
and coincides with the full crossed product \cite{Ana0}.
Thus to study dynamical systems of discrete groups on compact spaces by a \Cs -algebraic way, it is natural to restrict our attention to amenable dynamical systems.
For more information about applications and examples of amenable dynamical systems, we refer the reader to the survey paper \cite{Oz} of N. Ozawa and the references therein.

\subsection{Gromov boundary}
For a pair of a finitely generated group $\Gamma$ and a finite generating set $S$ of $\Gamma$, we equip a geodesic left-invariant metric $d_S$ on $\Gamma$ by $d_S(g, h):=|g^{-1}h|_S$ where $|\cdot |_S$ is the length function on $\Gamma$ determined by $S$.
The quasi-isometric class of $d_S$ is independent of the choice of $S$,
hence any quasi-isometric invariant property of (geodesic) metric spaces defines a property of finitely generated groups.
The hyperbolicity is one such property.
For each hyperbolic group $\Gamma$,
there is a canonical boundary $\partial\Gamma$, called the Gromov boundary,
which is a metrizable compact Hausdorff space.
Roughly speaking, $\partial\Gamma$ is the space of all equivalence
classes of infinite geodesic rays in $\Gamma$.
Every hyperbolic group has the canonical action on its Gromov boundary, called the boundary action.
We denote the boundary action by $\beta$.
The boundary actions are known to be amenable.
Here we only recall the following fundamental fact.
For the details, we refer the reader to \cite[Section 5.3]{BO} and the survey paper \cite{KB}.
\begin{Prop}
Let $\Gamma$ and $\Lambda$ be hyperbolic groups.
Let $f\colon \Gamma\rightarrow \Lambda$ be a group inclusion whose image is a finite index subgroup of $\Lambda$.
Then $f$ induces a $\Gamma$-equivariant homeomorphism
$\partial f$ from $\partial\Gamma$ onto $\partial\Lambda$.
\end{Prop}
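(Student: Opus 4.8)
The plan is to deduce the statement from the general principle that a quasi-isometry between geodesic hyperbolic spaces induces a homeomorphism of their Gromov boundaries (see \cite[Section 5.3]{BO} and \cite{KB}). First I would fix finite generating sets $S$ of $\Gamma$ and $T$ of $\Lambda$ and regard $f$ as a map between the metric spaces $(\Gamma, d_S)$ and $(\Lambda, d_T)$. Since $f$ is an injective homomorphism, for $g, h \in \Gamma$ one has $d_T(f(g), f(h)) = |f(g^{-1}h)|_T$, and comparing this with $d_S(g,h) = |g^{-1}h|_S$ shows that $f$ is a quasi-isometric embedding: the two length functions on $\Gamma$ are Lipschitz equivalent, since each element of $S$ has bounded $T$-length while conversely the $S$-length of an element of $\Gamma$ is controlled by its $T$-length. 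Because $f(\Gamma)$ has finite index in $\Lambda$, choosing a finite set of coset representatives shows that every element of $\Lambda$ lies within a uniformly bounded $d_T$-distance of $f(\Gamma)$; hence $f$ is coarsely surjective. Therefore $f$ is a quasi-isometry.

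Next I would construct the boundary map. Realising $\partial\Gamma$ as the set of equivalence classes of geodesic rays issuing from $e$, I would send such a ray $\xi$ to the point of $\partial\Lambda$ determined by the image path $f\circ\xi$. This image is a quasi-geodesic ray in $\Lambda$, so by the stability of quasi-geodesics in hyperbolic spaces (the Morse lemma) it lies within bounded Hausdorff distance of a genuine geodesic ray, whose endpoint I declare to be $\partial f(\xi)$. The standard verifications then apply: the point is independent of the chosen representative, the resulting map $\partial f \colon \partial\Gamma \to \partial\Lambda$ is continuous, and a quasi-inverse of $f$ (which exists precisely because $f$ is a quasi-isometry) induces a continuous inverse of $\partial f$. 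This yields the homeomorphism.

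Finally I would check $\Gamma$-equivariance, where $\Gamma$ acts on $\partial\Lambda$ through $f$ by left translation. Since $f$ is a homomorphism, left translation by $g$ in $\Gamma$ is carried by $f$ to left translation by $f(g)$ in $\Lambda$, so at the level of rays $f\circ(g.\xi) = f(g).(f\circ\xi)$. Passing to boundaries gives $\partial f(g.\eta) = f(g).\partial f(\eta)$ for all $g \in \Gamma$ and $\eta \in \partial\Gamma$, which is the required equivariance.

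The analytic heart of the argument, and the only genuinely nontrivial step, is the stability of quasi-geodesics underlying the boundary extension; the quasi-isometry estimates and the equivariance are then formal consequences of $f$ being a finite-index inclusion of groups. Since this boundary-extension principle is standard, I would simply invoke it from \cite{BO} and \cite{KB} rather than reprove it.
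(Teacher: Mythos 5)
Your proposal is correct and takes essentially the same route as the paper: the paper does not prove this proposition at all, but recalls it as a standard fact with a pointer to \cite[Section 5.3]{BO} and \cite{KB}, and your argument (finite-index inclusion is a quasi-isometry by coset representatives and undistortion, quasi-isometries of hyperbolic groups extend to boundary homeomorphisms via stability of quasi-geodesics, and equivariance is formal) is exactly the standard reasoning behind that citation. The only point worth tightening is the lower Lipschitz bound $|g|_S \lesssim |f(g)|_T$, which is not a formality about length functions but is itself the undistortion of finite-index subgroups (e.g.\ via the Milnor--\v{S}varc lemma); since you defer the analytic core to the same references the paper cites, this is a matter of exposition rather than a gap.
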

A typical example of a hyperbolic group is the free group $\mathbb{F}_n$.
For the free group $\mathbb{F}_n$,
its Gromov boundary $\partial\mathbb{F}_n$ has a simple description.
This is because the Cayley graph of $\mathbb{F}_n$ with respect to the canonical generating set
is a tree.
Here we give it as a definition of the Gromov boundary of $\mathbb{F}_n$.
\begin{Def}
Let $S$ be the set of canonical generators of $\mathbb{F}_n$ and
set $\widetilde{S}:=S\sqcup S^{-1}.$
Define the subspace $\partial\mathbb{F}_n$ of $\prod_\mathbb{N}\widetilde{S}$ by
\[\partial\mathbb{F}_n:=\left\{(s_m)_{m\in\mathbb{N}}\in\prod_\mathbb{N}\widetilde{S}:
s_{m+1}\neq s_m^{-1} {\rm \ for\ all\ } m\in\mathbb{N}\right\}.\]
We equip $\partial\mathbb{F}_n$ with the topology induced from the product topology.
\end{Def}
It is easy to check that $\partial\mathbb{F}_n$ satisfies the four properties
which characterize the Cantor set, hence $\partial\mathbb{F}_n$ is homeomorphic to the Cantor set.

Each element of $\partial\mathbb{F}_n$ is regarded as a (one-sided) infinite reduced word of the free basis $S$.
For an element $w$ of $\mathbb{F}_n$ or $\partial\mathbb{F}_n$ with the reduced word $w=s_1\cdots s_k\cdots$,
the elements $s_1\cdots s_k$ and $s_k$ are referred to as the first $k$th segment of $w$ and
the $k$th alphabet of $w$, respectively.
For $w\in\mathbb{F}_n$ and $k\leq |w|=m$, the element 
$s_{m-k+1}\cdots s_m$ is referred to as the last $k$th segment of $w$.

For $z\in\mathbb{F}_n$ and $w\in\partial\mathbb{F}_n$, we define the product $z\cdot w$
by the same rule as that of the product of two elements of $\mathbb{F}_n$.
This is the boundary action of $\mathbb{F}_n$.
We denote the boundary action of $\mathbb{F}_n$ by $\beta_n$,
or simply by $\beta$ if the rank $n$ is obvious from the context.

As well the elements of free groups, for any other free basis $T$ of $\mathbb{F}_n$,
every element $w$ of $\partial\mathbb{F}_n$ can be expanded uniquely as an infinite reduced word of the free basis $T$.
This enables us to identify the boundary space $\partial\mathbb{F}_n$ with the space
\[\left\{(t_m)_{m\in\mathbb{N}}\in\prod_\mathbb{N}\widetilde{T}:
t_{m+1}\neq t_m^{-1} {\rm \ for\ all\ } m \in \mathbb{N}\right\},\]
where $\widetilde{T}:=T\sqcup T^{-1}$,
for any free basis $T$ of $\mathbb{F}_n$.
We always identify these spaces in this way without saying.

For a free basis $T$ of $\mathbb{F}_n$ and $t\in\widetilde{T}$,
we define the clopen subset $\Omega(t; T)$ of $\partial\mathbb{F}_n$
to be the subspace of all infinite reduced words whose first alphabet is $t$ in the expansion with respect to the free basis $T$.
When the free basis $T$ is obvious from the context,
we simply denote it by $\Omega(t)$.
More generally, for a free basis $W$ of a finite index subgroup $\Gamma$ of $\mathbb{F}_n$
and $w\in W\sqcup W^{-1}$,
we define the clopen subset $\Theta(w; W)$ of $\partial\mathbb{F}_n$
to be the image of $\Omega(w; W)(\subset\partial\Gamma)$ under the homeomorphism
$\partial\Gamma\cong\partial\mathbb{F}_n$ induced from the inclusion map.
If we need to refer the entire group $\Lambda=\mathbb{F}_n$,
we further denote it by $\Theta(w; W; \Lambda)$.

\subsection{Kirchberg algebras and UCT class}
A \Cs -algebra is said to be a Kirchberg algebra
if it is simple, separable, nuclear and purely infinite.
In this paper, we only deal the unital \Cs -algebras.
A \Cs -algebra is said to be in the UCT class if it satisfies
the universal coefficient theorem of Rosenberg--Schochet \cite{RSc}.

The celebrated theorem of Kirchberg \cite{Kir} and Phillips \cite{Phi} shows that two (unital) Kirchberg algebras $A_1$ and $A_2$ in the UCT class
are isomorphic if and only if the $K$-theoretic invariants $(K_0(A_i), [1]_0, K_1(A_i))$
are isomorphic. Namely, the isomorphism classes of Kirchberg algebras in the UCT class are completely determined
by their $K$-theory.
Typical examples of Kirchberg algebras in the UCT class are Cuntz algebras $\mathcal{O}_n$ ($2\leq n\leq\infty$) and
Cuntz--Krieger algebras $\mathcal{O}_A$ where $A$ is an irreducible $\{0, 1\}$-valued finite matrix that is not a permutation matrix.
It is known that for any countable abelian groups $G_0$, $G_1$
and $u\in G_0$, there is a Kirchberg algebra $A$ in the UCT class such that
$(K_0(A), [1]_0, K_1(A))$ is isomorphic to $(G_0, u, G_1)$ \cite[Theorem 3.6]{Rord}.

Kirchberg algebras in the UCT class naturally arise in many constructions of \Cs -algebras.
For example, certain graphs (see e.g., \cite{Rae}) and certain topological dynamical systems (see e.g., \cite{Ana}, \cite{LaS}, \cite{RoS}, \cite{RS}, and \cite{Spi}) provide Kirchberg algebras in the UCT class.
(However, in the most cases, the computations of the $K$-theory of the crossed products of dynamical systems
considered in these papers are hard and are still not known.
They are possible for the boundary actions of free products of finitely many cyclic groups \cite{Spi}.
See also \cite[Section 3]{LaS} and \cite{RoS} for computation results.)
A notable theorem of T. Katsura \cite{Kat} states that all Kirchberg algebras in the UCT class are realized as
a \Cs -algebra associated to a topological graph.
See also \cite{Kat2} for an explicit description of the Kirchberg algebras in the UCT class.
For precise information on the Kirchberg algebras, we refer the reader to the book \cite{Ror} of R\o rdam.

\subsection{Projective limit}
For a sequence $(X_n, \pi_n)_n$ of compact spaces $X_n$
with continuous maps $\pi_n\colon X_{n+1} \rightarrow X_n$,
the projective limit $\varprojlim(X_n, \pi_n)$ of the sequence $(X_n, \pi_n)_n$ is the compact space
defined by
\[\left\{(x_n)_{n\in\mathbb{N}}\in\prod_{n\in\mathbb{N}} X_n:\pi_n(x_{n+1})=x_n {\rm\ for\ all\ }n \in\mathbb{N}\right\}\]
(with the relative product topology).
It is easy to check that the above subspace is closed,
hence it is indeed a compact space.
For a group $\Gamma$,
if we have a sequence $(\alpha_n)_n$ of dynamical systems
$\alpha_n\colon \Gamma\curvearrowright X_n$
which is compatible with $(\pi_n)_n$ (namely $\pi_n\circ\alpha_{n+1}=\alpha_{n}\circ\pi_n$ holds for all $n$),
then the diagonal action of the sequence induces the action of $\Gamma$ on the projective limit.
We refer to this induced action as the projective limit of the projective system $(\alpha_n)_n$.
Note that by definition, the reduced crossed product $C(\varprojlim X_n)\rtimes\Gamma$
is isomorphic to the inductive limit $\varinjlim (C(X_n)\rtimes\Gamma)$.

(We remark that projective systems and projective limits are also defined for more general setting.
The index of the projective system can be arbitrary directed set.
However, in this paper, we only deal the projective systems of sequences.)

\subsection{Supernatural numbers and associated abelian groups}
To describe certain abelian groups, we need the notion of the supernatural numbers.
Denote by $\mathcal{P}$ the set of all prime numbers.
A supernatural number is a map
from $\mathcal{P}$ into the set $\{0, 1, \ldots, \infty\}$.
A supernatural number $N$ is formally presented as the formal infinite product
$\prod_{p\in\mathcal{P}} p^{N(p)}$ of powers of prime numbers.
By this presentation, supernatural numbers are naturally
regarded as a generalization of natural numbers.
We say a supernatural number $N$ is infinite if it is not a natural number.
Note that many operations of natural numbers are naturally extended to that of supernatural numbers.
(E.g., the (possibly infinite) product, the greatest common divisor, and the least common multiple; which
correspond to the summation, the infimum, and the supremum of the corresponding functions $N$, respectively.)

For a supernatural number $N$,
denote by $\Lambda(N)$ the subgroup of $\mathbb{Q}/\mathbb{Z}$ generated by
the elements whose order divides $N$ and
denote by $\Upsilon(N)$ the inverse image of the group $\Lambda(N)$ under the quotient homomorphism
$\mathbb{Q}\rightarrow\mathbb{Q}/\mathbb{Z}$.
Note that for two supernatural numbers $N$ and $M$,
the groups $\Lambda(N)$ and $\Lambda(M)$ are isomorphic
if and only if $N=M$ holds, and the groups $\Upsilon(N)$ and $\Upsilon(M)$ are isomorphic
if and only if there are natural numbers $n$ and $m$ such that the equality
$nN=mM$ holds.
\subsection*{Notation}
Here we fix notation which are used throughout this paper.
\begin{itemize}
\item Denote by $\mathbb{F}_n$ the free group of the rank $n$.
We always assume $2\leq n<\infty$.
\item We denote by $S$ the set of canonical generators of $\mathbb{F}_n$.
\item For $w\in\mathbb{F}_n$, $|w|$ denotes the length of the reduced word of $w$.
\item For $m\in \mathbb{N}$, denote by $\mathbb{Z}_m$ the cyclic group of order $m$.
\item For two actions $\sigma$ and $\tau$ of a group $\Gamma$, denote by $\sigma\times\tau$
the diagonal action of $\sigma$ and $\tau$.
\item For a group action $\alpha\colon \Gamma\curvearrowright X$, $g\in \Gamma$, $x\in X$, and $Y\subset X$,
we sometimes denote $\alpha(g)(x)$, $\alpha(g)(Y)$ by $g.x$, $g.Y$, respectively, for short.
\item For a subset $X$ of a group $\Gamma$,
$\langle X\rangle$ denotes the subgroup of $\Gamma$ generated by $X$.
\item For a group $\Gamma$, denote by $D(\Gamma)$ the commutator subgroup of $\Gamma$,
that is, the subgroup generated by elements of the form $ghg^{-1}h^{-1}, g, h\in\Gamma$.
\item In this paper we only consider the reduced crossed product.
For this reason, we simply refer to it as the crossed product, and represent it by the symbol `$\rtimes$'.
\item We denote by $\lambda(g)$ the canonical implementing unitary element of $g\in\Gamma$ in the crossed product $A\rtimes\Gamma$.
\item For a \Cs -algebra $A$ and a finite set $X$,
denote by $\mathbb{M}_X(A)$ the \Cs -algebra of all $A$-valued $X$ by $X$ matrices.
\end{itemize}

\section{Elementary construction}\label{sec:Cons}
In this section we give the construction of amenable minimal Cantor $\mathbb{F}_n$-systems
which plays the fundamental role in the next two sections.
The next theorem provides amenable minimal dynamical systems of hyperbolic groups.

\begin{Thm}\label{Thm:bou}
Let $\Gamma$ be a hyperbolic group.
Let $\sigma$ be a transitive action of $\Gamma$ on a finite set $X$.
Then the diagonal action $\beta\times\sigma$ is amenable and minimal,
and its crossed product is isomorphic to
$\mathbb{M}_n(C(\partial\Gamma_0)\rtimes\Gamma_0)$,
where $\Gamma_0$ is the stabilizer subgroup of $\sigma\colon\Gamma\curvearrowright X$ at a point,
which is independent of the choice of point up to conjugacy by transitivity,
and $n=\sharp X$. 
\end{Thm}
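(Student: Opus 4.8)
The plan is to establish the three assertions of the statement in turn, the crossed product isomorphism being the substantive one.

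For amenability, I would note that the first coordinate projection $\partial\Gamma\times X\to\partial\Gamma$ is a $\Gamma$-equivariant continuous surjection, so the boundary action $\beta$ is a quotient of $\beta\times\sigma$. Since $\beta$ is amenable and, as recorded just after the definition of amenability, a dynamical system is amenable whenever one of its quotients is, amenability of $\beta\times\sigma$ follows at once. For minimality, fix a base point $x_0\in X$ and put $\Gamma_0:=\mathrm{Stab}(x_0)$; by orbit--stabilizer and transitivity $[\Gamma:\Gamma_0]=\sharp X=:n<\infty$, so $\Gamma_0$ is a finite-index, hence non-elementary hyperbolic, subgroup. To see that the orbit of an arbitrary $(\xi,x)$ meets every basic open set $U\times\{y\}$, choose $h\in\Gamma$ with $h.x=y$; the relevant group elements form the coset $h\,\mathrm{Stab}(x)$, so it suffices that $\{k.\xi:k\in\mathrm{Stab}(x)\}$ be dense in $\partial\Gamma$. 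This is precisely minimality of the boundary action of the finite-index subgroup $\mathrm{Stab}(x)$ on $\partial\Gamma$, which I obtain from the Proposition on finite-index inclusions (giving $\partial(\mathrm{Stab}(x))\cong\partial\Gamma$ equivariantly) together with minimality of the boundary action. Translating the resulting dense set by $h$ keeps it dense, which yields minimality.

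For the crossed product I would run a corner-and-matrix-unit computation. Write $B:=C(\partial\Gamma\times X)\rtimes\Gamma$ and, for $x\in X$, let $p_x\in C(\partial\Gamma\times X)\subset B$ be the characteristic projection of $\partial\Gamma\times\{x\}$; these are mutually orthogonal and sum to $1$. Choosing $g_x\in\Gamma$ with $g_x.x_0=x$ (and $g_{x_0}=e$), the relation $\lambda(g)f\lambda(g)^\ast=g.f$ gives $\lambda(g_x)p_{x_0}\lambda(g_x)^\ast=p_x$, so the $p_x$ are pairwise Murray--von Neumann equivalent with sum $1$; the elements $e_{xy}:=\lambda(g_x)p_{x_0}\lambda(g_y)^\ast$ form a system of $\sharp X$ by $\sharp X$ matrix units, whence $B\cong\mathbb{M}_X(p_{x_0}Bp_{x_0})$. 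To identify the corner, observe that for $f\in C(\partial\Gamma\times X)$ and $g\in\Gamma$ one has $p_{x_0}f\lambda(g)p_{x_0}=p_{x_0}f\,p_{g.x_0}\lambda(g)$, which vanishes unless $g.x_0=x_0$, i.e. $g\in\Gamma_0$, and for $g\in\Gamma_0$ one has $\lambda(g)p_{x_0}=p_{x_0}\lambda(g)$. Thus $p_{x_0}Bp_{x_0}$ is the closed span of the $p_{x_0}f p_{x_0}\lambda(g)$ with $g\in\Gamma_0$, and under the identification $\partial\Gamma\times\{x_0\}\cong\partial\Gamma$ (so that $p_{x_0}fp_{x_0}$ corresponds to $f|_{\partial\Gamma\times\{x_0\}}\in C(\partial\Gamma)$) conjugation by $\lambda(g)$ implements the restriction of $\beta$ to $\Gamma_0$. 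Hence $p_{x_0}Bp_{x_0}\cong C(\partial\Gamma)\rtimes\Gamma_0$. Finally, the Proposition applied to the inclusion $\Gamma_0\hookrightarrow\Gamma$ supplies a $\Gamma_0$-equivariant homeomorphism $\partial\Gamma_0\cong\partial\Gamma$, giving $C(\partial\Gamma)\rtimes\Gamma_0\cong C(\partial\Gamma_0)\rtimes\Gamma_0$ and therefore $B\cong\mathbb{M}_n(C(\partial\Gamma_0)\rtimes\Gamma_0)$.

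The routine-but-delicate step is the identification of the corner: one must verify that the indicated elements genuinely exhaust $p_{x_0}Bp_{x_0}$ and that their multiplication reproduces that of $C(\partial\Gamma)\rtimes\Gamma_0$, for which the relations $g.p_{x_0}=p_{g.x_0}$ and $\lambda(g)p_{x_0}=p_{x_0}\lambda(g)$ for $g\in\Gamma_0$ are the crux. A less computational alternative is to recognize $\beta\times\sigma$ as the system induced from $\beta|_{\Gamma_0}$ along $\Gamma_0\leq\Gamma$, via the $\Gamma$-equivariant homeomorphism $\Gamma\times_{\Gamma_0}\partial\Gamma\cong\partial\Gamma\times(\Gamma/\Gamma_0)$ sending $[g,\xi]$ to $(g.\xi,g\Gamma_0)$, and then to invoke Green's imprimitivity theorem in its finite-index form, which outputs $\mathbb{M}_n(C(\partial\Gamma)\rtimes\Gamma_0)$ directly. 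Either route reduces the theorem to bookkeeping plus the single genuine input of the finite-index boundary identification already recorded in the Proposition.
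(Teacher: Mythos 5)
Your proposal is correct, and your amenability and minimality arguments coincide with the paper's (amenability because $\beta$ is an amenable quotient; minimality reduced to minimality of $\beta$ restricted to the finite-index stabilizer, via the equivariant identification $\partial\Gamma_0\cong\partial\Gamma$ — you merely spell out the reduction to stabilizer orbits that the paper leaves implicit). For the crossed product, however, you take a genuinely different route. The paper writes down an explicit covariant pair $(\pi,u)$ of $\beta\times\sigma$ into $\mathbb{M}_X(C(\partial\Gamma_0)\rtimes\Gamma_0)$ built from a cross section $\rho$ of $\Gamma\to\Gamma/\Gamma_0$, checks it gives a $\ast$-isomorphism of the \emph{algebraic} crossed products, and then invokes amenability (reduced $=$ universal enveloping ${\rm C}^\ast$-algebra) to extend it to the completions. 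You instead work internally: the projections $p_x$ are permuted by the unitaries $\lambda(g_x)$, the resulting matrix units give $B\cong\mathbb{M}_X(p_{x_0}Bp_{x_0})$, and the corner is identified with $C(\partial\Gamma)\rtimes\Gamma_0$. In fact the two isomorphisms coincide when $g_x=\rho(x)$, so the difference is in the verification, and each route has an advantage. Yours avoids checking covariance and multiplicativity by hand, and it does not need amenability at this step, provided you close the one point you flag as delicate: that the closed span of $C(\partial\Gamma\times\{x_0\})\,\lambda(\Gamma_0)$ inside the \emph{reduced} crossed product is canonically $C(\partial\Gamma\times\{x_0\})\rtimes_r\Gamma_0$ rather than some other completion; this follows from the standard conditional-expectation argument (the canonical expectation of $B$ restricts to a faithful one on this subalgebra), or, less economically, from amenability of $\beta|_{\Gamma_0}$. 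The paper's more pedestrian construction buys something you lose: an explicit formula for the isomorphism in terms of the section $\rho$, which is precisely what is cited and exploited later (in the proof of Lemma \ref{Lem:Conn}, where ${\rm im}(\rho)=\{s^l:0\leq l\leq k-1\}$ enters the $K_0$ computations), so for the purposes of this paper the explicit covariant representation is not an incidental choice.
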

\begin{proof}
Amenability of $\beta\times\sigma$ is clear since it has an amenable quotient.
Fix $x_0\in X$ and denote by $\Gamma_0$ the stabilizer subgroup of $\sigma$ at $x_0$.
Then $\Gamma_0$ is a finite index subgroup of $\Gamma$,
and therefore it is hyperbolic and the restriction of the boundary action of $\Gamma$ to $\Gamma_0$
coincides with the boundary action of $\Gamma_0$.
(Recall that the inclusion $\Gamma_0\hookrightarrow\Gamma$
induces a $\Gamma_0$-equivariant homeomorphism of boundary spaces.)
This shows that the restriction of $\beta$ to $\Gamma_0$ is minimal,
hence $\beta\times\sigma$ is minimal.

The last claim immediately follows from Green's imprimitivity theorem \cite[Theorem 4.1]{Gre}.
However, we need a concrete isomorphism for later use, so we construct an isomorphism directly here,
which is a very special case of \cite{Gre}.
To do this, first we identify $X$ with $\Gamma/\Gamma_0$
by the bijective map $s\in\Gamma/\Gamma_0\mapsto s.x_0 \in X.$
Take a cross section $\rho$ of the quotient map
$\Gamma\rightarrow\Gamma/\Gamma_0$.
Then define two maps $\pi$ and $u$ by
\[\pi\colon f\in C(\partial\Gamma\times X)\mapsto \bigoplus_{x\in X} \left(f\circ((\beta\times\sigma)(\rho(x)))|_{\partial\Gamma\times \{x_0\}}\right)\in\mathbb{M}_X(C(\partial\Gamma_0)\rtimes\Gamma_0)\]
\[{\rm and}\]
\[u\colon s\in\Gamma \mapsto \sum_{x\in X} E_{s.x, x}\otimes \lambda(\rho(s.x)^{-1}s\rho(x))\in\mathbb{M}_X(C(\partial\Gamma_0)\rtimes\Gamma_0),\]
here we identify $\partial\Gamma\times \{x_0\}$ with $\partial\Gamma_0$ in the canonical way.
Then the pair $(\pi, u)$ is a covariant representation of $\beta\times\sigma$.
This covariant representation induces a $\ast$-isomorphism $\theta$ between two $\ast$-algebras
$C(\partial\Gamma\times X)\rtimes_{\rm alg}\Gamma$ and $\mathbb{M}_X(C(\partial\Gamma_0)\rtimes_{\rm alg}\Gamma_0)$
where the symbol `$\rtimes_{\rm alg}$' denotes the algebraic crossed product.
Then by amenability of $\beta\times\sigma$,
the universal \Cs -enveloping algebra of the $\ast$-algebra
$C(\partial\Gamma\times X)\rtimes_{\rm alg}\Gamma$ coincides with the reduced crossed product of $\beta\times\sigma$,
and similarly for the second one.
This shows that the $\ast$-isomorphism $\theta$ extends to the desired isomorphism.
\end{proof}
\begin{Rem}
In Theorem \ref{Thm:bou}, if the stabilizer subgroup $\Gamma_0$ is ICC,
then $\beta|_{\Gamma_0}$ is topologically free. (See \cite[Lemma 3.2]{Suz} for instance.)
Hence so is $\beta\times \sigma$.
In this case, the crossed product is a Kirchberg algebra in the UCT class,
since it is isomorphic to the tensor product of a matrix algebra and a Kirchberg algebra in the UCT class.
(See \cite[Section 1]{LaS} and \cite[Proposition 3.2]{Ana}.)
\end{Rem}

A particularly interesting case
is the one that the group $\Gamma$ is a free group.
In that case, the boundary space is the Cantor set.
\begin{Rem}\label{Rem:iso}
Let $2\leq n<\infty$.
We apply Theorem \ref{Thm:bou} to the case $\Gamma=\mathbb{F}_n$
and a transitive action $\sigma\colon \mathbb{F}_n\curvearrowright X$ on a set $X$
with $\sharp X=k\in \mathbb{N}$.
Then by Schreier's formula \cite[Chap.1, Prop.3.9]{LS}, any subgroup of $\mathbb{F}_n$ of the index $k$
is the free group of rank $m:=k(n-1)+1$.
Hence the resulting crossed product is isomorphic to $\mathbb{M}_k(C(\partial\mathbb{F}_m)\rtimes\mathbb{F}_m)$.
Thus the isomorphism class of it only depends on the cardinality of $X$.
However, the inclusion
$C(\partial\mathbb{F}_n)\rtimes \mathbb{F}_n\rightarrow C(\partial \mathbb{F}_n\times X)\rtimes \mathbb{F}_n\cong \mathbb{M}_k(C(\partial\mathbb{F}_m)\rtimes\mathbb{F}_m)$
does depend on the choice of transitive action.
The difference between them plays an important role in the next section.
\end{Rem}
\begin{Rem}\label{Rem:Cun}
For the case $\Gamma=\mathbb{F}_n$, a similar argument to that of Spielberg \cite{Spi} shows that
for a transitive action $\sigma\colon\mathbb{F}_n\curvearrowright \{1, \ldots, m\}$,
the crossed product of the diagonal action $\beta\times\sigma$ is isomorphic to the Cuntz--Krieger algebra
$\mathcal{O}_A$ with the matrix $A$ given as follows.
For $s\in S\sqcup S^{-1}$, set
\[A_s\delta_t:=\left\{ \begin{array}{ll}
\delta_s & {\rm if\ } t\neq s^{-1} \\
0 & {\rm otherwise}\\
\end{array} \right.\]
and denote by $B_s$ the permutation matrix of $\sigma(s)$.
Then the matrix $A$ is given by $\sum_{s\in\widetilde{S}}A_s\otimes B_s (\in\mathbb{M}_{\widetilde{S}\times X})$.
The elements of the crossed product corresponding to the canonical generators are given by
\[V_{s, x}:=(\chi_{\Omega(s)}\otimes\delta_{s.x})\lambda_s{\rm\ for\ } (s,x)\in \widetilde{S}\times X.\]
Since the $K_0$-groups of these crossed products are determined by \cite{Spi} and Remark \ref{Rem:iso},
we obtain a formula of elementary divisors for certain matrices.
We remark that a similar result still holds for (non-amenable) free products of finitely many cyclic groups.
The proof is again based on \cite{Spi}.
\end{Rem}

In the next section, we give a more powerful construction of
amenable minimal Cantor systems for (virtually) free groups.

\section{More general constructions of amenable minimal Cantor $\mathbb{F}_n$-systems}\label{sec:gmi}
In this and the next sections we investigate more general constructions of amenable minimal Cantor systems for free groups.
We construct continuously many amenable minimal Cantor systems
and classify them in terms of the crossed products.

For computations of $K$-groups in Theorem \ref{Thm:Inf}, we need a few lemmas and facts about the $K$-theory of the boundary algebras $C(\partial\mathbb{F}_n)\rtimes\mathbb{F}_n$.
J. Spielberg has shown that the boundary algebras of free groups are presented as a Cuntz--Krieger algebra \cite{Spi}.
(The canonical generators are explicitly given. See Remark \ref{Rem:Cun} or \cite{Spi}.)
This presentation and Cuntz's computation of the $K$-theory of Cuntz--Krieger algebras \cite[Proposition 3.1]{Cun}
show that the $K_0$-group of the boundary algebra is given by
$\left(\bigoplus_{s\in S}\mathbb{Z}[p_s]_0\right)\oplus \mathbb{Z}_{n-1}[1]_0$,
where $p_s$ denotes the characteristic function of the clopen subset $\Omega(s)$ for each $s\in \widetilde{S}$.
Here for an element $x$ of a group $G$,
we denote the subgroup $\langle x \rangle$ by $\mathbb{Z}x$ (resp. $\mathbb{Z}_m x$) if $x$ is of infinite order
(resp. $x$ is of order $m$).
Notice that for $s\in S$, the equality
$s.\Omega(s^{-1})=\partial\mathbb{F}_n\setminus \Omega(s)$ holds.
This implies
the equality $[p_s]_0+[p_{s^{-1}}]_0=[1]_0$.
We also have that the $K_1$-group is isomorphic to $\mathbb{Z}^n$.

We also need a few notations for abelian groups.
For an abelian group $G$, the torsion subgroup $G^{\rm tor}$ of $G$
is the subgroup of $G$ consisting of all torsion elements.
For a finitely generated abelian group $G$, denote by $G^{\rm free}$ the quotient group of $G/G^{\rm tor}$.
The subgroup $G^{\rm tor}$ is referred to as the torsion part of $G$
and the quotient $G^{\rm free}$ is referred to as the free part of $G$.
By the structure theorem of finitely generated abelian groups, $G^{\rm free}$ is indeed free abelian
and $G$ is isomorphic to $G^{\rm free}\oplus G^{\rm tor}$ (in a non-canonical way).
(Both of them are not true for more general groups.)
Every homomorphism $h$ between two finitely generated abelian groups
induces a homomorphism between their free parts.
We denote it by $h^{\rm free}$.
Similarly, for a homomorphism $h$ between two abelian groups,
we denote by $h^{\rm tor}$ the restriction of it to the torsion subgroup,
and refer to it as the torsion part of $h$.

Every automorphism $\varphi$ of $\mathbb{F}_n$ induces the automorphism $\Phi$
of the boundary algebra $C(\partial\mathbb{F}_n)\rtimes\mathbb{F}_n$ by
\[s\in \mathbb{F}_n\mapsto \varphi(s)\in \mathbb{F}_n\]
{\center and}
\[f\in C(\partial\mathbb{F}_n)\mapsto f\circ(\partial\varphi)^{-1}\in C(\partial\mathbb{F}_n).\]
Note that the mapping $\varphi\mapsto \Phi$ preserves the composition.
The next two lemmas are about the automorphism of $K_0(C(\partial\mathbb{F}_n)\rtimes\mathbb{F}_n)$
induced from $\Phi$.
\begin{Lem}\label{Lem:auto}
Let $2\leq n<\infty$ and fix an enumeration $S=\{ s_1, \ldots, s_n\}$ of $S$.
Then, for any $A\in\GL(n, \mathbb{Z})$, there is an automorphism $\varphi$ of $\mathbb{F}_n$
such that with respect to the following identification
\[K_0(C(\partial\mathbb{F}_n)\rtimes\mathbb{F}_n)=\left(\bigoplus_{i=1}^n\mathbb{Z}[p_{s_i}]_0\right)\oplus \mathbb{Z}_{n-1}[1]_0,\]
$K_0(\Phi)$ is identified with $A\oplus\id$.
\end{Lem}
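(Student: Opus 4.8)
The plan is to exploit that the assignment $\varphi\mapsto K_0(\Phi)$ is multiplicative: since $\varphi\mapsto\Phi$ preserves composition, one has $K_0(\Phi_{\varphi\circ\psi})=K_0(\Phi_\varphi)K_0(\Phi_\psi)$. Consequently it suffices to realize each element of a generating set of $\GL(n,\mathbb{Z})$ by an automorphism whose induced map on $K_0$ is \emph{exactly} of the block-diagonal form $M\oplus\id$, that is, with no contribution into the torsion summand $\mathbb{Z}_{n-1}[1]_0$; for then a factorization $A=M_1\cdots M_r$ lifts to $\varphi=\varphi_1\circ\cdots\circ\varphi_r$ with $K_0(\Phi)=\prod_l(M_l\oplus\id)=A\oplus\id$. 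I would use the generating set consisting of the elementary matrices $I\pm E_{ij}$ $(i\neq j)$ together with the permutation matrices, which generates $\GL(n,\mathbb{Z})$ (the elementary matrices generate $SL(n,\mathbb{Z})$, and any transposition supplies an element of determinant $-1$).

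Two computations then remain, and both rest on the elementary fact that $[\chi_E]_0=[\chi_{g.E}]_0$ for every clopen $E\subseteq\partial\mathbb{F}_n$ and $g\in\mathbb{F}_n$, witnessed by the partial isometry $\chi_{g.E}\lambda(g)\chi_E$. For a permutation $\sigma$ I take $\varphi(s_i)=s_{\sigma(i)}$; then $\partial\varphi(\Omega(s_i))=\Omega(s_{\sigma(i)})$, so $\Phi(p_{s_i})=p_{s_{\sigma(i)}}$ and $K_0(\Phi)=P_\sigma\oplus\id$. For the transvection I take the \emph{left} multiplication $\varphi\colon s_i\mapsto s_js_i$ (other generators fixed). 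The decisive point is that $\partial\varphi$ carries $\Omega(s_i)$ onto the genuine length-two $S$-cylinder $\Omega(s_js_i)$: writing a point of $\Omega(s_i)$ as $s_iu$ with $u$ not beginning with $s_i^{-1}$, one gets $\varphi(s_iu)=s_js_i\,\varphi(u)$ with no cancellation at the junction (because $i\neq j$), and the reverse inclusion follows by applying $\varphi^{-1}\colon s_i\mapsto s_j^{-1}s_i$. Translating by $s_j^{-1}$ and using the key fact gives $[\Phi(p_{s_i})]_0=[\chi_{\Omega(s_js_i)}]_0=[p_{s_i}]_0$; similarly $\partial\varphi(\Omega(s_j))=\Omega(s_j)\setminus\Omega(s_js_i)$ yields $[\Phi(p_{s_j})]_0=[p_{s_j}]_0-[p_{s_i}]_0$, while $\Phi(p_{s_k})=p_{s_k}$ for $k\neq i,j$. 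No $[1]_0$ ever appears, so $K_0(\Phi)=(I-E_{ij})\oplus\id$, and the inverse automorphism realizes $(I+E_{ij})\oplus\id$.

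The main obstacle is exactly the control of this torsion summand, and it is what forces the use of left rather than right multiplication. Had I chosen the right transvection $s_i\mapsto s_is_j$, the image $\partial\varphi(\Omega(s_i))$ would no longer be a single $S$-cylinder, and a direct computation shows its $K_0$-class acquires a nonzero multiple of $[1]_0$ whenever $n\geq3$ (so that $\mathbb{Z}_{n-1}\neq0$); the induced map would then be only block upper-triangular, of the form $\left(\begin{smallmatrix}M&\ast\\0&1\end{smallmatrix}\right)$ with a nonzero torsion column, rather than $M\oplus\id$. Replacing it by the left transvection collapses the image to one cylinder and annihilates this term. Combining the three clean realizations above with multiplicativity then produces, for each $A\in\GL(n,\mathbb{Z})$, an automorphism $\varphi$ with $K_0(\Phi)$ identified with $A\oplus\id$, as required.
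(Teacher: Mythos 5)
Your proof is correct and takes essentially the same route as the paper's: the paper likewise combines multiplicativity of $\varphi\mapsto K_0(\Phi)$ with the realization of a generating set of $\GL(n,\mathbb{Z})$ (permutations together with the left Nielsen move $s_1\mapsto s_2s_1$), via the same cylinder identities $\partial\varphi(\Omega(s_i))=s_j.\Omega(s_i)$ and $\partial\varphi(\Omega(s_j))=\Omega(s_j)\setminus s_j.\Omega(s_i)$. One small correction to your motivational aside (which your proof never uses): for the right transvection $s_i\mapsto s_is_j$ the torsion defect does not appear in the class of $\partial\varphi(\Omega(s_i))$ --- one can check that $[\chi_{\partial\varphi(\Omega(s_i))}]_0=[p_{s_i}]_0$ exactly --- but rather in the image of the other generator, namely $[\Phi(p_{s_j})]_0=[p_{s_j}]_0-[p_{s_i}]_0+[1]_0$; the conclusion that the induced map fails to be block diagonal for $n\geq 3$ is nevertheless correct.
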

\begin{proof}
We claim that for the following automorphisms
\begin{enumerate}[\upshape (i)]
\item the automorphism induced from a permutation of $S$,
\item the automorphism given by
\[s_1\mapsto s_2s_1 {\rm\ and\ } s_i\mapsto s_i {\rm\ for\ } 2\leq i\leq n,\]
\end{enumerate}
the equality $K_0(\Phi)={}^t({\varphi^{\rm ab})}^{-1}\oplus\id$ holds under the identification
of $\bigoplus_{s\in S}\mathbb{Z}[p_s]_0$ with the abelianization $\mathbb{F}_n^{\rm ab}$ of $\mathbb{F}_n$.
Then the same condition holds for any element of the subgroup of ${\rm Aut}(\mathbb{F}_n)$ generated
by these automorphisms.
This ends the proof.
The case (i) is obvious, so let $\psi$ be the automorphism of $\mathbb{F}_n$
given in the item (ii).
For our purpose, it is enough to expand $[\Psi(p_s)]_0$ as a linear combination of $[p_t]_0$'s.
By the definition of $\Psi$, for each $s\in S$, the projection $\Psi(p_s)$ is the characteristic function of
$\partial\psi(\Omega(s; S))=\Omega(\psi(s); \psi(S))$.
Then one can check easily that the following three equations hold.
\[\Omega(\psi(s_1); \psi(S))=s_2.\Omega(s_1; S).\]
\[\Omega(\psi(s_2); \psi(S))=\Omega(s_2; S)\setminus(s_2.\Omega(s_1; S)).\]
\[\Omega(\psi(s_i); \psi(S))=\Omega(s_i; S) {\rm\ for\ }2<i\leq n.\]
Here we only give a proof of the inclusion $s_2.\Omega(s_1; S)\subset \Omega(\psi(s_1); \psi(S))$.
The rest of the proof is done in a similar way.
Let $w\in s_2.\Omega(s_1, S)$ be given.
Then the reduced form of $w$ is of the form
$s_2s_1v_1s_1^{k_1}s_2^{l_1}v_2\cdots$
for some $v_i \in \langle s_3, \ldots, s_n \rangle$, $k_i, l_i \in \mathbb{Z}$.
(Here we allow the possibility that $v_i=e$, $k_i=0$, $l_i=0$.)
Then the expansion of $w$ with respect to $\psi(S)$
is given by reducing the formal infinite product
$\psi(s_1)v_1 (\psi(s_2)^{-1}\psi(s_1))^{k_1}\psi(s_2)^{l_1} v_2\cdots$ to a reduced form (with respect to $\psi(S)$).
Note that since $w=s_2s_1v_1s_1^{k_1}s_2^{l_1}v_2\cdots$ is reduced with respect to $S$,
the equality $v_1=e$ implies $k_1\geq 0$ and similarly for the other places.
This shows that any cancellation does not remove the first $\psi(s_1)$.
Thus we have $s_2.\Omega(s_1, S) \subset \Omega(\psi(s_1), \psi(S))$.

From these equations, we obtain the equations
\[\Psi([p_{s_1}]_0)=[p_{s_1}]_0,\]
\[\Psi([p_{s_2}]_0)=[p_{s_2}]_0-[p_{s_1}]_0,\]
\[\Psi([p_{s_i}]_0)=[p_{s_i}]_0{\rm\ for\ }2<i\leq n.\]
This shows that the automorphism $\psi$ satisfies our claim.
\end{proof}

\begin{Lem}\label{Lem:auto2}
Let $t, u$ be two distinct element of $S$ and let $m\in\mathbb{Z}$.
Let $\psi$ be the automorphism of $\mathbb{F}_n$ defined by
\[t\mapsto t, u\mapsto t^mu t^{-m},{\rm\ and\ }v\mapsto v{\rm\ for\ the\ other\ } v\in S.\]
Then $K_0(\Psi)$ is given by
\[[1]_0\mapsto [1]_0, [p_t]_0\mapsto [p_t]_0-m[1]_0,{\rm\ and\ }[p_v]_0\mapsto [p_v]_0 {\rm\ for\ }u\in S\setminus\{t\}.\]
\end{Lem}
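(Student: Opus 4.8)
The plan is to follow the same strategy as in the proof of Lemma \ref{Lem:auto}: express each image projection $\Psi(p_s)$ as the characteristic function of a clopen set, rewrite that set in terms of the standard sets $\Omega(v;S)$ and their $\mathbb{F}_n$-translates, and then pass to $K_0$. Writing $u':=t^m u t^{-m}$, so that $\psi(S)=\{t,u'\}\cup\{v\in S:v\neq t,u\}$, the starting point is the identity $\Psi(p_s)=\chi_{\Omega(\psi(s);\psi(S))}$ coming from $\partial\psi(\Omega(s;S))=\Omega(\psi(s);\psi(S))$, exactly as in Lemma \ref{Lem:auto}. The only other general fact I need is that the $K_0$-class of the characteristic function of a clopen set is invariant under the $\mathbb{F}_n$-action, that is, $[\chi_{g.E}]_0=[\chi_E]_0$ for every $g\in\mathbb{F}_n$ and clopen $E$, since $\lambda(g)\chi_E\lambda(g)^\ast=\chi_{g.E}$ is unitarily equivalent to $\chi_E$ in the crossed product. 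The value $[1]_0\mapsto[1]_0$ is automatic because $\Psi$ is unital.

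First I would treat $m\geq 0$ and compute the three types of generators separately. For a generator $v\in S\setminus\{t,u\}$, the letter $v$ is unaffected by the substitution $u\leftrightarrow t^{-m}u't^{m}$, so a short inspection of first letters in the $\psi(S)$-expansion gives $\Omega(v;\psi(S))=\Omega(v;S)$, hence $[p_v]_0\mapsto[p_v]_0$. For $u$ itself, the point is that $w\mapsto t^{m}w$ carries $\Omega(u;S)$ onto $\Omega(u';\psi(S))$: a reduced word begins with $u'$ in its $\psi(S)$-expansion precisely when its $S$-expansion begins with $t^{m}u$. Translation invariance then yields $[p_u]_0=[\chi_{\Omega(u;S)}]_0\mapsto[\chi_{\Omega(u';\psi(S))}]_0=[p_u]_0$.

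The substantive computation, and the step I expect to be the main obstacle, is the image of $[p_t]_0$. Here I would track carefully how an initial run of $t$'s in the $S$-expansion of $w$ behaves after substituting $u^{\pm1}\mapsto t^{-m}(u')^{\pm1}t^{m}$ and reducing. If $w=t^{k}a\cdots$ with $k\geq 1$ and $a\neq t^{\pm1}$ the first non-$t$ letter, then the first $\psi(S)$-letter of $w$ is $t$ unless $a=u^{\pm1}$ and $k\leq m$, in which case the leading $t$-power is cancelled down and the first letter becomes $(u')^{\pm1}$ or $t^{-1}$. This identifies
\[\Omega(t;\psi(S))=\Omega(t;S)\setminus\bigsqcup_{k=1}^{m}t^{k}.\bigl(\Omega(u;S)\sqcup\Omega(u^{-1};S)\bigr),\]
a disjoint decomposition inside $\Omega(t;S)$. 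Passing to $K_0$ and using $[\chi_{t^{k}.\Omega(u^{\pm1};S)}]_0=[p_{u^{\pm1}}]_0$ together with $[p_u]_0+[p_{u^{-1}}]_0=[1]_0$, each of the $m$ removed pieces contributes $[1]_0$, giving $[p_t]_0\mapsto[p_t]_0-m[1]_0$. The delicate part is exactly this run-length bookkeeping: one must see that cancellation can consume an initial block of up to $m$ copies of $t$, which is precisely the source of the factor $m$ (a naive look at the second letter alone would wrongly produce the coefficient $1$). Finally, for $m<0$ I would avoid redoing the analysis by noting $\psi_{m'}\circ\psi_{m}=\psi_{m+m'}$, so that $\psi_{-m}=\psi_{m}^{-1}$; since the claimed assignment is multiplicative in $m$ and $\varphi\mapsto\Phi$ preserves composition, the case $m<0$ is then forced on $K_0$ by the already-established case $m\geq 0$.
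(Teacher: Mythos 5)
Your proposal is correct, but it follows a genuinely different route from the paper's proof. The paper argues purely algebraically: it introduces the elementary automorphisms $\varphi_1(u)=u^{-1}$ and $\varphi_2(u)=tu$ (fixing all other generators), checks the factorization $\psi=\varphi_2\circ\varphi_1\circ\varphi_2\circ\varphi_1$ (this is the case $m=1$; the general case follows by composition, since $\psi_m=\psi_1^{\,m}$ and $\varphi\mapsto\Phi$ preserves composition), and then simply multiplies the $K_0$-matrices that are already available: $K_0(\Phi_1)$ comes from the relation $[p_{u^{-1}}]_0=[1]_0-[p_u]_0$, and $K_0(\Phi_2)$ was computed inside the proof of Lemma \ref{Lem:auto}. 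No new boundary combinatorics is needed there. You instead redo the cylinder-set analysis directly for $\psi$: your identifications $\Omega(v;\psi(S))=\Omega(v;S)$ for $v\in S\setminus\{t,u\}$, $\Omega(t^mut^{-m};\psi(S))=t^m.\Omega(u;S)$, and the decomposition $\Omega(t;\psi(S))=\Omega(t;S)\setminus\bigsqcup_{k=1}^{m}t^{k}.(\Omega(u;S)\sqcup\Omega(u^{-1};S))$ are all correct -- the run-length bookkeeping checks out, since a leading block $t^{k}u^{\pm1}\cdots$ loses its initial $t$-power in the $\psi(S)$-expansion exactly when $k\leq m$, and no word outside $\Omega(t;S)$ can acquire a leading $t$ -- and passing to $K_0$ via $[\chi_{g.E}]_0=[\chi_E]_0$ and $[p_u]_0+[p_{u^{-1}}]_0=[1]_0$ gives the stated formula; your reduction of $m<0$ to $m\geq 0$ by inverting $K_0(\Psi_m)$ is also valid. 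As for what each approach buys: the paper's factorization is shorter and recycles Lemma \ref{Lem:auto} wholesale, at the cost of hiding where the coefficient $m$ comes from; your computation is self-contained, handles all $m\geq 0$ in one stroke rather than by iterating the case $m=1$, and makes the geometry transparent -- the $m$ deleted cylinders $t^{k}.(\Omega(u;S)\sqcup\Omega(u^{-1};S))$, each of class $[1]_0$, are exactly the source of the correction term $-m[1]_0$.
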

\begin{proof}
Let $\varphi_1, \varphi_2 $ be the automorphisms of $\mathbb{F}_n$ defined by
$\varphi_1(u)= u^{-1}, \varphi_1(v) := v {\rm\ for\ }v\in S\setminus\{u\},$
$\varphi_2(u) := tu,$ and $\varphi_2(v) := v{\rm \ for\ }v\in S\setminus\{u\}$.
Then a direct computation shows the equality
$\psi=\varphi_2\circ \varphi_1\circ\varphi_2\circ \varphi_1$.
Therefore, to compute $K_0(\Psi)$, it suffices to compute $K_0(\Phi_1)$ and $K_0(\Phi_2)$.
The computation of $K_0(\Phi_1)$ is easily derived from the equation $[p_{u^{-1}}]_0=[1]_0-[p_u]_0$.
The $K_0(\Phi_2)$ is computed in the proof of Lemma \ref{Lem:auto}.
Now the claim follows from a simple algebraic computation.
\end{proof}
\begin{Lem}\label{Lem:Conn}
Let $\mathbb{F}_n$ be the free group.
Enumerate $S$ as $S:=\{s_1,\ldots, s_n\}$.
Consider the presentation
\[K_0(C(\partial\mathbb{F}_n)\rtimes\mathbb{F}_n)=\left(\bigoplus_{i=1}^{n}\mathbb{Z}[p_{s_i}]_0\right)\oplus \mathbb{Z}_{n-1}[1]_0,\]
where $p_s:=\chi_{\Omega(s; S)}$ for $s\in S\sqcup S^{-1}$.
Similarly, for each enumerated free basis $W:=\{w_1, \ldots, w_m\}$ of a finite index subgroup $\Gamma$ of $\mathbb{F}_n$,
consider the presentation
\[K_0(C(\partial\mathbb{F}_n\times(\mathbb{F}_n/\Gamma))\rtimes\mathbb{F}_n)=\left(\bigoplus_{i=1}^m\mathbb{Z}[q_{(w_i; W)}]_0\right)\oplus \mathbb{Z}_{m-1}[r_W]_0\]
where $q_{(w; W)}:=\chi_{\Theta(w;W)}\otimes\delta_{[e]}$ for $w\in W\sqcup W^{-1}$
and $r_W:=1\otimes\delta_{[e]}$.
$($This follows from the isomorphism given in Theorem $\ref{Thm:bou}$.
See also the proof below.$)$
Let
$$j \colon C(\partial\mathbb{F}_n)\rtimes\mathbb{F}_n\rightarrow C(\partial\mathbb{F}_n\times(\mathbb{F}_n/\Gamma))\rtimes\mathbb{F}_n$$
denote the canonical inclusion.
Let $A\colon \mathbb{Z}^n\rightarrow\mathbb{Z}^{n}$ be an injective homomorphism
and let $l$ denote the product of all elementary divisors of $A$.
Then for any left invertible inclusion
$Q\colon\mathbb{Z}^n\rightarrow\mathbb{Z}^{l(n-1)+1}$,
there exists an enumerated finite subset $W=\{w_1,\ldots, w_m\}$ of $\mathbb{F}_n$
satisfying the following conditions.
\begin{itemize}
\item The subset $W$ is a free basis of a subgroup $\Gamma$ of $\mathbb{F}_n$.
\item The index $[\mathbb{F}_n:\Gamma]$ is $l$ $($hence $m=l(n-1)+1)$.
\item With respect to the above enumerated bases,
$K_0(j)^{\rm free}$ is presented by $QA$.
\item The torsion part of $K_0(j)$ is injective.
\item The image of $\bigoplus_{i=1}^{n}\mathbb{Z}[p_{s_i}]_0$ under $K_0(j)$ is contained in the subgroup
\[\left(\bigoplus_{i=1}^m\mathbb{Z}[q_{(w_i; W)}]_0\right)\oplus \Lambda\]
where $\Lambda$ denotes the subgroup of $\mathbb{Z}_{m-1}[r_W]_0$ generated by
elements of order $2$, which must be either trivial or isomorphic to $\mathbb{Z}_2$.
\end{itemize}
\end{Lem}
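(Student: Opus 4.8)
The plan is to make the map $K_0(j)$ completely explicit, then reduce to a diagonal $A$ by Smith normal form together with the automorphism Lemmas \ref{Lem:auto} and \ref{Lem:auto2}, and finally to realize the diagonal case by an explicit ``product of cyclic covers''. I would start from the concrete isomorphism $\theta$ of Theorem \ref{Thm:bou}. For $s\in S\sqcup S^{-1}$ the projection $j(p_s)=\chi_{\Omega(s;S)}\otimes 1$ is carried by the map $\pi$ of that proof to the diagonal matrix $\bigoplus_{x\in\mathbb{F}_n/\Gamma}\chi_{\rho(x)^{-1}\Omega(s;S)}$, where $\rho$ is the chosen cross section. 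Hence
\[K_0(j)([p_s]_0)=\sum_{x\in\mathbb{F}_n/\Gamma}\left[\chi_{\rho(x)^{-1}\Omega(s;S)}\right]_0\in K_0(C(\partial\Gamma)\rtimes\Gamma),\]
and the whole lemma reduces to expressing the translated $S$-cylinders $\rho(x)^{-1}\Omega(s;S)$ in terms of the $W$-cylinders $\Theta(w;W)$, that is, in the chosen basis $\{[q_{(w_i;W)}]_0\}\cup\{[r_W]_0\}$; on the free part this amounts to reading off, with sign, the first $W$-alphabet of each cylinder.

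Next I would reduce the matrix $A$. Writing $A=UDV$ in Smith normal form with $U,V\in\GL(n,\mathbb{Z})$ and $D=\mathrm{diag}(d_1,\dots,d_n)$, $\prod_i d_i=l$, I dispose of $U$ and $V$ by two independent mechanisms. A source automorphism realizing $V$ (Lemma \ref{Lem:auto}) precomposes $K_0(j)$ with an arbitrary element of $\GL(n,\mathbb{Z})$ and fixes $[1]_0$, hence multiplies $K_0(j)^{\rm free}$ on the right without disturbing the torsion part. On the target, changing the free basis $W$ of $\Gamma=\mathbb{F}_m$ multiplies $K_0(j)^{\rm free}$ on the left by an arbitrary $G\in\GL(m,\mathbb{Z})$ (Lemma \ref{Lem:auto} for $\mathbb{F}_m$). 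Since a left invertible $Q$ factors as $Q=G_0\binom{I_n}{0}$ for some $G_0\in\GL(m,\mathbb{Z})$, once the free part is realized as the $m\times n$ matrix $\binom{D}{0}$ (with $D$ on top and zero below) for some basis, the change of basis by the $G_0$ attached to $QU$ produces $QUD$, and the source automorphism by $V$ then produces $QUDV=QA$. Thus it suffices to treat diagonal $A=D$ and to realize $K_0(j)^{\rm free}=\binom{D}{0}$.

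For the diagonal case I would take $\Gamma:=\ker(\mathbb{F}_n\to\bigoplus_{i=1}^n\mathbb{Z}_{d_i})$, $s_i\mapsto e_i$, of index $l=\prod_i d_i$ by Schreier's formula; its Schreier graph is the Cayley graph of $\bigoplus_i\mathbb{Z}_{d_i}$, a product of cycles. Choosing a spanning tree and the resulting Schreier basis $W_0$ (whose ``long'' generators are the conjugates of the $s_i^{d_i}$), I feed the graph into the formula above and compute each $K_0(j)([p_{s_i}]_0)$. The expected outcome is that, modulo torsion, $[p_{s_i}]_0$ is sent to $d_i$ times the generator attached to $s_i^{d_i}$ and to $0$ on the remaining generators, so that $K_0(j)^{\rm free}=\binom{D}{0}$. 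This local-degree computation, which tracks how the translated $S$-cylinders $\rho(x)^{-1}\Omega(s_i;S)$ decompose into $W_0$-cylinders and keeps both the free and torsion contributions, is the technical heart of the argument and the step I expect to be the main obstacle.

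Finally I would settle the torsion bookkeeping, which is partly automatic and partly delicate. The torsion part of $K_0(j)$ is governed only by $[1]_0\mapsto[1]_0$; since the unit of the target equals $l[r_W]_0$, which has order $n-1$ in $\mathbb{Z}_{m-1}[r_W]_0=\mathbb{Z}_{l(n-1)}[r_W]_0$, the induced map $\mathbb{Z}_{n-1}\to\mathbb{Z}_{l(n-1)}$, $1\mapsto l$, is injective, giving the fourth bullet for free. For the last bullet I write $K_0(j)([p_{s_i}]_0)=\sum_j c_{ij}[q_{(w_j;W)}]_0+t_i[r_W]_0$ and adjust the torsion components $t_i$ by the automorphisms of Lemma \ref{Lem:auto2}: the map $u\mapsto t^{m}ut^{-m}$ has trivial abelianization, hence preserves this $\Gamma$ and acts trivially on $\mathbb{F}_n/\Gamma$, and its effect on $K_0$ translates $t_i$ by multiples of $[1]_0=l[r_W]_0$. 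One checks that the relation $[p_{s_i}]_0+[p_{s_i^{-1}}]_0=[1]_0$ pins $t_i$ modulo $l$ to a class containing an order-$\le 2$ element, so after these adjustments (carried out in the final basis) each $t_i[r_W]_0$ lies in the subgroup $\Lambda$ generated by the order-$2$ elements of $\mathbb{Z}_{m-1}[r_W]_0$; as $\mathbb{Z}_{m-1}$ is cyclic, $\Lambda$ is either trivial or isomorphic to $\mathbb{Z}_2$, completing the proof.
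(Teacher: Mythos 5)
Your reduction of the general case to diagonal $A$ is sound and is essentially the paper's own first step: closure of the realizable maps under pre-composition by $\GL(n,\mathbb{Z})$ (source automorphisms, Lemma \ref{Lem:auto}) and post-composition by $\GL(m,\mathbb{Z})$ (change of the free basis $W$ of $\Gamma$), together with Smith normal form and the factorization $Q=G_0\binom{I_n}{0}$. The fourth bullet is also correctly dispatched, since $K_0(j)([1]_0)=l[r_W]_0$ has order exactly $n-1$. But what remains is precisely the content of the lemma, and you have not supplied it: the step you yourself call ``the main obstacle'' --- expressing $\sum_{x}[\chi_{\rho(x)^{-1}\Omega(s_i;S)}]_0$ in a Schreier basis of the product cover $\Gamma=\ker(\mathbb{F}_n\to\bigoplus_i\mathbb{Z}_{d_i})$, torsion included --- is asserted as an ``expected outcome'', not proved. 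Moreover that expectation is false as stated: in a Schreier basis, modulo torsion, $[p_{s_i}]_0$ goes to $d_i$ times the \emph{sum} of the $l/d_i$ wrap-around generators $\rho(x)s_i\rho(xs_i)^{-1}$ (those cosets $x$ with $x_i=d_i-1$), not to $d_i$ times a single generator; the elementary divisors do come out to $(d_1,\ldots,d_n)$, which is all you need, but that is a computation, not a guess, and it is the hard part.

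The torsion bookkeeping is where the gap is fatal rather than cosmetic. The translates $\rho(x)^{-1}\Omega(s_i)$ are by elements \emph{outside} $\Gamma$, so their $K_0$-classes are not the classes of untranslated cylinders; each coset representative contributes a correction in $\mathbb{Z}_{m-1}[r_W]_0$. In the paper's proof (for a single cyclic cover $\ker(\pi_s\colon\mathbb{F}_n\to\mathbb{Z}_k)$) this is handled by the decompositions $\Omega(s)=\bigsqcup_{w\in I}\Theta(w;W)$, $\Omega(t)=\Theta(t;W)$, and by the observation that translation by $s^{-1}$ is induced by $\ad(s^{-1})\in{\rm Aut}(\Gamma)$, whose $K_0$-action is computed from Lemmas \ref{Lem:auto} and \ref{Lem:auto2}; the resulting torsion term is exactly $\frac{k(k-1)(n-1)}{2}[r_W]_0$, and the fifth bullet holds because this \emph{computed} number has order at most $2$ --- not because of any formal argument. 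Your proposed repair cannot substitute for this: re-expressing $K_0(j)$ after a Lemma \ref{Lem:auto2} basis change of $\Gamma$ shifts the torsion coordinates $(t_1,\ldots,t_n)$ only by integer multiples of the rows of the free-part matrix (so $t_i$ is adjustable only modulo $d_i$), while your source-side moves shift a single $t_i$ by multiples of $l$; in either case, whether the $t_i$ can be pushed into $\Lambda$ depends on the very values you have not computed, and the relation $[p_{s_i}]_0+[p_{s_i^{-1}}]_0=[1]_0$ does not pin them down. Note finally that the paper avoids your harder one-shot computation altogether by a different decomposition: it realizes one elementary divisor at a time with the cyclic cover above, where the full computation is tractable, and then composes the nested canonical inclusions (the extra bullets being stable under composition). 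Reworking your argument into that iterative form is the shortest route to a complete proof; keeping the product cover commits you to carrying out the conjugation-trick computation over all $l$ coset representatives.
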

\begin{proof}
First we show that if the claim holds for a homomorphism $QA\colon \mathbb{Z}^n\rightarrow\mathbb{Z}^{l(n-1)+1}$,
then it also holds for any homomorphisms of the form $BQAC$
where $B\in\GL({l(n-1)+1}, \mathbb{Z})$ and $C\in\GL(n, \mathbb{Z})$.
To see this, take an enumerated finite subset $W$ which satisfies the required conditions for $QA$.
Clearly, it suffices to show it for the case either $B=\id$ or $C=\id$ holds.

First we deal the case $B=\id$.
In this case, take an automorphism $\varphi$ of $\mathbb{F}_{n}$ satisfying
$K_0(\Phi)=C^{-1}\oplus\id$, which exists by Lemma \ref{Lem:auto}.
Consider the commutative diagram
$$
\begin{CD}
C(\partial\mathbb{F}_n)\rtimes\mathbb{F}_n@> j >>C(\partial\mathbb{F}_n\times(\mathbb{F}_n/\Gamma))\rtimes\mathbb{F}_n\\
@V \Phi VV @V \widetilde{\Phi} VV\\
C(\partial\mathbb{F}_n)\rtimes\mathbb{F}_n@>j' >>C(\partial\mathbb{F}_n\times(\mathbb{F}_n/\varphi(\Gamma)))\rtimes\mathbb{F}_n
\end{CD}
$$
where each row map is the canonical inclusion
and the second column map $\widetilde{\Phi}$ is the isomorphism
induced from the following covariant representation
\[s \in \mathbb{F}_n\mapsto \varphi(s)\in \mathbb{F}_n,\]
\[f\in C(\partial\mathbb{F}_n \times (\mathbb{F}_n/\Gamma))\mapsto (f\circ(\partial\varphi\times \tilde{\varphi})^{-1})\in C(\partial\mathbb{F}_n\times(\mathbb{F}_n/\varphi(\Gamma)))\rtimes\mathbb{F}_n.\]
Here $\tilde{\varphi}\colon \mathbb{F}_n/\Gamma\rightarrow \mathbb{F}_n/\varphi(\Gamma)$
is the bijection defined by $x\Gamma\in \mathbb{F}_n/\Gamma\mapsto \varphi(x\Gamma)\in \mathbb{F}_n/\varphi(\Gamma)$
Then on the level of $K_0$-groups, the above commutative diagram becomes
the following commutative diagram.
$$
\begin{CD}
\mathbb{Z}^S\oplus\mathbb{Z}_{n-1}@>K_0(j) >>\mathbb{Z}^W\oplus\mathbb{Z}_{m-1}\\
@V C^{-1}\oplus{\rm id} VV @V\tau_\varphi VV\\
\mathbb{Z}^S\oplus\mathbb{Z}_{n-1}@> K_0(j')>>\mathbb{Z}^{\varphi(W)}\oplus\mathbb{Z}_{m-1}
\end{CD}
$$
where $\tau_\varphi$ is the isomorphism given by $[q_{(w: W)}]_0\mapsto[q_{(\varphi(w);\varphi(W))}]_0$ and $[r_W]_0\mapsto [r_{\varphi(W)}]_0$.
This shows that
the enumerated finite subset $\varphi(W)$ satisfies the desired conditions for the homomorphism $QAC$.
We remark that this operation may change the subgroup $\Gamma$.

Next we consider the case $C=\id$.
In this case, take an automorphism $\psi$ of $\Gamma$ such that
the induced automorphism $\Psi$ that satisfies
$K_0(\Psi)=B^{-1}\oplus\id$ with respect to the enumerated free basis $W$.
Then from this form of $K_0(\Psi)$,
we immediately conclude that the enumerated finite subset $\psi(W)$ satisfies the desired conditions for
$BQA$. 
(The point here is that all operations do not change the enumerated free basis $S$.)

From this together with the elementary divisor theory, it suffices to show the assertion for the case $A$ is a diagonal homomorphism
with respect to the standard basis.
By decomposing $A$ as a composite of finitely many homomorphisms, we only need to show the following.
For each $k \in \mathbb{N}\setminus\{1\}$,
there is a free basis $W$ of a finite index subgroup $\Gamma$ of $\mathbb{F}_n$
with the following properties.
\begin{itemize}
\item The index $[\mathbb{F}_n: \Gamma]$ is $k$.
\item The elementary divisors of $K_0(j)$
are given by $(1, 1, \ldots, 1, k)$.
\item The $K_0(j)$ satisfies the last two conditions in the statement.
\end{itemize}
To construct the desired $W$, fix $s\in S$ and
set \[W:=\left\{s^k, s^lts^{-l}:0\leq l\leq k-1, t\in S\setminus\{s\}\right\}.\]
Then $W$ is a free basis of the kernel $\Gamma$ of the homomorphism
$\pi_s\colon\mathbb{F}_n\rightarrow\mathbb{Z}_k$.
Here $\pi_s$ is given by
\[s\mapsto [1]{\rm\ and\ }t\mapsto [0]{\rm\ for\ }t\in S\setminus\{s\}.\]
In particular, the subgroup of $\mathbb{F}_n$ generated by $W$ is of index $k$.
Note that the group $\Gamma$ coincides with the stabilizer subgroup of the action
$\sigma\colon\mathbb{F}_n\curvearrowright \mathbb{F}_n/\Gamma$ of an arbitrary point.

By direct computations, we obtain the equalities
\[\Omega(s)=\bigsqcup_{w\in I} \Theta(w; W)\]
where $I:=\{ s^k,s^{l}t^{\pm 1}s^{-l}: 1\leq l\leq k-1, t\in S\setminus\{s\}\},$ and
\[\Omega(t)=\Theta(t; W){\rm\ for\ } t\in S\setminus\{s\}.\]

From the isomorphism
$C(\partial\mathbb{F}_n\times(\mathbb{F}_n/\Gamma))\rtimes\mathbb{F}_n\cong\mathbb{M}_{k}(C(\partial\Gamma)\rtimes\Gamma)$
given in Theorem \ref{Thm:bou},
the canonical (non-unital) inclusion
\[C(\partial\mathbb{F}_n\times\{[e]\})\rtimes\Gamma\rightarrow C(\partial\mathbb{F}_n\times(\mathbb{F}_n/\Gamma))\rtimes\mathbb{F}_n\]
induces the isomorphism of $K_0$-groups.
This yields the equation \[K_0(C(\partial\mathbb{F}_n\times(\mathbb{F}_n/\Gamma))\rtimes\mathbb{F}_n)=\left( \bigoplus_{w\in W} \mathbb{Z}[q_{(w; W)}]_0 \right) \oplus \mathbb{Z}_{k(n-1)}[r_W]_0.\]

Since the set ${\rm im}(\rho)=\{s^l:0\leq l\leq k-1\}$ is a complete system of representatives for the quotient
$\mathbb{F}_n/\Gamma$, the above equations of $\Theta$'s give the equations
\[p_s=\sum_{l=0}^{k-1}\sum_{v\in I}q_{(v; W)}\circ(\beta(s^{-l})\times\id)\circ({\gamma(s^l)})\]
and
\[p_t=\sum_{l=0}^{k-1}q_{(t; W)}\circ(\beta(s^{-l})\times\id)\circ({\gamma(s^{l})})\]
for $t\in S\setminus\{s\}$.

(Here we use the equality $(\beta(s^{-l})\times\id)\circ\gamma(s^l)=\id\times\sigma(s^l)$.)
We also have the equaltion
\[1=\sum_{l=0}^{k-1}r_W\circ(\gamma(s^{l})).\]
The last equation shows
the equation $[1]_0=k[r_W]_0$.
This shows that the $K_0(j)$ preserves
the order of the unit $[1]_0$.
This proves the injectivity of $K_0(j)^{\rm tor}$.

Observe that the homeomorphism $\beta(s^{-1})$ on $\partial\Gamma$
is induced by a group automorphism of $\Gamma$.
Indeed, it is induced from the conjugating automorphism $\alpha:=\ad(s^{-1})$.
(We remark that this $\alpha$ is not inner.)
Hence it extends to the automorphism $\Phi$ of
$C(\partial \Gamma)\rtimes \Gamma\cong C(\partial\mathbb{F}_n\times\{[e]\})\rtimes\Gamma$.
The proof of Lemma \ref{Lem:auto} and Lemma \ref{Lem:auto2} show that $K_0(\Phi)$ is the automorphism of
$\left( \bigoplus_{w\in W}\mathbb{Z}[q_{(w; W)}]_0 \right) \oplus\mathbb{Z}_{m-1}[r_W]_0$ given as follows.
\[[r_W]_0\mapsto [r_W]_0,\]
\[ [q_{(s^k; W)}]_0\mapsto[q_{(s^k; W)}]_0+(n-1)[r_W]_0\]
\[[q_{(w; W)}]_0\mapsto [q_{(\sigma(w); W)}]_0{\rm\ for\ }w\in W\setminus\{s^k\},\]
where $\sigma$ is the permutation of $W$ given by
\[\sigma(w):=\left\{ \begin{array}{ll}
w & {\rm if\ } w=s^k \\
s^{-1+k}ws^{1-k} & {\rm if\ } w\in S\setminus\{s\}\\
s^{-1}ws & {\rm\ otherwise}\\
\end{array}.\right.\]
This is because the automorphism $\alpha$ is equal to
the composite of the automorphism induced from the permutation $\sigma$ of $W$ and
$n-1$ automorphisms of the form appearing in Lemma \ref{Lem:auto2} with $t=s^k$ and $m=-1$.
(Notice that the equality $s^{-1}ts=s^{-k}(s^{k-1}ts^{1-k})s^{k}$ holds for $t\in S$.)

From this, the first equation is reduced to
\begin{eqnarray*}[p_s]_0&=&
\sum_{l=0}^{k-1}\left([q_{(s^k; W)}]_0+l(n-1)[r_W]_0+(k-1)(n-1)[r_W]_0\right)\\
&=&k[q_{(s^k; W)}]_0+\frac{k(k-1)(n-1)}{2}[r_W]_0.
\end{eqnarray*}
Here we use the equation $[q_{(w; W)}]_0+[q_{(w^{-1}; W)}]_0=[r_W]_0$ for $w\in W$.
Similarly, the second equation is reduced to
\[[p_t]_0=\sum_{v\in J_t}[q_{(v; W)}]_0,\]
where $J_t=\{s^{l}ts^{-l}:0\leq l\leq k-1\}$, for $t\in S\setminus \{s\}$.
Since the sets $\{s^k\}, J_t; t\in S\setminus\{s\}$ are mutually disjoint
subsets of $W$
and the order of the element $\frac{k(k-1)(n-1)}{2}[r_W]_0$
is either $0$ or $2$, 
this $W$ is what we needed.
\end{proof}
For a torsion group $H$, we associate a supernatural number $N_H$ by
\[N_H:={\rm lcm}\{\sharp K:K{\rm\ is\ a\ finite\ subgroup\ of\ }H\}.\]
For a subset $X$ of $\mathbb{Z}^{\oplus\infty}$, define the subgroup $P(X)$ of $\mathbb{Z}^{\oplus\infty}$ to be
\[\{y\in \mathbb{Z}^{\oplus\infty} :{\rm there\ exists\ } n\in\mathbb{Z}\setminus\{0\} {\rm \ with\ } ny\in \langle X\rangle\}.\]
We denote by $\pi$ the quotient homomorphism $\mathbb{Q}^{\oplus\infty}\rightarrow\mathbb{Q}^{\oplus\infty}/\mathbb{Z}^{\oplus\infty}$.
\begin{Lem}\label{Lem:Ind}
Let $G$ be a subgroup of $\mathbb{Q}^{\oplus \infty}$
which contains $\mathbb{Z}^{\oplus\infty}$ as a subgroup of infinite index.
Let $2\leq n<\infty$.
Then $G$ is isomorphic to the inductive limit of an inductive system of the form
$(\mathbb{Z}^{k_m}, A_m)_m$ that satisfies the following conditions.
\begin{itemize}
\item Each connecting map $A_m$ is injective.
\item The sequence $(k_m)_m$ satisfies $k_1=n$ and $k_m=l_{m-1}(k_{m-1}-1)+1$ for $m\geq 2$,
where for each $m$, $l_m$ denotes the product of all elementary divisors of $A_m$.
\item The formal infinite product $\prod_m l_m$ is equal to $N_{G/\mathbb{Z}^{\oplus\infty}}$.
\end{itemize}
\end{Lem}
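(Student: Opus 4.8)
The plan is to translate the statement into the construction of a chain of free subgroups and then to build that chain by an explicit recursion. Set $H:=G/\mathbb{Z}^{\oplus\infty}$ and note first that, since $\mathbb{Z}^{\oplus\infty}$ has infinite index, $H$ is an infinite torsion group; as every infinite torsion abelian group contains finite subgroups of unbounded order, $N_H$ is an infinite supernatural number. Because the connecting maps $A_m$ are injective and $G$ is torsion free, giving an inductive system $(\mathbb{Z}^{k_m},A_m)$ with limit $G$ amounts to exhibiting an increasing sequence of free subgroups $G_1\subseteq G_2\subseteq\cdots$ of $G$ with $\bigcup_m G_m=G$: here $k_m=\mathrm{rank}(G_m)$, and the product $l_m$ of the elementary divisors of $A_m$ is exactly the index $[S_m:G_m]$ of $G_m$ in its saturation $S_m:=\{g\in G_{m+1}:\exists N\neq 0,\ Ng\in G_m\}$ inside $G_{m+1}$. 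Thus it suffices to produce such a chain with $\mathrm{rank}(G_1)=n$, with the Schreier relation $\mathrm{rank}(G_{m+1})=l_m(\mathrm{rank}(G_m)-1)+1$, and with $\prod_m l_m=N_H$.

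First I would fix the factorisation of $N_H$. Choose a strictly increasing exhaustion $0=H_0\subsetneq H_1\subsetneq H_2\subsetneq\cdots$ of $H$ by finite subgroups with $\bigcup_m H_m=H$, refined so that each quotient $H_m/H_{m-1}$ is cyclic, and put $l_m:=[H_m:H_{m-1}]$. Since any finite subgroup of $H$ is contained in some $H_m$, one has $\sup_m\sharp H_m=\mathrm{lcm}\{\sharp K\}=N_H$, so the telescoping product gives $\prod_m l_m=\sup_m\sharp H_m=N_H$, as required. These $l_m$ will be the elementary divisor products, and the forced ranks become $\mathrm{rank}(G_m)=1+(n-1)\prod_{j<m}l_j$.

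Next I would build the chain recursively so that at each stage $\pi(G_m)=H_{m-1}$, the integral part $L_m:=G_m\cap\mathbb{Z}^{\oplus\infty}$ is a pure subgroup of $\mathbb{Z}^{\oplus\infty}$ of rank equal to $\mathrm{rank}(G_m)$, and $[S_m:G_m]=l_m$. To pass from $G_m$ to $G_{m+1}$ I would: (i) enlarge the torsion quotient from $H_{m-1}$ to $H_m$ by adjoining a lift $\tilde h_m\in G$ of a generator of $H_m/H_{m-1}$ — adjusting $\tilde h_m$ modulo $\mathbb{Z}^{\oplus\infty}$ so that its "numerator" $l_m\tilde h_m$, which already represents an element of $H_{m-1}=\pi(G_m)$, differs from an element of $G_m$ only by a vector lying in $L_m$; then $\tilde h_m$ lands in the saturation $S_m$ and contributes exactly the factor $l_m$; and (ii) append exactly $(l_m-1)(\mathrm{rank}(G_m)-1)$ fresh primitive integral generators, which simultaneously forces the Schreier rank and, being integral, independent, and chosen to keep the integral part pure, contributes nothing further to the saturation index. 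Running a diagonal enumeration, I would choose these fresh generators, and pre-place the numerator vectors needed at later stages, so as to capture every standard basis vector $e_i$ of $\mathbb{Z}^{\oplus\infty}$ and every future numerator. Exhaustion then follows: given $g\in G$, its class lies in some $H_{m-1}=\pi(G_m)$, so $g$ differs from an element of $G_m$ by a vector of $\mathbb{Z}^{\oplus\infty}$, which itself lies in some $L_{m'}\subseteq G_{m'}$; hence $\bigcup_m G_m=G$.

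The main obstacle is the bookkeeping in this last step, forced by the rigidity of the Schreier relation. A pure subgroup of equal rank inside a free abelian group is the whole group, so the rank cannot be increased without simultaneously creating saturation; fresh free generators therefore cannot be added "for free" but must be bundled with an enlargement of the torsion quotient. One must check that the two prescribed counts — the index $l_m$ and the rank increment $(l_m-1)(\mathrm{rank}(G_m)-1)$ — are mutually consistent, that the integral part stays pure so that the saturation index at each stage remains exactly $l_m$, and that along the diagonal schedule every $e_i$ and every numerator is eventually included. Verifying that these competing requirements can always be met at once — using that each stage supplies at least one, and eventually arbitrarily many, fresh generators, so that sweeping out $\mathbb{Z}^{\oplus\infty}$ never conflicts with enlarging $H$ — is the technical heart of the argument.
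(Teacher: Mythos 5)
Your proposal follows essentially the same route as the paper's proof: there too, $G$ is realized as the union of an increasing chain of finitely generated free abelian subgroups of $G$, built by fixing a strictly increasing exhaustion of $\pi(G)$ by finite subgroups with cyclic successive quotients (generated by classes $\pi(k_j^{-1}y_j)$), pre-placing the numerators $y_j$ together with an enumeration $\{x_i\}$ of $\mathbb{Z}^{\oplus\infty}$ inside the pure subgroup $P(\cdots)\subset\mathbb{Z}^{\oplus\infty}$, and choosing the cut-off $r_j$ of the $x_i$'s so the ranks obey the Schreier relation. The bookkeeping you leave as the ``technical heart'' is exactly what the paper's purification operator $P(\cdot)$ and the choice of $r_j$ discharge: purity of the integral part forces $(H_{j+1}/H_j)^{\rm tor}$ to be cyclic, generated by the newly adjoined fraction $k_j^{-1}y_j$, so the elementary-divisor product is $l_j=m_j/m_{j-1}$, while exhaustion of $G$ is automatic because every $x_i$ and every $k_j^{-1}y_j$ is eventually included.
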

\begin{proof}
Fix an enumeration $\{x_n\}_n$ of elements of $\mathbb{Z}^{\oplus\infty}$.
Since the quotient $G/ \mathbb{Z}^{\oplus \infty}$ is an infinite torsion abelian group,
there are a sequence $(y_j)_j$ of elements of
$\mathbb{Z}^{\oplus\infty}$ and a sequence $(k_j)_j$ of natural numbers greater than $1$
such that the sequence $(\langle\pi (k_1^{-1}y_1), \ldots, \pi (k_j^{-1}y_j)\rangle)_j$ of subgroups of $\pi(G)$ is strictly increasing and the union of the sequence coincides with $\pi(G)$.
Note that the set $\{x_j, k_j^{-1}y_j: j\in\mathbb{N}\}$ generates $G$.
For each $j\in \mathbb{N}$, set $m_j:=\sharp(\langle\pi (k_1^{-1}y_1), \ldots, \pi (k_j^{-1}y_j)\rangle).$
Take $r_1\in\mathbb{N}$
such that the rank of $\langle y_1, x_1, \ldots, x_{r_1}\rangle$ is equal to $n$
and set $H_1:=P(y_1, x_1, \ldots, x_{r_1})$.
Note that since any subgroup of $\mathbb{Z}^{\oplus\infty}$ is free abelian \cite[Vol.I,\ Theorem 14.5]{Fuc},
a subgroup of $\mathbb{Z}^{\oplus\infty}$ of rank $k$ is isomorphic to $\mathbb{Z}^k$
for any $0\leq k \leq \infty$.
In particular $H_1$ is isomorphic to $\mathbb{Z}^n$.
Next take $r_2\in\mathbb{N}$ such that $r_2\geq r_1$ and
the rank of 
$\langle y_1, y_2, x_1, \ldots, x_{r_2}\rangle$ is equal to $m_1(n-1)+1$.
Set the subgroup $H_2$ of $G$ to be
$\langle k_1^{-1}y_1, P(y_1, y_2, x_1, \ldots, x_{r_2})\rangle$.
Then $H_2$ contains $H_1$, the rank of $H_2$ is $m_1(n-1)+1$, and $H_2$ is finitely generated. Hence $H_2$ is isomorphic to $\mathbb{Z}^{m_1(n-1)+1}$.
We will determine the product $l_1$ of all elementary divisors of the inclusion map $\iota_1\colon H_1\rightarrow H_2$.
This is equal to the order of $(H_2/H_1)^{\rm tor}$.
By definition of $H_1$ and $H_2$, the group $(H_2/H_1)^{\rm tor}$
is generated by the image of $k_1^{-1}y_1$.
Since $\langle k_1^{-1}y_1\rangle \cap H_1=\langle k_1^{-1}y_1\rangle \cap \mathbb{Z}^{\oplus\infty}$,
the group $(H_2/H_1)^{\rm tor}$ is isomorphic to $\langle \pi(k_1^{-1}y_1) \rangle$.
Hence we have $l_1=m_1$.
Next take $r_3\in\mathbb{N}$ such that
the rank of the group $\langle y_1, y_2, y_3, x_1, \ldots, x_{r_3}\rangle$ is equal to $m_2(n-1)+1$
and set $H_3:=\langle k_1^{-1}y_1, k_2^{-1}y_2, P(y_1, y_2, y_3, x_1, \ldots, x_{r_3}) \rangle$.
Note that since $m_2(n-1)-m_1(n-1)=m_1(l_1-1)(n-1)>1$, we must have $r_3> r_2$.
It is clear from the definition that $H_3$ contains $H_2$.
By a similar reason to above, the group $H_3$ is isomorphic to $\mathbb{Z}^{m_2(n-1)+1}$.
We determine the product $l_2$ of all elementary divisors of the inclusion map $\iota_2 \colon H_2\rightarrow H_3$.
By the same reason as above, it is equal to the order of $(H_3/H_2)^{\rm tor}$.
It is clear that $(H_3/H_2)^{\rm tor}$ is generated by the image of $k_2^{-1}y_2$.
This with the equality $\langle k_2^{-1}y_2\rangle\cap H_2 =\langle k_2^{-1}y_2\rangle \cap \langle k_1^{-1}y_1, \mathbb{Z}^{\oplus\infty}\rangle$ shows that
the group $(H_3/H_2)^{\rm tor}$ is isomorphic to $\langle\pi (k_1^{-1}y_1), \pi(k_2^{-1}y_2)\rangle/\langle \pi(k_1^{-1}y_1)\rangle$.
This shows the equation $l_2=m_2/m_1$.
Continuing this process inductively, we obtain an increasing sequence
$(H_j)_j$ of subgroups of $G$ which has the following properties.
\begin{itemize}
\item The union of $H_j$'s is equal to $G$.
\item The group $H_j$ is isomorphic to $\mathbb{Z}^{m_{j-1}(n-1)+1}$ for each $j\in \mathbb{N}$.
Here we put $m_0=1$ for convenience.
\item The product $l_j$ of all elementary divisors of the inclusion map
$\iota_j \colon H_j \rightarrow H_{j+1}$ is equal to $m_j/m_{j-1}$ for each $j\in \mathbb{N}$.
\end{itemize}
By the last property, we have
$\prod_j l_j={\rm lcm}\{m_j : j \in \mathbb{N}\}$,
which is equal to $N_{G/\mathbb{Z}^{\oplus\infty}}$.
Therefore the inductive system $(H_m, \iota_m)_m$ satisfies the desired properties.
\end{proof}
Now we prove the main theorem.
\begin{Thm}\label{Thm:Inf}
Let $G$ be a subgroup of $\mathbb{Q}^{\oplus\infty}$ which contains $\mathbb{Z}^{\oplus\infty}$ as a subgroup
of infinite index.
Set $\tilde{G}:=G/\mathbb{Z}^{\oplus\infty}$.
Let $2\leq n<\infty$ and $k$ be an integer.
Then there exists an amenable minimal Cantor $\mathbb{F}_n$-system that satisfies the following properties.
\begin{itemize}
\item The pair of $K_0$-group and the unit $[1]_0$ is isomorphic to 
\[\left(G\oplus\Lambda((n-1)N_{\tilde{G}}), 0\oplus [k(n-1)^{-1}]\right).\]
\item The $K_1$-group is isomorphic to $\mathbb{Z}^{\oplus \infty}$.
\item The crossed product is a Kirchberg algebra in the UCT class.
\end{itemize}
\end{Thm}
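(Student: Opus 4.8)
The plan is to realize the desired system as a projective limit of the elementary diagonal systems $\beta_n\times\sigma_m$ of Theorem~\ref{Thm:bou}, arranged along a decreasing tower of finite-index subgroups of $\mathbb{F}_n$, and then to extract the $K$-theory as an inductive limit. First I would invoke Lemma~\ref{Lem:Ind} to write $G=\varinjlim(\mathbb{Z}^{k_m},\iota_m)$ with injective connecting maps $\iota_m\colon\mathbb{Z}^{k_m}\to\mathbb{Z}^{k_{m+1}}$, where $k_1=n$, $k_{m+1}=l_m(k_m-1)+1$, and $\prod_m l_m=N_{\tilde{G}}$. By Smith normal form each $\iota_m$ factors as $\iota_m=Q_mA_m$ with $A_m\colon\mathbb{Z}^{k_m}\to\mathbb{Z}^{k_m}$ injective of elementary-divisor product $l_m$ and $Q_m\colon\mathbb{Z}^{k_m}\to\mathbb{Z}^{k_{m+1}}$ a left-invertible inclusion; this is precisely the data consumed by Lemma~\ref{Lem:Conn}.

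Next I would build the tower inductively. Starting from $\Gamma_1=\mathbb{F}_n$ and feeding $(A_1,Q_1)$ into Lemma~\ref{Lem:Conn}, I obtain a finite-index subgroup $\Gamma_2$ of index $l_1$ (so $\Gamma_2\cong\mathbb{F}_{k_2}$) for which the inclusion $C(\partial\mathbb{F}_n)\rtimes\mathbb{F}_n\hookrightarrow C(\partial\mathbb{F}_n\times(\mathbb{F}_n/\Gamma_2))\rtimes\mathbb{F}_n$ has $K_0$-free part $\iota_1$ and injective torsion part. Identifying the target with a matrix amplification of the boundary algebra $C(\partial\Gamma_2)\rtimes\Gamma_2$ via Theorem~\ref{Thm:bou} and iterating with $(A_2,Q_2),(A_3,Q_3),\dots$, I get a decreasing tower $\mathbb{F}_n=\Gamma_1\supset\Gamma_2\supset\cdots$ with $[\Gamma_m:\Gamma_{m+1}]=l_m$ and $\mathrm{rank}\,\Gamma_m=k_m$. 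Writing $X_m:=\partial\mathbb{F}_n\times(\mathbb{F}_n/\Gamma_m)$ with the action $\beta_n\times\sigma_m$, the quotient maps $\mathbb{F}_n/\Gamma_{m+1}\to\mathbb{F}_n/\Gamma_m$ make $(X_m)_m$ an equivariant projective system, and I set $X:=\varprojlim X_m$. I would then verify that $X$ is an amenable minimal Cantor $\mathbb{F}_n$-system: it is amenable because it factors onto the amenable system $\partial\mathbb{F}_n=X_1$; it is minimal because any nonempty closed invariant $Y\subseteq X$ has full image $X_m$ under each coordinate projection $p_m$ (each $\beta_n\times\sigma_m$ being minimal by Theorem~\ref{Thm:bou}), whence $Y=\varprojlim p_m(Y)=X$; and it is a Cantor set because an infinite minimal system on a compact metrizable totally disconnected space has no isolated points.

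The $K$-theory is then read off from $C(X)\rtimes\mathbb{F}_n=\varinjlim(C(X_m)\rtimes\mathbb{F}_n)$ by continuity of $K$-theory. On free parts the connecting maps are the $\iota_m$, so their limit is $G$; on torsion parts they are the injective maps $\mathbb{Z}_{k_m-1}\to\mathbb{Z}_{k_{m+1}-1}$ with $k_{m+1}-1=l_m(k_m-1)$, whose limit is the subgroup of $\mathbb{Q}/\mathbb{Z}$ of elements of order dividing $(n-1)\prod_m l_m$, namely $\Lambda((n-1)N_{\tilde{G}})$. Thus there is a short exact sequence $0\to\Lambda((n-1)N_{\tilde{G}})\to K_0(C(X)\rtimes\mathbb{F}_n)\to G\to0$, and a parallel torsion-free bookkeeping for $K_1$ (where $K_1$ of each boundary algebra is free of rank $k_m$ and the ranks tend to infinity since $\prod_m l_m=N_{\tilde{G}}$ is infinite) yields $K_1\cong\mathbb{Z}^{\oplus\infty}$. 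The Kirchberg and UCT properties follow from the standard package: amenability gives nuclearity and the UCT, minimality together with amenability forces the $\mathbb{F}_n$-action to be topologically free and hence the crossed product simple, and pure infiniteness passes to the simple inductive limit from the building blocks $\mathbb{M}_{\bullet}(C(\partial\mathbb{F}_{k_m})\rtimes\mathbb{F}_{k_m})$, which are matrix algebras over Kirchberg algebras.

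The hard part will be the two remaining $K_0$ issues. First, since $G$ is torsion-free but not free, the displayed extension does not split for formal reasons, and I would have to split it by hand: here the third conclusion of Lemma~\ref{Lem:Conn} --- that the images of the free generators acquire torsion of order at most $2$ --- is the essential input, and the task is to absorb these order-$2$ defects consistently through the inductive system by a cofinal change of free bases (and possibly by refining the choice of the $Q_m$ and the bases $W$ so that the defects cancel against the injective torsion tower). Controlling this $2$-torsion obstruction uniformly is the crux of the argument. Second, I must place the unit as $0\oplus[k(n-1)^{-1}]$; the class $[1]_0$ is torsion of order $n-1$ at the first stage and so lands in the $\Lambda$-summand, and the plain construction computes its image to be $[(n-1)^{-1}]$ (the case $k=1$). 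Realizing an arbitrary integer $k$ requires an additional modification of the initial datum --- an $\mathbb{M}_k$-type amplification compatible with the crossed-product structure, together with an automorphism re-identifying $\Lambda((n-1)N_{\tilde{G}})$ --- which I would treat as a final, separate step.
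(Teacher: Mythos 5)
Your construction coincides with the paper's (Lemma~\ref{Lem:Ind} plus a Smith-normal-form factorization $\iota_m=Q_mA_m$ feeding Lemma~\ref{Lem:Conn}, the tower of finite-index subgroups, the projective limit, and the inductive-limit $K$-theory bookkeeping, with a skyscraper-type amplification for general $k$). But the step you yourself identify as the crux --- splitting the extension $0\to\Lambda((n-1)N_{\tilde{G}})\to K_0\to G\to 0$ --- is left unproven, and the strategy you sketch for it (arranging the order-$2$ defects to \emph{cancel} by a cofinal change of free bases or a refined choice of the $Q_m$) is not what the argument needs and is not obviously implementable. The paper cancels nothing: it considers the subgroup $H$ of $K_0(C(X)\rtimes\mathbb{F}_n)$ generated by the images of all the classes $[\chi_{\Theta(w_m;W_m;\mathbb{F}_n)}\otimes\delta_e]_0$ together with the (at most one) subgroup generated by elements of order $2$. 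By the last condition of Lemma~\ref{Lem:Conn}, propagated through the tower, $H^{\rm tor}$ has exponent at most $2$; since a torsion subgroup is pure, Szele's theorem (Fuchs, Vol.~I, Prop.~27.1) makes $H^{\rm tor}$ a direct summand, $H=H^{\rm tor}\oplus H'$. Because $H$ surjects onto $K_0/K_0^{\rm tor}\cong G$ while $H'\cap K_0^{\rm tor}=0$, the torsion-free summand $H'$ is a lift of $G$ into $K_0$, which yields $K_0\cong G\oplus\Lambda((n-1)N_{\tilde{G}})$ with $[1]_0\mapsto 0\oplus[(n-1)^{-1}]$. So the missing idea is a bounded-torsion splitting theorem applied to a generator-controlled subgroup, not a cancellation scheme; without some such argument your proof does not close, since for $G$ torsion-free but non-free the extension indeed has no formal reason to split.

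There is a second, smaller gap: your $K_1$ argument (``ranks tend to infinity, hence $\mathbb{Z}^{\oplus\infty}$'') is insufficient, because an inductive limit of groups $\mathbb{Z}^{k_m}$ along injective connecting maps of unbounded rank need not be free abelian --- the free part of your own $K_0$ is exactly such a limit and equals $G$. The paper sidesteps computing the $K_1$ connecting maps: by the Pimsner--Voiculescu sequence, $K_1(C(X)\rtimes\mathbb{F}_n)\cong\ker(\eta_\gamma)$ is a subgroup of the free abelian group $C(X,\mathbb{Z})^{\oplus S}$, hence itself free abelian, and the maps $K_1(C(X_m)\rtimes\mathbb{F}_n)\to K_1(C(X)\rtimes\mathbb{F}_n)$ are injective (each $\eta_{\gamma_m}$ is a restriction of $\eta_\gamma$), so the rank is infinite; a countable free abelian group of infinite rank is $\mathbb{Z}^{\oplus\infty}$. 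You need this (or an explicit control of the $K_1$ connecting maps) to finish. Finally, for general $k$ the explicit skyscraper system on $X\times\{1,\ldots,k\}$ (one generator cycles the finite fiber and applies $\gamma_G(s)$ when wrapping around, the others act fiberwise) has crossed product $\mathbb{M}_k(C(X)\rtimes\mathbb{F}_n)$, so the unit becomes $k\bigl(0\oplus[(n-1)^{-1}]\bigr)=0\oplus[k(n-1)^{-1}]$ automatically; no automorphism re-identifying $\Lambda((n-1)N_{\tilde{G}})$ is needed.
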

\begin{proof}
First we consider the case $k=1$.
Let $G$ be a group as stated.
Take an inductive system $(\mathbb{Z}^{k_m}, A_m)_m$ as in Lemma \ref{Lem:Ind}
for the case of $n$ and $G$.
Fix an enumeration of $S$.
By Lemma \ref{Lem:Conn}, there is an enumerated subset $W_1$ of $\mathbb{F}_n$
such that the equation $\sharp W_1=(n-1)l_1+1=k_2$ holds,
the subset $W_1$ is a free basis of a finite index subgroup $\Gamma_1$ of $\mathbb{F}_n$,
and the $K_0$-map 
$h_1:=K_0(j_1)$
of the canonical inclusion
\[j_1\colon C(\partial\mathbb{F}_n)\rtimes\mathbb{F}_n\rightarrow C(\partial\mathbb{F}_n\times(\mathbb{F}_n/\Gamma_1))\rtimes\mathbb{F}_n\]
satisfies the following conditions.
\begin{itemize}
\item With respect to the bases $([\chi_{\Omega(s)}]_0:s\in S)$ and $([\chi_{\Theta(w_1; W_1;\mathbb{F}_n)}\otimes\delta_e]_0: w_1\in W_1)$,
the free part of $h_1$ is presented by $A_1$.
\item The image of $\bigoplus_{s\in S}\mathbb{Z}[\chi_{\Omega(s)}]_0$ under $h_1$
is contained in
\[\left( \bigoplus_{w_1\in W_1} \mathbb{Z}[\chi_{\Theta(w_1; W_1;\mathbb{F}_n)}]_0 \right)\oplus \Lambda_1\]
where $\Lambda_1$ is the subgroup of the torsion part generated by elements of order $2$.
\item The torsion part of $h_1$ is injective.
\end{itemize}
From Lemma \ref{Lem:Conn}
and the proof of Theorem \ref{Thm:bou},
we can further take an enumerated subset $W_2$ of $\Gamma_1$ such that
the equation $\sharp W_2=(\sharp W_1-1)l_2+1=k_3$ holds,
the subset $W_2$ is a free basis of a finite index subgroup $\Gamma_2$ of $\Gamma_1$,
and the $K_0$-map $h_2:=K_0(j_2)$ of the canonical inclusion
\[j_2\colon C(\partial\mathbb{F}_n\times(\mathbb{F}_n/\Gamma_1))\rtimes\mathbb{F}_n\rightarrow C(\partial\mathbb{F}_n\times(\mathbb{F}_n/\Gamma_2))\rtimes\mathbb{F}_n\]
satisfies the following conditions.
\begin{itemize}
\item With respect to the bases $([\chi_{\Theta(w_1; W_1;\mathbb{F}_n)}\otimes\delta_e]_0: w_1\in W_1)$
and $([\chi_{\Theta(w_2; W_2;\mathbb{F}_n)}\otimes\delta_e]_0: w_2\in W_2)$,
the free part of $h_2$ is presented by $A_2$.
\item The image of $\bigoplus_{w_1\in W_1}\mathbb{Z}[\chi_{\Theta(w_1; W_1;\mathbb{F}_n)}\otimes\delta_e]_0$ under $h_2$ is contained in
\[\left( \bigoplus_{w_2\in W_2} \mathbb{Z}[\chi_{\Theta(w_2; W_2;\mathbb{F}_n)}\otimes\delta_e]_0 \right) \oplus \Lambda_2,\]
where $\Lambda_2$ is the subgroup of the torsion part generated by elements of order $2$.
\item The torsion part of $h_2$ is injective.
\end{itemize}
Note that we have $h_1(\Lambda_1)\subset\Lambda_2$ and each $\Lambda_i$
is either trivial or isomorphic to $\mathbb{Z}_2$.

Continuing this process inductively, we finally obtain a sequence $(W_m)_m$ of enumerated subsets of $\mathbb{F}_n$
such that each $W_m$ is a free basis of a finite index subgroup of $\Gamma_{m-1}:=\langle W_{m-1}\rangle$
(here we set $W_0:=S$ for convenience),
the equation $\sharp W_m=k_{m+1}$ holds,
and for each $m$, the $K_0$-map
$h_m:=K_0(j_m)$
of the canonical inclusion
\[j_m\colon C(\partial\mathbb{F}_n\times(\mathbb{F}_n/\Gamma_{m-1}))\rtimes\mathbb{F}_n\rightarrow C(\partial\mathbb{F}_n\times(\mathbb{F}_n/\Gamma_m))\rtimes\mathbb{F}_n\]
satisfies the analogues of above three conditions.

For each $m$ we denote by $\alpha_m$ the canonical action of $\mathbb{F}_n$ on $Y_m:=\mathbb{F}_n/\Gamma_m$.
Denote by $\alpha\colon \mathbb{F}_n\curvearrowright Y$ the projective limit of the projective system $(\alpha_m\colon \mathbb{F}_n\curvearrowright Y_m)_m$.
Set $X_m:=\partial\mathbb{F}_n\times Y_m$, $X:=\partial\mathbb{F}_n\times Y$, $\gamma_m:=\beta\times\alpha_m$, and $\gamma:=\beta\times\alpha$.
Note that by definition $\gamma$ is the projective limit of $(\gamma_m)_m$.
Then since both amenability and minimality passes to projective limits,
the $\gamma$ is an amenable minimal Cantor system.
Since the UCT class and the class of all Kirchberg algebras are closed under inductive limits,
the crossed product of $\gamma$ is a Kirchberg algebra in the UCT class.
We next show that the Cantor $\mathbb{F}_n$-system $\gamma$ has the desired $K$-theory.

First we determine the pair $(K_0(C(X)\rtimes\mathbb{F}_n), [1]_0)$.
From our construction, we have an isomorphism
\[K_0(C(X)\rtimes\mathbb{F}_n)\cong\varinjlim(\mathbb{Z}^{k_m}\oplus \mathbb{Z}_{k_m-1}, h_m).\]
This shows that the group $K_0(C(X)\rtimes\mathbb{F}_n)/K_0(C(X)\rtimes\mathbb{F}_n)^{\rm tor}$ is isomorphic to $G$ and
the group $K_0(C(X)\rtimes\mathbb{F}_n)^{\rm tor}$ is isomorphic to $\Lambda((n-1)N_{\tilde{G}})$.
(Note that $k_{m+1}-1=(n-1)l_1\cdots l_m$ for $m\in \mathbb{N}$.)
We show that the torsion subgroup $K_0(C(X)\rtimes\mathbb{F}_n)^{\rm tor}$ is a direct summand of $K_0(C(X)\rtimes\mathbb{F}_n)$.
This proves that the group $K_0(C(X)\rtimes\mathbb{F}_n)$ is isomorphic to the expected group.
To see this, consider the subgroup $H$ of $K_0(C(X)\rtimes\mathbb{F}_n)$
generated by elements of order $2$, which exists at most one,
and the images of $[\chi_{\Theta(w_m; W_m;\mathbb{F}_n)}\otimes\delta_e]_0$
for all $m\in \mathbb{N}$ and $w_m\in W_m$.
Then the torsion part of $H$ is either trivial or of order $2$.
In both cases, Szele's Theorem \cite[Vol.\ I.\ Prop.\ 27.1]{Fuc} shows that the $H^{\rm tor}$ is a direct summand of $H$.
This shows that the torsion free quotient $K_0(C(X)\rtimes\mathbb{F}_n)/K_0(C(X)\rtimes\mathbb{F}_n)^{\rm tor}$
is lifted to $H\subset K_0(C(X)\rtimes\mathbb{F}_n)$ by homomorphism, as desired.
Furthermore, by the construction, the above isomorphism maps the unit $[1]_0$
to $0\oplus[(n-1)^{-1}]$.
This shows that the $\gamma$ has the desired $K_0$-group.

Next we determine the $K_1$-group.
By the Pimsner--Voiculescu exact sequence for free groups \cite{PV},
for any Cantor $\mathbb{F}_n$-system $\tau\colon \mathbb{F}_n\curvearrowright Z$, we obtain the isomorphism
$K_1(C(Z)\rtimes\mathbb{F}_n)\cong\ker(\eta_{\tau})$,
where $\eta_{\tau}\colon C(Z, \mathbb{Z})^{\oplus S}\rightarrow C(Z, \mathbb{Z})$ is the group homomorphism given by
\[(f_s)_{s \in S} \in C(Z, \mathbb{Z})^{\oplus S} \mapsto \sum_{s\in S}(f_s-f_s\circ\tau(s^{-1})) \in C(Z, \mathbb{Z}).\]

From the above isomorphism and the functoriality of the Pimsner--Voiculescu exact sequence,
the canonical map $K_1(C(X_m)\rtimes\mathbb{F}_n)\rightarrow K_1(C(X)\rtimes\mathbb{F}_n)$
is injective for each $m$ (since $\eta_{\gamma_{m}}$ is identified with the restriction of $\eta_{\gamma}$).
The isomorphism
\[C(X_m)\rtimes\mathbb{F}_n\cong\mathbb{M}_{L_m}(C(\partial\mathbb{F}_{k_m})\rtimes\mathbb{F}_{k_m}),\]
shows that the rank of $K_1(C(X_m)\rtimes\mathbb{F}_n)$ is $k_m$.
Here $L_m:=l_1\cdots l_m$.
This shows that the rank of $K_1(C(X)\rtimes\mathbb{F}_n)$ must be infinite.
Since the group $K_1(C(X)\rtimes\mathbb{F}_n)$ is a subgroup of the free abelian group $C(X, \mathbb{Z})^{\oplus S}$, it is free abelian \cite[Vol.I,\ Theorem 14.5]{Fuc}.
This shows that the $K_1$-group is isomorphic to $\mathbb{Z}^{\oplus\infty}$. 

To end the proof for general case, we need the skyscraper construction.
For $G$ as above, let $\gamma_G\colon \mathbb{F}_n \curvearrowright X$ be the Cantor $\mathbb{F}_n$-system
constructed in the above for the case of $G$.
Then for each natural number $k$, define the new Cantor $\mathbb{F}_n$-system $\widetilde{\gamma_G}^{(k)}$ as follows.
Set
$\widetilde{X}^{(k)}:=X\times\{1, \ldots, k\}$
and fix $s\in S$.
Then define a dynamical system $\widetilde{\gamma_G}^{(k)}$ of $\mathbb{F}_n$ on $\widetilde{X}^{(k)}$ by
\[\widetilde{\gamma_G}^{(k)}(s)(x, j):=\left\{ \begin{array}{ll}
(x, j+1) & {\rm if\ } j\neq k \\
(\gamma_G(s)(x), 1) & {\rm if\ } j=k\\
\end{array} \right.\]
and $\widetilde{\gamma_G}^{(k)}(t)(x, j):=(\gamma_G(t)(x), j)$ for the other $t\in S$.
Then by definition, $\widetilde{\gamma_G}^{(k)}$ is an amenable minimal Cantor system
and its crossed product is isomorphic to the tensor product of $\mathbb{M}_k$ and the crossed product of $\gamma_G$.
This ends the proof.
\end{proof}

\begin{Cor}\label{Cor:Inf}
Every free group admits continuously many amenable minimal Cantor systems
which are distinguished by the $K$-groups.
\end{Cor}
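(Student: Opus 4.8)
The plan is to feed a continuum-sized family of groups into Theorem \ref{Thm:Inf} and to separate the resulting systems by the torsion subgroups of their $K_0$-groups. Throughout I fix $\mathbb{F}_n$ with $2\leq n<\infty$ (the abelian case $\mathbb{F}_1=\mathbb{Z}$ being covered by the classical theory of Giordano--Putnam--Skau). Recall from Theorem \ref{Thm:Inf} that every subgroup $G\leq\mathbb{Q}^{\oplus\infty}$ containing $\mathbb{Z}^{\oplus\infty}$ with infinite index produces an amenable minimal Cantor $\mathbb{F}_n$-system whose $K_0$-group is $G\oplus\Lambda((n-1)N_{\tilde{G}})$, where $\tilde{G}=G/\mathbb{Z}^{\oplus\infty}$, and whose $K_1$-group is $\mathbb{Z}^{\oplus\infty}$. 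Since $G$ is torsion free, the torsion subgroup of this $K_0$-group is exactly $\Lambda((n-1)N_{\tilde{G}})$, and any isomorphism of abelian groups preserves the torsion subgroup. Using the recorded fact that $\Lambda(M)\cong\Lambda(M')$ holds if and only if $M=M'$, it therefore suffices to realize continuum many pairwise distinct supernatural numbers of the form $(n-1)N_{\tilde{G}}$.

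The concrete family I would use is as follows. Write $\mathbb{Z}^{\oplus\infty}=\bigoplus_{i\geq 1}\mathbb{Z}e_i$ and let $\mathcal{P}$ be the set of primes. For an infinite subset $S\subseteq\mathcal{P}$, enumerate $S=\{q_1<q_2<\cdots\}$ and set $G_S:=\bigoplus_{i\geq 1}\mathbb{Z}[q_i^{-1}]e_i\leq\mathbb{Q}^{\oplus\infty}$, where $\mathbb{Z}[q_i^{-1}]\subset\mathbb{Q}$ is the localization consisting of rationals whose denominators are powers of $q_i$. Then $G_S$ contains $\mathbb{Z}^{\oplus\infty}$, and the quotient $\tilde{G_S}=\bigoplus_i\mathbb{Z}(q_i^\infty)$ (a direct sum of Pr\"ufer groups) is an infinite torsion group, so the index is infinite and $G_S$ is admissible for Theorem \ref{Thm:Inf}. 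A direct inspection of the finite subgroups of $\tilde{G_S}$ shows $N_{\tilde{G_S}}(q)=\infty$ for $q\in S$ and $N_{\tilde{G_S}}(q)=0$ for $q\notin S$; I abbreviate this supernatural number by $N_S$. Applying Theorem \ref{Thm:Inf} to each $G_S$ then yields an amenable minimal Cantor $\mathbb{F}_n$-system $\gamma_S$ whose $K_0$-torsion part is $\Lambda((n-1)N_S)$.

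It remains to check that distinct $S$ give non-isomorphic invariants. For a prime $q$ the $q$-component of $(n-1)N_S$ equals $\infty$ precisely when $q\in S$: if $q\mid n-1$ the finite contribution of $n-1$ is absorbed but never produces an infinite exponent, and if $q\notin S$ the component stays finite. Hence $S$ is recovered from $(n-1)N_S$, so $S\mapsto(n-1)N_S$ is injective, and consequently $S\mapsto\Lambda((n-1)N_S)$ is injective up to isomorphism. The torsion subgroups of $K_0(C(X_S)\rtimes\mathbb{F}_n)$ are therefore pairwise non-isomorphic, whence the $K_0$-groups themselves are, and the family $\{\gamma_S\}$ (indexed by the continuum many infinite subsets of the countable set $\mathcal{P}$) consists of continuum many amenable minimal Cantor $\mathbb{F}_n$-systems distinguished by their $K$-groups. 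In truth the whole content is carried by Theorem \ref{Thm:Inf}; the only point demanding care is the choice of a genuinely continuum-sized and \emph{computable} family of invariants. This is exactly why I route the argument through the torsion parts, classified cleanly by supernatural numbers, rather than through the torsion-free free parts $G$, whose isomorphism problem is notoriously intractable. Given that design choice, there is no serious obstacle.
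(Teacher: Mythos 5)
Your proposal is correct and is essentially the paper's (implicit) argument: the paper states Corollary \ref{Cor:Inf} as an immediate consequence of Theorem \ref{Thm:Inf}, the point being that continuum many admissible $G$ yield continuum many distinct invariants, and your device of indexing by infinite sets of primes and separating the systems via the torsion subgroups $\Lambda\bigl((n-1)N_{\tilde{G}}\bigr)$ of $K_0$ (rather than via the intractable torsion-free parts) is precisely the mechanism the paper itself deploys in the proof of Theorem \ref{Thm:free}, where systems are distinguished by $\Lambda\bigl((n-1)q_\infty\bigr)$ with $q_\infty=\prod_{q\in Q}q^{\infty}$ for subsets $Q$ of primes. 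Your verification that the primes with infinite exponent in $(n-1)N_S$ recover $S$, so that multiplication by $n-1$ causes no collisions, is the one point needing care and you handle it correctly.
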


Similar results also hold for virtually free groups.
\begin{Thm}\label{Thm:Infv}
Let $\Gamma$ be a virtually $\mathbb{F}_n$ group.
Let $k$ be a natural number such that $\Gamma$ has
a subgroup $\Lambda$ of index $k$ isomorphic to $\mathbb{F}_n$.
Then for each triple $(G_0, u, G_1)$ given in Theorem $\ref{Thm:Inf}$ for the case $n$,
there exists an amenable minimal topologically free Cantor $\Gamma$-system that satisfies the following properties.
\begin{itemize}
\item The $K$-theory $(K_0, [1]_0, K_1)$ is isomorphic to $(G_0, ku, G_1)$.
\item The crossed product is a Kirchberg algebra in the UCT class.
\end{itemize}
In particular, the same conclusion in Corollary $\ref{Cor:Inf}$ also holds for virtually $\mathbb{F}_n$ groups.
\end{Thm}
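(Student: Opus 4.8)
The plan is to realize the desired $\Gamma$-system as the system induced up to $\Gamma$ from the $\Lambda$-system supplied by Theorem~\ref{Thm:Inf}; this is precisely the induced-action counterpart of the construction in Theorem~\ref{Thm:bou}, with the boundary action replaced by the more general base system. First I would fix a subgroup $\Lambda\cong\mathbb{F}_n$ of index $k$ and apply Theorem~\ref{Thm:Inf} to obtain an amenable minimal Cantor $\Lambda$-system $\gamma\colon\Lambda\curvearrowright X$ with $(K_0,[1]_0,K_1)\cong(G_0,u,G_1)$; note that $\gamma$ is automatically topologically free, since every amenable minimal $\mathbb{F}_n$-system is. After choosing coset representatives $g_1=e,g_2,\ldots,g_k$ of $\Gamma/\Lambda$, I would form the induced system $\widetilde\gamma=\mathrm{Ind}_\Lambda^\Gamma\gamma$ on $\widetilde X:=\bigsqcup_{i=1}^k\{g_i\}\times X$, with $g.(g_i,x)=(g_j,\gamma(\lambda).x)$ whenever $gg_i=g_j\lambda$, where $g_j$ is the chosen representative of $gg_i\Lambda$ and $\lambda\in\Lambda$. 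Being a finite disjoint union of Cantor sets, $\widetilde X$ is a Cantor set.

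Next I would identify the crossed product. By Green's imprimitivity theorem~\cite{Gre}---or, what is the same, by running the explicit covariant representation from the proof of Theorem~\ref{Thm:bou} verbatim with $\partial\Gamma$ replaced by $X$ and $\Gamma_0$ by $\Lambda$---one obtains $C(\widetilde X)\rtimes\Gamma\cong\mathbb{M}_k(C(X)\rtimes\Lambda)$. Morita invariance then gives $K_0\cong G_0$ and $K_1\cong G_1$, while the unit of the matrix algebra corresponds to $k[1]_0=ku$, so that the $K$-theory is $(G_0,ku,G_1)$. Since $C(X)\rtimes\Lambda$ is a Kirchberg algebra in the UCT class and both properties are preserved by tensoring with $\mathbb{M}_k$, the crossed product $C(\widetilde X)\rtimes\Gamma$ is again a Kirchberg algebra in the UCT class.

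Amenability of $\widetilde\gamma$ then follows from nuclearity of this crossed product via~\cite{Ana0} (alternatively, directly, by pushing the approximating maps $x\mapsto\mu_n^x\in{\rm Prob}(\Lambda)$ forward along $\lambda\mapsto g_i\lambda$ and estimating fiberwise). Minimality is immediate: a $\Gamma$-invariant closed subset meets the $\Lambda$-invariant fiber $\{e\}\times X$ in $\emptyset$ or $X$ by minimality of $\gamma$, and $\Gamma$ permutes the clopen fibers transitively, so the subset is $\emptyset$ or $\widetilde X$.

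The step I expect to be the main obstacle is topological freeness, since here the base system $\gamma$ is only topologically free (not free) and $\Lambda$ need not be normal. I would argue fiber by fiber. Fix $g\neq e$ and a fiber $\{g_i\}\times X$. If $g$ moves the coset $g_i\Lambda$, then $g$ carries this fiber off itself, so $g.y\neq y$ on the whole open fiber. If $g$ fixes $g_i\Lambda$, then $\lambda_0:=g_i^{-1}gg_i$ lies in $\Lambda$ and is nontrivial (as conjugation is injective and $g\neq e$), and $g$ acts on the fiber as $\gamma(\lambda_0)$; by topological freeness of $\gamma$, the set on which $\gamma(\lambda_0)$ moves points is dense in the fiber. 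Since the fibers are clopen, $\{y:g.y\neq y\}$ is dense in $\widetilde X$, proving topological freeness. Finally, letting $G$ range over the continuum of admissible groups, the torsion-free part $G$ of $K_0$ distinguishes continuously many such systems, which yields the ``in particular'' assertion exactly as in Corollary~\ref{Cor:Inf}.
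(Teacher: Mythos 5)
Your proposal is correct and is essentially the paper's own proof: the paper likewise induces the $\Lambda$-system from Theorem \ref{Thm:Inf} up to $\Gamma$ (written as the quotient space $\Gamma\times_\Lambda X$ rather than via coset representatives) and applies Green's imprimitivity theorem to obtain $C(\Gamma\times_\Lambda X)\rtimes\Gamma\cong\mathbb{M}_k(C(X)\rtimes\Lambda)$, whence the invariant $(G_0,ku,G_1)$ and the Kirchberg/UCT property. Your fiberwise verifications of minimality, amenability, and topological freeness are precisely the details the paper leaves as ``we can check easily,'' and they are sound.
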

\begin{proof}
The claim follows from the induced construction of dynamical systems with Green's imprimitivity theorem.
For the convenience of the reader, we give the precise construction.
Let $\Gamma$ and $\Lambda$ be as above.
Take an amenable minimal Cantor $\Lambda$-system $\gamma\colon\Lambda\curvearrowright X$ such that
its crossed product is a Kirchberg algebra in the UCT class
and its
$K$-theory $(K_0, [1]_0, K_1)$ is isomorphic to $(G_0, u, G_1)$.
On the space $\Gamma\times X$, define the equivalence relation $\sim_\Lambda$ by
\[(g, x)\sim_\Lambda (h, y)\Longleftrightarrow\exists k\in \Lambda, (h,y)=(gk^{-1}, \gamma(k)(x)).\]
Define the space $\Gamma\times_\Lambda X$ to be
the quotient space of $\Gamma\times X$ by the equivalence relation $\sim_\Lambda$.
It is easy to check that $\Gamma\times_\Lambda X$ is the Cantor set.
Define an action $\widetilde{\gamma}$ of $\Gamma$ on $\Gamma\times_\Lambda X$ by
$\widetilde{\gamma}(g)([h, x]):=[gh, x]$.
Here $[h, x]$ denotes the equivalence class of $(h, x)$ under $\sim_\Lambda$.
By the definition of $\widetilde{\gamma}$ (with the corresponding properties of $\gamma$),
we can check easily that $\widetilde{\gamma}$ is minimal, amenable, and topologically free.

Define $\pi\colon\Gamma\times_\Lambda X\rightarrow \Gamma/\Lambda$ by
$[h, x]\mapsto h\Lambda$.
Then by the definition of $\sim_\Lambda$, $\pi$ is a (well-defined) $\Gamma$-equivariant quotient map.
Notice that $\pi^{-1}(e\Lambda)$ is $\Lambda$-equivariantly homeomorphic to $X$.
Now applying Green's imprimitivity theorem to $\pi$,
we get the isomorphism
\[C(\Gamma\times_\Lambda X)\rtimes_{\widetilde{\gamma}}\Gamma\cong\mathbb{M}_k(C(X)\rtimes_{\gamma}\Lambda).\]
(An isomorphism can be given by a similar way to that in Theorem \ref{Thm:bou}.)
This shows that the Cantor $\Gamma$-system $\widetilde{\gamma}$
has the desired properties.
\end{proof}
Combining Theorems \ref{Thm:Inf} and \ref{Thm:Infv}, we obtain the following decomposition theorem.
\begin{Cor}\label{Cor:Dec}
For a torsion free abelian group $G$ of infinite rank,
consider a Kirchberg algebra $A$ in the UCT class satisfying
$(K_0(A), [1]_0, K_1(A))\cong(G\oplus\mathbb{Q}/\mathbb{Z}, 0, \mathbb{Z}^{\oplus\infty})$.
Then for any virtually free group $\Gamma$, $A$ is decomposed
as the crossed product of an amenable minimal topologically free Cantor $\Gamma$-system.
\end{Cor}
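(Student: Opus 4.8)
The plan is to reduce the statement to a $K$-theoretic matching problem and then invoke the Kirchberg--Phillips classification theorem. Since $A$ is a Kirchberg algebra in the UCT class, it suffices to produce an amenable minimal topologically free Cantor $\Gamma$-system whose crossed product is a Kirchberg algebra in the UCT class with invariant $(K_0,[1]_0,K_1)\cong(G\oplus\mathbb{Q}/\mathbb{Z},0,\mathbb{Z}^{\oplus\infty})$; the two algebras are then abstractly isomorphic. For $\Gamma=\mathbb{F}_n$ this system will be supplied by Theorem \ref{Thm:Inf}, and for a general virtually free $\Gamma$ it will be obtained by applying Theorem \ref{Thm:Infv} to a finite index free subgroup $\Lambda\cong\mathbb{F}_n$ with $n\geq 2$ (such a subgroup exists, since a non-amenable finitely generated virtually free group cannot be virtually cyclic). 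I note first that the mere existence of the separable algebra $A$ forces $K_0(A)$ to be countable, so $G$ is a countable torsion free group of countably infinite rank.

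The substantial step is to realize $G$ in the special form required by Theorem \ref{Thm:Inf}. There the free part of the $K_0$-group is literally the chosen subgroup $G'\subset\mathbb{Q}^{\oplus\infty}$, while the torsion part is $\Lambda((n-1)N_{G'/\mathbb{Z}^{\oplus\infty}})$; the goal is thus to embed $G$ as such a $G'$ with $\Lambda((n-1)N_{G'/\mathbb{Z}^{\oplus\infty}})=\mathbb{Q}/\mathbb{Z}$. Because $\Lambda(N)=\mathbb{Q}/\mathbb{Z}$ holds precisely when $N=\prod_p p^{\infty}$, and because the finite factor $n-1$ cannot raise a finite prime exponent to infinity, this reduces to arranging that $G'/\mathbb{Z}^{\oplus\infty}$ has an element of order $p^{k}$ for every prime $p$ and every $k$. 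To achieve this, I would pick a maximal $\mathbb{Z}$-independent subset $\{g_i\}_{i\in\mathbb{N}}$ of $G$, so that $G_0:=\langle g_i\rangle\cong\mathbb{Z}^{\oplus\infty}$ and $G/G_0$ is torsion. Fixing an enumeration $(p_i,k_i)_{i\in\mathbb{N}}$ of all pairs consisting of a prime and a positive integer, set $F:=\bigoplus_i p_i^{k_i}\mathbb{Z}g_i$. Then $F\cong\mathbb{Z}^{\oplus\infty}$, the subgroup $G_0/F\cong\bigoplus_i\mathbb{Z}_{p_i^{k_i}}$ of $G/F$ has finite subgroups of every prime power order, and $G/F$ is torsion of infinite order. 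Choosing an isomorphism of $F$ onto the standard lattice and extending it to $G\otimes\mathbb{Q}\cong\mathbb{Q}^{\oplus\infty}$ embeds $G$ as a group $G'\supset\mathbb{Z}^{\oplus\infty}$ of infinite index with $G'\cong G$ and $N_{G'/\mathbb{Z}^{\oplus\infty}}=\prod_p p^{\infty}$, exactly as required; consequently $(n-1)N_{G'/\mathbb{Z}^{\oplus\infty}}=\prod_p p^{\infty}$ and the torsion part becomes $\mathbb{Q}/\mathbb{Z}$.

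With this $G'$ in hand, I would take the integer parameter of Theorem \ref{Thm:Inf} to be $0$ (any multiple of $n-1$ works), so that the unit $0\oplus[k(n-1)^{-1}]$ reduces to $0$. Theorem \ref{Thm:Inf} then produces an amenable minimal Cantor $\mathbb{F}_n$-system with invariant $(G\oplus\mathbb{Q}/\mathbb{Z},0,\mathbb{Z}^{\oplus\infty})$ whose crossed product is a Kirchberg algebra in the UCT class, settling the case $\Gamma=\mathbb{F}_n$. For a general virtually free $\Gamma$, I would feed this triple into Theorem \ref{Thm:Infv} together with a finite index free subgroup $\Lambda\cong\mathbb{F}_n$: since the unit is already $0$, multiplying it by the index $[\Gamma:\Lambda]$ leaves it $0$, and the induced $\Gamma$-system retains minimality, amenability, topological freeness, the same $K_0$ and $K_1$, and a Kirchberg crossed product in the UCT class. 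In both cases the crossed product and $A$ carry isomorphic $K$-theoretic invariants, so the Kirchberg--Phillips theorem provides an isomorphism, yielding the asserted decomposition of $A$. I expect the embedding step to be the only genuine obstacle: matching $G$ up to isomorphism with an enrichment of $\mathbb{Z}^{\oplus\infty}$ whose torsion quotient exhausts all prime powers is where the care lies, whereas the passage through Theorems \ref{Thm:Inf} and \ref{Thm:Infv} and the final classification are essentially bookkeeping.
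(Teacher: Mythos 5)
Your proof is correct and takes essentially the same approach as the paper: the paper likewise reduces, via Theorems \ref{Thm:Inf} and \ref{Thm:Infv} (and Kirchberg--Phillips), to producing an embedding of $G$ into $\mathbb{Q}^{\oplus\infty}$ whose image $G'$ contains $\mathbb{Z}^{\oplus\infty}$ with $N_{G'/\mathbb{Z}^{\oplus\infty}}=\prod_{p\in\mathcal{P}}p^{\infty}$, which it constructs by sending a maximal linearly independent sequence $(x_n)_n$ to $n^{-1}e_n$. Your variant, rescaling by prime powers $p_i^{-k_i}$ rather than by $n^{-1}$ (and your explicit handling of the unit via $k$ a multiple of $n-1$), is only a cosmetic difference.
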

\begin{proof}
Let $G$ be as stated.
Thanks to Theorems \ref{Thm:Inf} and \ref{Thm:Infv}, it suffices to show that
there is an embedding $\iota\colon G\hookrightarrow\mathbb{Q}^{\oplus\infty}$ such that
its image $G'$ contains $\mathbb{Z}^{\oplus\infty}$ and
satisfies $N_{G'/\mathbb{Z}^{\oplus\infty}}=\prod_{p\in\mathcal{P}} p^{\infty}$.
To see this, take a maximal linear independent sequence $(x_n)_n$ of $G$.
Then the mapping $x_n\mapsto n^{-1}e_n\in\mathbb{Q}^{\oplus\infty}, n\in\mathbb{N}$
extends to the desired inclusion.
Here $(e_n)_n$ denotes the canonical basis of $\mathbb{Z}^{\oplus\infty}$.
\end{proof}
\begin{Rem}
The triple $(0, 0, \mathbb{Z}^{\oplus\infty})$ also satisfies the statement in Corollary \ref{Cor:Dec}.
In this case, each $\Gamma$-Cantor system can be taken to be free.
This follows from the proof of Theorem 5.1 in \cite{ES}, the existence of a minimal Cantor $\mathbb{F}_n$-system
whose $K_1$-group is isomorphic to $\mathbb{Z}^{\oplus\infty}$, and the Pimsner--Voiculescu exact sequence.
\end{Rem}
We say a Cantor system is profinite if it is of the form
$\varprojlim(\Gamma\curvearrowright \Gamma/\Gamma_m)_m$
for some (strictly) decreasing sequence $(\Gamma_m)_m$ of finite index subgroups of $\Gamma$.
When each $\Gamma_m$ is normal in $\Gamma$, the corresponding profinite Cantor system is free if and only if the intersection
$\bigcap_m \Gamma_m$ only consists of the unit element.
Our constructions in Theorems \ref{Thm:Inf} and \ref{Thm:Infv} also
provide continuously many amenable minimal free Cantor systems for every virtually free group.
\begin{Thm}\label{Thm:free}
Let $\Gamma$ be a virtually free group.
Then there exist continuously many amenable minimal free Cantor $\Gamma$-systems
whose crossed products are mutually non-isomorphic Kirchberg algebras in the UCT class.
\end{Thm}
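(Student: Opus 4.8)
The plan is to reduce to the case $\Gamma=\mathbb{F}_n$ and there to produce continuously many \emph{free} instances among the diagonal systems built in Theorem \ref{Thm:Inf}. For the reduction, suppose $\Lambda\leq\Gamma$ is a finite-index copy of $\mathbb{F}_n$ and $\gamma\colon\Lambda\curvearrowright X$ is one of the free systems constructed below. The induced system $\widetilde{\gamma}\colon\Gamma\curvearrowright\Gamma\times_\Lambda X$ of Theorem \ref{Thm:Infv} is again free: if $g\in\Gamma$ fixes a class $[h,x]$, then $k:=h^{-1}gh\in\Lambda$ and $\gamma(k)(x)=x$, so freeness of $\gamma$ forces $k=e$, whence $g=e$. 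Since induction gives $C(\Gamma\times_\Lambda X)\rtimes\Gamma\cong\mathbb{M}_k(C(X)\rtimes\Lambda)$, distinct $K$-theories downstairs remain distinct upstairs, so it suffices to treat $\mathbb{F}_n$.

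Next I would isolate a freeness criterion for the systems of Theorem \ref{Thm:Inf} (taking $k=1$). Such a system is $\gamma=\beta_n\times\alpha$ with $\alpha=\varprojlim(\mathbb{F}_n\curvearrowright\mathbb{F}_n/\Gamma_m)$ a profinite action. Because any nontrivial $g\in\mathbb{F}_n$ acts on $\partial\mathbb{F}_n$ with fixed-point set exactly the two endpoints of its axis, a fixed point $(w,y)$ of $g$ for $\gamma$ forces $g.y=y$; hence $\gamma$ is free as soon as $\alpha$ is free, i.e.\ as soon as $\bigcap_m\bigcap_{g\in\mathbb{F}_n}g\Gamma_m g^{-1}=\{e\}$. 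In particular, if the $\Gamma_m$ are normal with trivial intersection, then $\gamma$ is free. Since $\mathbb{F}_n$ is residually finite, a cofinal decreasing sequence of finite-index normal subgroups with trivial intersection exists; I would fix one such reference tower $(M_m)$ and arrange the subgroups coming from the construction to be cofinal with it, so that the total tower has trivial core intersection.

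The heart of the matter is to keep the $K$-theoretic bookkeeping of Theorem \ref{Thm:Inf} intact under this freeness constraint. I would choose $(M_m)$ to have indices that are powers of a single prime $q$ (free groups are residually-$q$ for every $q$), and, for each infinite set $P\subseteq\mathcal{P}\setminus\{q\}$, run the inductive construction of Theorem \ref{Thm:Inf} so that the torsion part of the resulting $K_0$-group is $\Lambda\bigl((n-1)N\bigr)$ for a supernatural number $N$ with $N(p)=\infty$ exactly when $p\in P$, interleaving the subgroups produced by Lemma \ref{Lem:Conn} with the $M_m$. The extra intersection steps only enlarge indices by $q$-powers, hence only affect the exponent of $N$ at $q$ (and the fixed finite factor $n-1$ affects finitely many primes); therefore $P=\{p\neq q:\text{the }p\text{-primary part of }K_0^{\mathrm{tor}}\text{ is infinite}\}$ is recovered from the crossed product. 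Since $\Lambda(M)\cong\Lambda(N)$ iff $M=N$, this yields $2^{\aleph_0}$ pairwise non-isomorphic algebras, each a Kirchberg algebra in the UCT class and each a crossed product of a free amenable minimal Cantor system (amenability, minimality and the Kirchberg/UCT property being inherited from Theorem \ref{Thm:Inf} through the projective and inductive limits).

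The main obstacle I anticipate is precisely this reconciliation: the freeness requirement forces the tower to be deep enough to separate points of $\mathbb{F}_n$ in finite quotients, whereas the clean identification of $K_0^{\mathrm{tor}}$ with $\Lambda((n-1)N)$ in Theorem \ref{Thm:Inf} relied on the specially structured subgroups furnished by Lemma \ref{Lem:Conn}, namely injectivity of the torsion part of each $K_0(j_m)$ and control of its image up to order-two elements. One must verify that the interleaved intersection steps with the $M_m$ can be realized while preserving these structural properties, so that the splitting-off of the torsion subgroup via Szele's theorem still applies, and that the distinguishing invariant genuinely ranges over continuum many values; this is where the care lies.
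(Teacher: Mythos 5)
Your overall strategy (reduce to $\mathbb{F}_n$ by induction, realize free systems as $\beta\times(\text{profinite system from a normal tower})$, distinguish by the torsion of $K_0$) is the right one, and your verification that the induced system of Theorem \ref{Thm:Infv} preserves freeness is correct. But there are two genuine problems. First, your freeness criterion is stated incorrectly: the profinite system $\varprojlim \mathbb{F}_n/\Gamma_m$ is free if and only if every $g\neq e$ fails to be \emph{conjugate into} some $\Gamma_m$ (by compactness, the fixed-point set of $g$ is a projective limit of finite sets, which is nonempty as soon as all of them are), and this is strictly stronger than your condition $\bigcap_m\bigcap_{g}g\Gamma_m g^{-1}=\{e\}$. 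Indeed, by M. Hall's theorem one can choose a decreasing tower $(H_m)$ of finite-index subgroups of $\mathbb{F}_2$ with $\bigcap_m H_m=\langle a\rangle$; then $\bigcap_m\bigcap_g gH_mg^{-1}$ is a normal subgroup contained in $\langle a\rangle$, hence trivial, yet $a$ fixes the point given by the identity cosets. (The two special cases you actually invoke -- a normal tower with trivial intersection, or a tower cofinal with one -- are fine.) Second, and more seriously, the step you yourself flag as "where the care lies" is a real gap: Lemma \ref{Lem:Conn} \emph{constructs} subgroups with the stated $K_0$-control; it says nothing about a \emph{prescribed} finite-index subgroup such as $\Gamma'\cap M_m$, so nothing in your sketch gives injectivity of the torsion part or control of the image of $K_0$ for the interleaved intersection steps, and hence the identification of $K_0^{\rm tor}$ (let alone the Szele splitting) is not established for your towers.

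Both problems disappear once you observe that the distinguishing invariant needs only the torsion subgroup of $K_0$, and that this is computable for an \emph{arbitrary} tower $(\Gamma_m)$ with no structural hypotheses: at each finite stage the torsion subgroup is the cyclic group $\mathbb{Z}_{(n-1)[\mathbb{F}_n:\Gamma_m]}$ generated by $[r_{W_m}]_0=[\chi_{\partial\mathbb{F}_n\times\{[e]\}}]_0$, and the connecting map sends this class to $[\Gamma_m:\Gamma_{m+1}]\cdot[r_{W_{m+1}}]_0$, because the fiber over the coset $\Gamma_m$ decomposes into $[\Gamma_m:\Gamma_{m+1}]$ cosets of $\Gamma_{m+1}$, each unitarily equivalent via some $\lambda(g)$ to the fiber over $\Gamma_{m+1}$; since torsion commutes with inductive limits, $K_0^{\rm tor}\cong\Lambda\bigl((n-1)\prod_m[\Gamma_{m-1}:\Gamma_m]\bigr)$. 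This is exactly how the paper proceeds, and it also lets the paper avoid your two-tower interleaving altogether: for each nonempty $Q\subseteq\mathcal{P}$ it takes the single tower $\Gamma_m:=\ker\bigl(\Gamma_{m-1}\rightarrow\Gamma_{m-1}^{\rm ab}/q_m\Gamma_{m-1}^{\rm ab}\bigr)$, where $q_m$ is the product of the primes in an increasing exhaustion of $Q$. These subgroups are characteristic, hence normal in $\mathbb{F}_n$, their intersection is trivial by Levi's theorem \cite[Chap.1, Prop.3.3]{LS} (giving freeness), and the indices are $q_m$-powers, so $K_0^{\rm tor}\cong\Lambda\bigl((n-1)\prod_{q\in Q}q^\infty\bigr)$ recovers $Q$, yielding continuum many mutually non-isomorphic Kirchberg algebras. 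So your proposal is repairable, but as written its central $K$-theoretic step is unproved, and the lemma you cite cannot prove it.
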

\begin{proof}
We first show the assertion for the case $\Gamma=\mathbb{F}_n$.
For each nonempty subset $Q$ of $\mathcal{P}$, take a decreasing sequence
of finite index subgroups of $\mathbb{F}_n$ as follows.
First take an increasing sequence $(F_m)_m$ of finite subsets of $Q$ whose union is $Q$
and set $q_m:=\prod_{p\in F_m}p$ for each $m$.
Let $\pi_1\colon\mathbb{F}_n\rightarrow G_1$ be the quotient homomorphism
where $G_1$ is the quotient $\mathbb{F}_n^{\rm ab}/q_1\mathbb{F}_n^{\rm ab}$ of $\mathbb{F}_n$.
Then $\Gamma_1:=\ker\pi_1$ is a proper characteristic subgroup of $\mathbb{F}_n$ whose index
is a power of $q_1$.
Next consider the quotient homomorphism $\pi_2\colon\Gamma_1\rightarrow G_2$
where $G_2$ is the quotient $\Gamma_1^{\rm ab}/q_2\Gamma_1^{\rm ab}$ of $\Gamma_1$.
Then $\Gamma_2:=\ker\pi_2$ is a proper characteristic subgroup of $\Gamma_1$ whose index is a power of $q_2$.
Continuing this process inductively, we get a decreasing sequence
$(\Gamma_m)_m$ of subgroups of $\mathbb{F}_n$ that satisfies the following conditions.
\begin{itemize}
\item Each $\Gamma_m$ is a proper characteristic subgroup of $\Gamma_{m-1}$.
\item Each index $[\Gamma_m:\Gamma_{m-1}]$ is a power of $q_m$.
\end{itemize}
From the first condition, Levi's theorem \cite[Chap.1, Prop.3.3]{LS} implies that the intersection $\bigcap_m\Gamma_m$ only consists of the unit element.
Denote by $\alpha_Q$ the profinite Cantor system defined by the sequence $(\Gamma_m)_m$.
Then the proof of Theorem \ref{Thm:Inf} shows that the Cantor system $\gamma_Q:=\beta\times\alpha_Q$ is amenable and minimal,
the crossed product is a Kirchberg algebra in the UCT class,
and the torsion subgroup of $K_0$-group is isomorphic
to $\Lambda((n-1)q_\infty)$,
where $q_\infty:=\prod_{q\in Q}q^\infty$.
This completes the proof.
The case of virtually free groups is derived from the case of free groups
by the same method as that in the proof of Theorem \ref{Thm:Infv}.
\end{proof}
\begin{Rem}
In fact, the $K$-theory of Cantor systems in Theorem \ref{Thm:free} are computable.
For the case $\Gamma=\mathbb{F}_n$, the computation in Lemma \ref{Lem:Conn} shows
$$(K_0(C(X)\rtimes_{\gamma_Q} \mathbb{F}_n), [1]_0)\cong (R_Q^{\oplus\infty}\oplus\Lambda((n-1)q_\infty), 0\oplus [(n-1)^{-1}]),$$
where $R_Q$ is the subring of $\mathbb{Q}$ generated by $\{q^{-1}:q\in Q\}$ (and regard it as an additive group).
Computations for general cases then easily follows from the above computation.
Note that again by the skyscraper construction, we also can replace the position of the unit as before.
We also remark that by replacing the sequence $(q_m)_m$,
we can obtain more examples of amenable minimal free Cantor systems.
\end{Rem}

Next we give a more general construction of amenable minimal Cantor $\mathbb{F}_n$-systems.
This may provide more wild Cantor systems.
We show that their crossed products are Kirchberg algebras in the UCT class.

First, recall a few basic properties of torsion free elements
in hyperbolic groups.
For the free group case, it is not difficult to prove them directly.
\begin{Prop}[{\cite[Proposition 4.2]{KB}}]\label{Prop:Bou1}
Let $\Gamma$ be a hyperbolic group,
let $g$ be a torsion free element in $\Gamma$.
Then the boundary action of $g$ admits exactly two fixed points, say $\omega_+(g)$ and $\omega_-(g)$.
The points $\omega_+(g)$ and $\omega_-(g)$ are given by
$\lim_{n\rightarrow\infty} g^n$ and
$\lim_{n\rightarrow\infty} g^{-n}$ respectively in the Gromov compactification $\Gamma\cup\partial\Gamma$.
\end{Prop}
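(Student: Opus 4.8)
The plan is to prove the statement directly for $\Gamma=\mathbb{F}_n$, which is the only case needed in this paper, and to regard the general hyperbolic case as the standard North--South dynamics of the boundary action, for which I would simply cite \cite{KB}. So assume $\Gamma=\mathbb{F}_n$; as $\mathbb{F}_n$ is torsion free, $g$ is an arbitrary nontrivial element. First I would reduce to a cyclically reduced word. Writing $g=hwh^{-1}$ with $w=a_1\cdots a_k$ cyclically reduced and $h\in\mathbb{F}_n$, the homeomorphism $\beta(h)$ of $\partial\mathbb{F}_n$ conjugates $\beta(g)$ into $\beta(w)$, so it carries the fixed point set of $\beta(g)$ bijectively onto that of $\beta(w)$. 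Moreover $g^{\pm n}=hw^{\pm n}h^{-1}$, and since appending the bounded word $h^{-1}$ does not affect a boundary limit, $\lim_n g^{\pm n}=\beta(h)(\lim_n w^{\pm n})$ in $\mathbb{F}_n\cup\partial\mathbb{F}_n$. Hence it suffices to treat cyclically reduced $w$ and then transport everything by $\beta(h)$.

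Second I would produce the two fixed points. Let $w^\infty$ and $(w^{-1})^\infty$ be the one-sided infinite periodic words with periods $a_1\cdots a_k$ and $a_k^{-1}\cdots a_1^{-1}$. Cyclic reducedness, i.e.\ $a_k\neq a_1^{-1}$, guarantees both concatenations are reduced, hence define genuine points of $\partial\mathbb{F}_n$, and it shows that $w^n$ and $w^\infty$ share the prefix $(a_1\cdots a_k)^n$, so $w^\infty=\lim_n w^n$ and likewise $(w^{-1})^\infty=\lim_n w^{-n}$. Each point is fixed: prepending one period of $w$ to $w^\infty$ reproduces $w^\infty$ with no cancellation, while $(w^{-1})^\infty$ is fixed by $w^{-1}$ and therefore by $w$. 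They are distinct, since their first letters $a_1$ and $a_k^{-1}$ differ by exactly the cyclic reducedness hypothesis. Set $\omega_+:=w^\infty$ and $\omega_-:=(w^{-1})^\infty$.

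Third, and this is the heart of the argument, I would rule out further fixed points by establishing North--South dynamics directly. The key identity is that the amount of cancellation in a reduced product $uv$ equals the length of the longest common prefix of $u^{-1}$ and $v$. Taking $u=w^n$ and $v=\xi$ for any $\xi\in\partial\mathbb{F}_n$ with $\xi\neq\omega_-$, the reduced word of $u^{-1}=w^{-n}$ is the length-$nk$ prefix of $(w^{-1})^\infty$, while the longest common prefix of $\xi$ and $(w^{-1})^\infty$ has some finite length $L$ because $\xi\neq\omega_-$. Thus for $nk>L$ the cancellation in $w^n\xi$ equals exactly $L$, so the surviving left portion is the length-$(nk-L)$ prefix of $w^\infty$; consequently $\beta(w^n)(\xi)$ agrees with $\omega_+$ on a prefix of length $nk-L\to\infty$, giving $\beta(w^n)(\xi)\to\omega_+$. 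Applied to a fixed point $\xi\neq\omega_-$ this forces $\xi=\lim_n\beta(w^n)(\xi)=\omega_+$. Hence the fixed point set is contained in $\{\omega_+,\omega_-\}$, and combined with the second step it is exactly $\{\omega_+,\omega_-\}$, two distinct points.

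The main obstacle is the bookkeeping in the third step: justifying the cancellation identity and verifying that what survives after prepending $w^n$ is genuinely an increasing prefix of $w^\infty$, which relies on cyclic reducedness to exclude any cancellation inside or between the prepended copies of $w$. For a general hyperbolic group the same two limits $\lim_n g^{\pm n}$ are the fixed points, and the statement is precisely the North--South dynamics proved via Gromov product estimates in \cite{KB}; since the free group computation above covers every situation used later in this paper, I would cite the general case rather than reprove it.
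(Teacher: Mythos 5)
Your proof is correct, but note how it sits relative to the paper: the paper gives no proof of this proposition at all — it is imported as a known result, cited verbatim from \cite[Proposition 4.2]{KB}, with only the preceding remark that for the free group case the properties are not difficult to prove directly. Your proposal is exactly the direct free-group argument that this remark alludes to but never carries out: the reduction to a cyclically reduced conjugate, the identification of $\omega_{\pm}(w)$ with the periodic infinite words $w^{\infty}$ and $(w^{-1})^{\infty}$, and the cancellation bookkeeping (the cancellation in $w^{n}\xi$ equals the length of the common prefix of $w^{-n}$ and $\xi$, which stabilizes at a finite value $L$ once $\xi\neq\omega_{-}$) do establish genuine North--South dynamics on $\partial\mathbb{F}_{n}$ and hence rule out any third fixed point. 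What your route buys is self-containedness for every place the paper essentially uses the statement, since Theorem \ref{Thm:gmi}, Lemma \ref{Lem:Finf}, and the $K_{1}$-computation all concern free groups; what it does not cover is the general hyperbolic case, which the paper needs only in the remark about ICC hyperbolic groups, and which both you and the paper delegate to \cite{KB}. One point worth making explicit if this is written up: in your reduction step, the equality $\lim_{n}g^{\pm n}=\beta(h)\bigl(\lim_{n}w^{\pm n}\bigr)$ uses two facts — that right multiplication by the fixed word $h^{-1}$ perturbs reduced words only in a suffix of bounded length (which you state), and that left translation by $h$ extends to a homeomorphism of the compactification $\mathbb{F}_{n}\cup\partial\mathbb{F}_{n}$ (which you leave implicit); both are standard, so this is a matter of exposition rather than a gap.
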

\begin{Prop}[{\cite[Theorem 4.3]{KB}}] \label{Prop:Bou2}
Let $\Gamma$ and $g$ be as above.
Then for any open subsets $U, V$ of $\partial\Gamma$ satisfying $\omega_+(g)\in U, \omega_-(g)\in V$, there exists
$n\in \mathbb{N}$ such that
$g^n(\partial\Gamma\setminus V)\subset U$.
\end{Prop}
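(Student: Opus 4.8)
The plan is to treat the free group case, which is the one relevant to this paper, by an explicit analysis of cancellation, and then to indicate how the general hyperbolic case follows from the convergence dynamics of the boundary action. First I would reduce to the case that $g$ is cyclically reduced: writing $g=uwu^{-1}$ with $w$ cyclically reduced, we have $g^{n}=uw^{n}u^{-1}$, so $\beta(g^{n})=\beta(u)\circ\beta(w^{n})\circ\beta(u)^{-1}$; since $\beta(u)$ is a homeomorphism carrying the fixed points $\omega_{\pm}(w)$ to $\omega_{\pm}(g)$, replacing $U$ and $V$ by $\beta(u)^{-1}(U)$ and $\beta(u)^{-1}(V)$ reduces the assertion to $w$. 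For cyclically reduced $w$ of length $\ell$, Proposition \ref{Prop:Bou1} identifies the attracting and repelling fixed points $\omega_{+}$ and $\omega_{-}$ with the infinite words $www\cdots$ and $w^{-1}w^{-1}\cdots$ respectively.

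The key point I would establish is a cancellation estimate. For $\xi\in\partial\mathbb{F}_{n}$, when one reduces the word $w^{n}\xi$, the word $w^{n}$ is already reduced (this is exactly cyclic reducedness), so cancellation occurs only at the junction with $\xi$, and it proceeds alphabet by alphabet precisely as long as $\xi$ agrees with $w^{-1}w^{-1}\cdots=\omega_{-}$. Hence, writing $c(\xi)$ for the length of the longest common initial segment of $\xi$ and $\omega_{-}$, exactly $c(\xi)$ alphabets cancel provided $c(\xi)<n\ell$; in that case the reduced word $w^{n}\xi$ begins with the first $n\ell-c(\xi)$ alphabets of $w^{n}$, which are the first $n\ell-c(\xi)$ alphabets of $\omega_{+}$.

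It then remains to make this uniform over $\partial\mathbb{F}_{n}\setminus V$ by compactness. Since $V$ is open and contains $\omega_{-}$, it contains a cylinder set consisting of all infinite words whose first $j$ alphabets coincide with those of $\omega_{-}$; thus every $\xi$ in the compact set $K:=\partial\mathbb{F}_{n}\setminus V$ satisfies $c(\xi)<j$. Likewise $U$ contains a cylinder of words agreeing with $\omega_{+}$ on their first $j'$ alphabets. Choosing any $n$ with $n\ell-j\geq j'$, the estimate above applies (as $c(\xi)<j<n\ell$) and shows that every $\xi\in K$ has $w^{n}\xi$ agreeing with $\omega_{+}$ on at least its first $j'$ alphabets, so $w^{n}\xi\in U$. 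This yields $\beta(w^{n})(K)\subset U$, which is the desired conclusion.

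The only delicate step is the cancellation estimate: one must verify both that the internal junctions between consecutive copies of $w$ contribute no cancellation and that the surviving initial block is genuinely a segment of $\omega_{+}$ rather than something altered by the reduction. For a general hyperbolic group there is no literal cancellation to exploit, and the hard part becomes quantifying the same phenomenon geometrically; there I would instead use that the boundary action is a convergence action, so that the distinct powers $g^{n}$ converge locally uniformly on $\partial\Gamma\setminus\{\omega_{-}\}$ to the constant $\omega_{+}$, and apply this to the compact set $\partial\Gamma\setminus V$ with target neighborhood $U$. Concretely this is the Gromov product estimate $(g^{n}\xi\mid\omega_{+})_{e}\to\infty$ uniformly for $\xi\in\partial\Gamma\setminus V$, which follows from the linear growth of $|g^{n}|$ together with $\delta$-hyperbolicity.
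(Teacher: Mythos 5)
The paper offers no proof of Proposition \ref{Prop:Bou2}: it is recalled verbatim from \cite[Theorem 4.3]{KB}, with only the preceding remark that for free groups such facts are ``not difficult to prove directly.'' Your proposal supplies exactly that direct free-group argument, and it is correct. The conjugation reduction is valid: writing $g=uwu^{-1}$ in reduced form with $w$ cyclically reduced, $g^n=uw^nu^{-1}$ is reduced as written, so $\omega_\pm(g)=u.\omega_\pm(w)$ and the statement for $g$ with $U,V$ is equivalent to the statement for $w$ with $u^{-1}.U,\,u^{-1}.V$. The cancellation estimate is also right: cyclic reducedness makes $w^n$ reduced, the letters cancelled in $w^n\xi$ are matched against the initial letters of $w^{-n}$, i.e.\ of $\omega_-(w)$, so exactly $c(\xi)$ letters cancel when $c(\xi)<n\ell$, and the surviving prefix of length $n\ell-c(\xi)$ is a prefix of $\omega_+(w)$; the cylinder-set argument then gives uniformity over the complement of $V$, and any $n$ with $n\ell\geq j+j'$ works. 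Two caveats. First, your treatment of the general hyperbolic case is not an independent proof: ``$g^n\to\omega_+$ locally uniformly on $\partial\Gamma\setminus\{\omega_-\}$'' is essentially a restatement of the proposition itself (North--South dynamics), so invoking it is circular, and the Gromov-product estimate that would actually establish it is only named, not carried out. Since the paper itself simply defers to \cite{KB} at this point, this does not put you below the paper's own standard, but you should be clear that only the free-group portion of your text is a proof. Second, that free-group portion is the substantively useful one here, as the applications in the paper (Theorem \ref{Thm:gmi}, Lemma \ref{Lem:Finf}) concern $\mathbb{F}_n$, with the extension to ICC hyperbolic groups relegated to a remark that again leans on the cited general theorem.
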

\begin{Rem}\label{Rem:Bou}
In particular, for any $g\in\Gamma$ and $x\in\partial\Gamma\setminus\{\omega_-(g)\}$,
$\omega_+(g)$ is a limit point of the sequence $(g^n x)_{n=1}^\infty$
and similarly for $\omega_-$.
In fact, we have the genuine convergence, but we do not use this fact.
\end{Rem}

Now we establish a generalized construction.
\begin{Thm}\label{Thm:gmi}
Let $\alpha$ be the projective limit of a projective system $(\alpha_k)_k$ of non-faithful minimal dynamical systems 
of $\mathbb{F}_n$ such that
the underlying space of $\alpha_k$ is either finite or the Cantor set for each $k$.
Then the diagonal action $\beta\times\alpha$ is an amenable minimal Cantor $\mathbb{F}_n$-system
and its crossed product is a Kirchberg algebra in the UCT class.
\end{Thm}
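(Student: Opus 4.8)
The plan is to verify the defining properties one at a time, with the non-faithfulness hypothesis supplying the one nontrivial ingredient. Write $Y=\varprojlim Y_k$ with the equivariant projections $\pi_k\colon Y\to Y_k$, and put $X=\partial\mathbb{F}_n\times Y$. Since $Y$ is compact, metrizable and totally disconnected, so is $X$; as $\partial\mathbb{F}_n$ has no isolated points, neither does $X$, so $X$ is homeomorphic to the Cantor set. Amenability is immediate: the first-coordinate projection exhibits the amenable system $\beta$ as a quotient of $\beta\times\alpha$, and amenability passes to the original system. The crucial input is that for each $k$ the kernel $N_k:=\ker\alpha_k$ is a nontrivial normal subgroup of $\mathbb{F}_n$; fix $g_k\in N_k\setminus\{e\}$. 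Being torsion free, $g_k$ carries a north--south dynamics on $\partial\mathbb{F}_n$ by Proposition \ref{Prop:Bou1}, with attracting and repelling fixed points $\omega_+(g_k)$ and $\omega_-(g_k)$, and this is the engine driving both minimality and pure infiniteness.

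For minimality I would first record that each $N_k$ acts minimally on $\partial\mathbb{F}_n$. Since the boundary action of $\mathbb{F}_n$ is minimal, the orbit $\mathbb{F}_n.\omega_+(g_k)$ is dense, so for any nonempty open $U\subseteq\partial\mathbb{F}_n$ one may pick $z=\gamma.\omega_+(g_k)\in U$; by normality $h:=\gamma g_k\gamma^{-1}\in N_k$, and $h$ has north--south dynamics with attracting point $z$ and repelling point $z'=\gamma.\omega_-(g_k)$. By Remark \ref{Rem:Bou}, for any $\xi\neq z'$ the iterates $h^m.\xi$ accumulate at $z\in U$. The exceptional point $\xi=z'$ is handled by first nudging $\xi$ with some $a\in N_k$ having $a.\xi\neq z'$, which exists because the nontrivial normal subgroup $N_k$ has no global fixed point on $\partial\mathbb{F}_n$ (a global fixed point would, by normality and density of $\mathbb{F}_n.\xi$, force $N_k$ to act trivially, contradicting $g_k\neq e$). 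Thus every $N_k$-orbit meets every nonempty open set. Now, given a source $(\xi,y)$ and a basic clopen target $U\times\pi_k^{-1}(V_k)$, I would first use minimality of $\alpha_k$ on $Y_k$ to find $g$ with $\alpha_k(g)\pi_k(y)\in V_k$, so that $g.y\in\pi_k^{-1}(V_k)$; then, since elements of $N_k$ fix the $Y_k$-coordinate and hence preserve $\pi_k^{-1}(V_k)$ setwise, minimality of $N_k\curvearrowright\partial\mathbb{F}_n$ yields $m\in N_k$ with $m.(g.\xi)\in U$ while $mg.y$ remains in $\pi_k^{-1}(V_k)$. Hence $mg.(\xi,y)\in U\times\pi_k^{-1}(V_k)$, and $\beta\times\alpha$ is minimal.

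It remains to identify the crossed product. Being amenable and minimal over $\mathbb{F}_n$, the system is topologically free, so by Archbold--Spielberg \cite{AS} the reduced crossed product is simple; it is separable, and amenability of the action makes it nuclear \cite{Ana0} and places it in the UCT class \cite{Tu}. For pure infiniteness I would show the action is locally contracting and invoke \cite{Ana}. Fix a basic clopen set $U\times\pi_k^{-1}(V_k)$. Choosing $z=\gamma.\omega_+(g_k)\in U$ and $h=\gamma g_k\gamma^{-1}\in N_k$ as above, pick nested clopen neighborhoods $z\in U_1\subsetneq U_2\subseteq U$ with $z'=\gamma.\omega_-(g_k)\notin U_2$; Proposition \ref{Prop:Bou2} then provides $N$ with $h^N(U_2)\subseteq U_1$. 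As $h\in N_k$ preserves $\pi_k^{-1}(V_k)$ setwise, putting $W:=U_2\times\pi_k^{-1}(V_k)$ gives $h^N(W)=h^N(U_2)\times\pi_k^{-1}(V_k)\subseteq U_1\times\pi_k^{-1}(V_k)\subsetneq W$, and $W$ being clopen this means $\overline{h^N(W)}\subsetneq W$. Thus $\beta\times\alpha$ is locally contracting, and together with simplicity this forces the crossed product to be purely infinite; altogether it is a Kirchberg algebra in the UCT class.

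The Cantor and amenability assertions and the standard deductions of simplicity, nuclearity and the UCT are routine. The heart of the argument, and the step I expect to require the most care, is the coordinated use of the kernel elements $g_k$: the same north--south dynamics, transported around the boundary by conjugation and combined with the density of $\mathbb{F}_n.\omega_+(g_k)$, must be arranged to deliver both the minimality of $N_k\curvearrowright\partial\mathbb{F}_n$ and the local-contraction estimate. Verifying that the $Y$-coordinate can be steered into $\pi_k^{-1}(V_k)$ independently of the boundary adjustment, using that $N_k$ fixes the level-$k$ coordinate, is precisely what makes the two manipulations compatible.
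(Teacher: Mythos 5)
Your proof is correct, and its dynamical core is the same as the paper's: conjugating a nontrivial kernel element to spread the attracting fixed points $\gamma.\omega_+(g_k)$ densely over $\partial\mathbb{F}_n$ (via minimality of $\beta$ and normality of the kernel), handling the exceptional repelling point, and exploiting that $\ker\alpha_k$ acts trivially on the $Y_k$-coordinate so the two coordinates can be steered independently. Where you genuinely differ is in organization and in the pure infiniteness step. The paper first reduces to a single non-faithful minimal system, invoking that amenability and minimality pass to projective limits and that Kirchberg algebras and the UCT class are closed under inductive limits; you instead work on the limit system throughout, with basic clopen sets $U\times\pi_k^{-1}(V_k)$, which avoids that limit-closure bookkeeping at the cost of carrying the index $k$ through the argument. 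For pure infiniteness, the paper applies the R\o rdam--Sierakowski criterion \cite[Theorem 4.1]{RS}, showing every projection $\chi_U\otimes\chi_V$ in $C(\partial\mathbb{F}_n\times X)$ is properly infinite by compressing $U$ into arbitrarily prescribed clopen sets using the kernel's north--south dynamics; you verify instead that the action is locally contracting (a local boundary action) and cite Anantharaman-Delaroche \cite{Ana} / Laca--Spielberg \cite[Theorem 9]{LaS} together with simplicity from Archbold--Spielberg \cite{AS}. The paper itself records, in a remark immediately after its proof, that this second route is also applicable and follows from the same dynamics, so your version is a legitimate and acknowledged variant; the only point to keep explicit in your route is the simplicity/topological freeness input (amenable minimal $\mathbb{F}_n$-systems are topologically free), which you do supply.
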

\begin{proof}
Since all the properties we need to check are preserved by a projective limit,
it suffices to show the assertion for the case $\Gamma:=\ker(\alpha)\neq\{e\}$.

Amenability of $\beta\times\alpha$ is clear since it has an amenable quotient.
Then the crossed product satisfies the UCT by Tu's theorem \cite{Tu}.

Next we show minimality of $\beta\times\alpha$.
To see this, it suffices to show that the restriction
of $\beta$ to
$\Gamma$ is minimal.
Observe that for any
$g\in\mathbb{F}_n\setminus\{e\}$ and $h\in\mathbb{F}_n$,
we have $\omega_+(hgh^{-1})=h.\omega_+(g)$ and similarly for $\omega_-$.
This shows that the set $\{\omega_+(g):g\in\Gamma\setminus\{e\}\}$ is dense
in $\partial\mathbb{F}_n$.
Moreover, these equations show that for any $x\in\partial\mathbb{F}_n$,
there is $g\in\Gamma\setminus\{e\}$ such that both
$\omega_\pm(g)$ are not equal to $x$.
These observations with Remark \ref{Rem:Bou} show minimality of $\beta|_\Gamma$.

Finally, we prove pure infiniteness of the crossed product.
By the result of R\o rdam--Sierakowski \cite[Theorem 4.1]{RS},
it suffices to show that every projection in $C(\partial\mathbb{F}_n\times X)$ is properly infinite in the crossed product.
To see this, for a proper clopen subset $U$ of $\partial\mathbb{F}_n$,
take $g\in \Gamma\setminus \{e\}$ such that $\omega_-(g)\not\in U$.
Then by Proposition \ref{Prop:Bou2}, for any neighborhood
$V$ of $\omega_+(g)$, there is $n\in \mathbb{N}$ such that
$g^n.U\subset V$ holds.
Since $V$ can be chosen arbitrarily small
and $\Gamma$ acts minimally on $\partial\mathbb{F}_n$,
we see that for any nonempty clopen subset $V$ of $\partial\mathbb{F}_n$,
there is $g\in\Gamma$ such that
$g.U\subset V$ holds.
Thus, any projection of the form $\chi_U\otimes \chi_V$, 
where $U, V$ are clopen subsets of $\partial \mathbb{F}_n, X$,  respectively,
is properly infinite in $C(\partial\mathbb{F}_n\times X)\rtimes\mathbb{F}_n$.
Since every projection in $C(\partial\mathbb{F}_n \times X)$
can be written as a finite orthogonal sum of projections
of the above form,
the proof is now completed.
\end{proof}
\begin{Rem}
The above proof shows that for any Cantor system $\gamma$ as above,
its transformation groupoid $X\rtimes_\gamma\mathbb{F}_n$ (\cite[Example 5.6.3]{BO}) is purely infinite in the sense of Matui \cite[Definition 4.9]{Mat}.
Then, thanks to Matui's theorem \cite[Theorem 4.16]{Mat}, the commutator subgroup $D([[\gamma]])$ of the topological full group $[[\gamma]]$
turns out to be simple. (For the definition of the topological full group, see the next section.)
\end{Rem}
\begin{Rem}
For the proof of pure infiniteness of crossed products in Theorem \ref{Thm:gmi},
the theorem of Laca and Spielberg \cite[Theorem 9]{LaS} is also applicable.
In that case, we need to show that the action is a local boundary action.
This easily follows from our proof.
Moreover, in this case, the proof also works for any ICC hyperbolic groups.
This provides amenable minimal dynamical systems of
an ICC hyperbolic group $\Gamma$
on the product of $\partial\Gamma$ and the Cantor set whose
crossed products are Kirchberg algebras in the UCT class.
However, remark that the underlying spaces of these dynamical systems
are not the Cantor set unless $\Gamma$ is virtually free \cite[Theorem 8.1]{KB}.
\end{Rem}

The $K_1$-groups of these Cantor systems have the following simple formula.

\begin{Prop}
Let $\alpha\colon \mathbb{F}_n\curvearrowright X$ be a minimal $\mathbb{F}_n$-system as in Theorem $\ref{Thm:gmi}$.
Then the $K_1$-group of the diagonal action $\beta\times \alpha$ is
isomorphic to $\mathbb{Z}^{\oplus L_\alpha},$
where $L_\alpha$ is
given by $(n-1)\cdot[\mathbb{F}_n:\ker(\alpha)]+1$.
\end{Prop}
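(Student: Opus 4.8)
The plan is to compute $K_1$ by the Pimsner--Voiculescu machinery already prepared in the proof of Theorem \ref{Thm:Inf}, and then to read off its rank from the finite quotients of $\alpha$ via Theorem \ref{Thm:bou}. Writing $Z:=\partial\mathbb{F}_n\times X$ and $\gamma:=\beta\times\alpha$, the Pimsner--Voiculescu sequence \cite{PV} identifies $K_1(C(Z)\rtimes\mathbb{F}_n)$ with $\ker(\eta_\gamma)$, where $\eta_\gamma\colon C(Z,\mathbb{Z})^{\oplus S}\to C(Z,\mathbb{Z})$ is the homomorphism recalled in that proof. Since $\ker(\eta_\gamma)$ is a subgroup of the free abelian group $C(Z,\mathbb{Z})^{\oplus S}$, it is itself free abelian \cite[Vol.~I, Theorem~14.5]{Fuc}; being countable, it is isomorphic to $\mathbb{Z}^{\oplus L}$ with $L$ its rank. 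Thus the whole problem reduces to computing this rank and showing that it equals $L_\alpha$.

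Next I would pass to the defining projective system. Write $\alpha=\varprojlim_k\alpha_k$ with $\alpha_k\colon\mathbb{F}_n\curvearrowright Y_k$, so that $C(Z)\rtimes\mathbb{F}_n=\varinjlim_k\bigl(C(\partial\mathbb{F}_n\times Y_k)\rtimes\mathbb{F}_n\bigr)$ and, by continuity of $K$-theory, $K_1(C(Z)\rtimes\mathbb{F}_n)=\varinjlim_k K_1\bigl(C(\partial\mathbb{F}_n\times Y_k)\rtimes\mathbb{F}_n\bigr)$. Exactly as in the proof of Theorem \ref{Thm:Inf}, functoriality of the Pimsner--Voiculescu sequence shows that each $\eta_{\gamma_k}$ is the restriction of $\eta_\gamma$ to $C(\partial\mathbb{F}_n\times Y_k,\mathbb{Z})^{\oplus S}$, so the connecting maps of this inductive system are injective. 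It therefore remains to compute the rank at each finite stage.

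For the profinite systems to which this proposition applies, the system may be refined to a cofinal one in which every $Y_k$ is a finite transitive $\mathbb{F}_n$-set. For such a stage, with point stabilizer $\Gamma_k$, Theorem \ref{Thm:bou} gives the Morita equivalence $C(\partial\mathbb{F}_n\times Y_k)\rtimes\mathbb{F}_n\cong\mathbb{M}_{\#Y_k}(C(\partial\Gamma_k)\rtimes\Gamma_k)$; since $[\mathbb{F}_n:\Gamma_k]=\#Y_k$, Schreier's formula identifies $\Gamma_k$ with $\mathbb{F}_{(n-1)\#Y_k+1}$, and combining this with $K_1(C(\partial\mathbb{F}_m)\rtimes\mathbb{F}_m)\cong\mathbb{Z}^m$ yields $K_1\cong\mathbb{Z}^{(n-1)\#Y_k+1}$. (Equivalently, Shapiro's lemma applies, since $C(\partial\mathbb{F}_n\times(\mathbb{F}_n/\Gamma_k),\mathbb{Z})\cong\mathrm{Ind}_{\Gamma_k}^{\mathbb{F}_n}C(\partial\Gamma_k,\mathbb{Z})$.) Because the finite quotients act by left translation, one has $\Gamma_k=\ker(\alpha_k)$ and hence $\#Y_k=[\mathbb{F}_n:\ker(\alpha_k)]$. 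As the connecting maps are injective, the ranks $(n-1)[\mathbb{F}_n:\ker(\alpha_k)]+1$ are non-decreasing, and their supremum is $(n-1)[\mathbb{F}_n:\ker(\alpha)]+1=L_\alpha$, using $\ker(\alpha)=\bigcap_k\ker(\alpha_k)$. If this supremum is finite the system stabilizes and $K_1\cong\mathbb{Z}^{L_\alpha}$; if it is infinite, then $\ker(\eta_\gamma)$ is a countable free abelian group of infinite rank, hence $\cong\mathbb{Z}^{\oplus\infty}$. In either case $K_1\cong\mathbb{Z}^{\oplus L_\alpha}$.

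The main obstacle I anticipate is the finite-stage bookkeeping: one must match the $K_1$-rank of $C(\partial\mathbb{F}_n\times Y_k)\rtimes\mathbb{F}_n$ with the index $[\mathbb{F}_n:\ker(\alpha_k)]$ rather than merely with the stabilizer index, which forces the finite quotients to act by translations (so that point stabilizers coincide with $\ker(\alpha_k)$); this is precisely where the nature of the actions $\alpha_k$ must be used, and it must be accompanied by the verification that $[\mathbb{F}_n:\ker(\alpha_k)]$ converges to $[\mathbb{F}_n:\ker(\alpha)]$. By contrast, the free-abelian structure supplied by \cite{Fuc} makes the infinite-rank conclusion $\mathbb{Z}^{\oplus\infty}$ immediate, avoiding any delicate direct-limit analysis.
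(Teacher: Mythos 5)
Your finite-stage computation is fine as far as it goes, but the reduction on which your whole argument rests --- ``the system may be refined to a cofinal one in which every $Y_k$ is a finite transitive $\mathbb{F}_n$-set'' --- is not available, and this is exactly where the real content of the proposition lies. The hypothesis of Theorem \ref{Thm:gmi} is not that $\alpha$ is profinite: each $\alpha_k$ is only required to be a non-faithful \emph{minimal} system whose underlying space is finite \emph{or the Cantor set}. A non-faithful minimal Cantor system need not admit any nontrivial finite quotient at all; for instance, compose $\pi_1^{(n)}\colon\mathbb{F}_n\rightarrow\mathbb{Z}$ with a topologically weakly mixing minimal Cantor $\mathbb{Z}$-system (such as the Chacon subshift), which has no continuous rational eigenvalues and hence no nontrivial finite factors. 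For such $\alpha$ one has $[\mathbb{F}_n:\ker(\alpha)]=\infty$, your inductive system has no finite stages to feed on, and yet this is precisely the case in which one must prove that the rank of $\ker(\eta_\gamma)$ is infinite. The paper's proof is organized around this point: it first disposes of the case where every $\ker(\alpha_m)$ has finite index by quoting the proof of Theorem \ref{Thm:Inf} (this part agrees with your computation), and then, when some $\ker(\alpha_m)$ has infinite index, it proves directly that \emph{any} non-faithful Cantor system $\gamma$ has $K_1$ of infinite rank: starting from a nontrivial word $z\in\ker(\gamma)$, it uses the last segments $z^{(j)}$ of $z$ and, for arbitrary $N$, points $x_1,\ldots,x_N$ with suitably disjoint clopen neighborhoods, to build $N$ linearly independent elements $p_k\in\ker(\eta_\gamma)\subset C(X,\mathbb{Z})^{\oplus S}$. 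Your proposal contains no substitute for this construction, so the essential case is missing.

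A second, smaller error: your parenthetical fix of the ``stabilizer versus kernel'' issue does not work. A transitive translation action $\mathbb{F}_n\curvearrowright\mathbb{F}_n/\Gamma_k$ has point stabilizers the conjugates of $\Gamma_k$, whereas $\ker(\alpha_k)$ is the normal core $\bigcap_{g}g\Gamma_k g^{-1}$; these coincide only when $\Gamma_k$ is normal, which ``acting by left translations'' does not force. The Morita equivalence of Theorem \ref{Thm:bou} yields the $K_1$-rank $(n-1)[\mathbb{F}_n:\Gamma_k]+1$ in terms of the \emph{stabilizer} index, not the kernel index. In the situations the paper actually runs this argument through (Lemma \ref{Lem:Conn}, Theorem \ref{Thm:Inf}, Theorem \ref{Thm:odo}), the finite-stage subgroups arise as kernels of homomorphisms onto finite abelian groups, hence are normal and the two indices agree; your claim as stated does not close this gap in general.
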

\begin{proof}
If each index $[\mathbb{F}_n : \ker(\alpha_m)]$ is finite, then the claim follows from the proof of Theorem \ref{Thm:Inf}.
So assume that for some $m$, the subgroup $\ker(\alpha_m)$ is of infinite index.
Since the $K_1$-group of $\beta\times \alpha$ is free abelian,
it suffices to show that the rank of $K_1$-group is infinite.
To see this, by the Pimsner--Voiculescu six term exact sequence, it suffices to show
that for any non-faithful Cantor system $\gamma \colon \mathbb{F}_n\curvearrowright X$,
its $K_1$-group is of infinite rank.
So let  $\gamma \colon \mathbb{F}_n\curvearrowright X$ be a non-faithful Cantor system.
Take a nonzero element $z$ of $\ker(\gamma)$.
For $1\leq j\leq |z|$, denote by $z^{(j)}$ the last $j$th segment of $z$
and denote by $a_j$ the first alphabet of $z^{(j)}$.
We put $z^{(0)}=e$ for convenience.
Put $F:=\{z^{(j)}\}_{0 \leq j\leq |z|}$.
Since $X$ is infinite, for any $N\in\mathbb{N}$, there are $N$ points $x_1, \ldots, x_N \in X$
such that the sets $F\cdot x_i$ are mutually disjoint and that the equality
$z^{(j)}.x_i=z^{(k)}.x_i$ implies the existence of a neighborhood $V$ of $x_i$
such that $z^{(j)}.x=z^{(k)}.x$ for all $x\in V$.
Then, for each $i$, we can take a clopen neighborhood $V_i$ of $x_i$ such that
the intersection $z^{(j)}.V_i\cap z^{(k)}.V_l$ is nonempty only if the equalities
$i=l$ and $z^{(j)}.V_i=z^{(k)}.V_l$ hold.
For each $k$ and $j$, define a clopen subset $V_{k, j}$ of $X$ by
\[V_{k, j}=\left\{ \begin{array}{ll}
z^{(j-1)}.V_k & {\rm if\ } a_j \in S \\
z^{(j)}.V_k & {\rm if\ } a_j \in S^{-1}.\\
\end{array} \right.\]
For each $k$, take $j(k),  l(k) \in \{1, 2, \ldots, |z|\}$ such that $j(k) \leq l(k)$,
$z^{(j(k)-1)}.V_k= z^{(l(k))}.V_k$, and
$z^{(l)}.V_k\neq z^{(m)}.V_k$
for any two distinct elements $l, m\in \{j(k), \ldots, l(k) \}$.
Then, for each $k$, set
\[p_k:=\sum_{j=j(k)}^{l(k)} \chi_{V_{k, j}}\otimes\delta_{a_j} \in C(X, \mathbb{Z})^{\oplus S}.\]
Here we put $\delta_{s^{-1}}:=-\delta_{s}$ for $s\in S$.
We show that $p_k$ is nonzero for each $k$.
This is trivial if $l(k)= j(k)$.
If $l(k)= j(k)+1$, then either $V_{k, j(k)}\neq V_{k, l(k)}$
or  $\chi_{V_{k, j(k)}}\otimes\delta_{a_{j(k)}} =\chi_{V_{k, l(k)}}\otimes\delta_{a_{l(k)}}$ holds.
This shows $p_k\neq 0$.
If $l(k)> j(k)+1$,
then by the choices of $V_k$, $j(k)$, and $l(k)$,
for any $j\in \{ j(k)+1, \ldots, l(k)-1\} (\neq \emptyset)$,
we have
$\chi_{V_{k, l}}\otimes \delta_{a_l} \neq \pm \chi_{V_{k, j}}\otimes \delta_{a_j}$ for all $l \in \{j(k), \ldots, l(k)\}\setminus\{j\}$.
Since the set $\{\chi_{V_{k, j}}\otimes \delta_{s}:j=1,\ldots, |z|, s\in S\}$ is linearly independent,
we have $p_k\neq 0$.

By disjointness of $(\bigcup_j V_{k, j})$, the sequence $(p_k)_k$ is linearly independent.
It is easy to check that each $p_k$ is contained in the kernel of
the connecting map 
\[\eta_{\gamma} \colon C(X)^{\oplus S}\rightarrow C(X)\]
of the Pimsner--Voiculescu six term exact sequence.
This shows that the rank of $K_1(C(X)\rtimes_{\gamma}  \mathbb{F}_n)$ is not less than $N$.
Since $N$ is arbitrary, the rank must be infinite.
\end{proof}

However, computations of $K_0$-groups of these dynamical systems seem to be difficult in general.
\begin{Prob}
Find a formula of the $K_0$-group of $\beta\times\alpha$ in Theorem \ref{Thm:gmi}
by using certain (computable) invariants of $\alpha$.
It would be already interesting to find a formula for $K_0^{\rm tor}$ or
$K_0/K_0^{\rm tor}$.
\end{Prob}
Note that if $\alpha$ is a profinite Cantor system, then the torsion part of
the $K_0$-group of $\beta\times\alpha$ and the position of
the unit are easily determined by the same method as that in the proof of Theorem \ref{Thm:Inf}.
If we further assume that $\alpha$ is defined by the decreasing sequence $(\Gamma_m)_m$
of finite index subgroups of $\mathbb{F}_n$ such that for all sufficiently large $m$,
$\Gamma_m$ is a normal subgroup of $\Gamma_{m-1}$ and the quotient $\Gamma_{m}/\Gamma_{m-1}$ is solvable,
then the $K_0$-group of $\beta\times\alpha$ is also computable and the invariant
$(K_0, [1]_0, K_1)$ is isomorphic to one of those appearing in Theorem \ref{Thm:Inf} with $k=1$.

We end the section by giving the analogue of Theorem \ref{Thm:free} for virtually $\mathbb{F}_\infty$ groups.
Here $\mathbb{F}_\infty$ denotes the free group on countably infinite generators.
\begin{Lem}\label{Lem:Finf}
Let $D(\mathbb{F}_2)$ be the commutator subgroup of $\mathbb{F}_2$.
Then the restriction of the boundary action to $D(\mathbb{F}_2)$ is amenable, minimal, and
its crossed product satisfies the following properties.

\begin{itemize}
\item It is a Kirchberg algebra in the UCT class.
\item The unit $[1]_0$ generates a subgroup isomorphic to $\mathbb{Z}$.
\item For any $n\geq 2, [1]_0\not\in n K_0(A)$.
\end{itemize}
The same statement also holds for any finite index subgroup of $D(\mathbb{F}_2)$.
\end{Lem}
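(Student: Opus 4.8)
The plan is to separate the soft dynamical assertions (which put the crossed product in the Kirchberg/UCT class) from the genuinely arithmetic statements about the class $[1]_0$. Since $D(\mathbb{F}_2)$ is a nontrivial \emph{normal} subgroup of $\mathbb{F}_2$, the restriction $\beta|_{D(\mathbb{F}_2)}$ falls squarely within the argument of Theorem \ref{Thm:gmi}. Amenability is automatic, being the restriction of the amenable boundary action to a subgroup. Minimality follows verbatim from that proof: by normality together with $\omega_+(hgh^{-1})=h.\omega_+(g)$, the set $\{\omega_+(g):g\in D(\mathbb{F}_2)\setminus\{e\}\}$ is dense in $\partial\mathbb{F}_2$, and then Remark \ref{Rem:Bou} yields that every orbit is dense. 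Topological freeness is immediate here, and in fact easier than in the general case: every nontrivial element of $D(\mathbb{F}_2)$ is a torsion-free element of $\mathbb{F}_2$, hence by Proposition \ref{Prop:Bou1} fixes exactly two points of $\partial\mathbb{F}_2$, so its non-fixed set is dense. The pure infiniteness argument via \cite[Theorem 4.1]{RS} and Proposition \ref{Prop:Bou2} also carries over unchanged. Combining these with \cite{AS} (simplicity) and \cite{Tu} (UCT) shows the crossed product $A$ is a unital Kirchberg algebra in the UCT class.

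The crux is the second and third bullets, and the difficulty is that $K_0(A)$ is an inductive limit whose structure need not be transparent, so the order and divisibility of $[1]_0$ must be \emph{detected} externally. I would do this by approximating $D(\mathbb{F}_2)$ from above by finite-index subgroups of $\mathbb{F}_2$ whose boundary algebras are completely understood. For each $m$ set $K_m:=\ker\bigl(\mathbb{F}_2\to(\mathbb{Z}_{m!})^2\bigr)$, the kernel of the mod-$m!$ reduction of the abelianization map. As this factors through $\mathbb{F}_2^{\rm ab}$, we have $D(\mathbb{F}_2)\subseteq K_m$, and $K_m$ is normal of index $(m!)^2$. By Schreier's formula \cite{LS}, $K_m$ is free of rank $r_m:=(m!)^2+1$, and since $K_m$ has finite index the inclusion $K_m\hookrightarrow\mathbb{F}_2$ induces a $K_m$-equivariant homeomorphism $\partial K_m\cong\partial\mathbb{F}_2$. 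Hence $C(\partial\mathbb{F}_2)\rtimes K_m$ is exactly the boundary algebra of the free group $K_m$, so by the description of $K_0$ of boundary algebras recalled at the start of this section (\cite{Spi}, \cite{Cun}) its $K_0$-group is $\mathbb{Z}^{r_m}\oplus\mathbb{Z}_{r_m-1}[1]_0$ with $[1]_0$ of order exactly $r_m-1=(m!)^2$. The unital inclusion $D(\mathbb{F}_2)\hookrightarrow K_m$ induces a unital embedding $A=C(\partial\mathbb{F}_2)\rtimes D(\mathbb{F}_2)\hookrightarrow C(\partial\mathbb{F}_2)\rtimes K_m$ carrying $[1]_0$ to $[1]_0$.

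Both remaining statements then drop out by testing against these maps. If $[1]_0$ had finite order $d$ in $K_0(A)$, its image would have order $(m!)^2$ dividing $d$ for every $m$, which is impossible; hence $[1]_0$ generates a copy of $\mathbb{Z}$. Similarly, suppose $[1]_0=n\xi$ with $n\geq 2$. In $K_0(C(\partial\mathbb{F}_2)\rtimes K_m)=\mathbb{Z}^{r_m}\oplus\mathbb{Z}_{(m!)^2}$ the torsion generator $[1]_0$ would equal $n$ times the image of $\xi$; the free part of that image must vanish, and $n$ times its torsion part must equal $[1]_0$ in $\mathbb{Z}_{(m!)^2}$, which is solvable only if $\gcd(n,(m!)^2)=1$. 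Choosing $m\geq n$ forces $n\mid m!$, a contradiction, so $[1]_0\notin nK_0(A)$.

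Finally, for a finite-index subgroup $H$ of $D(\mathbb{F}_2)$ the same proof applies with no new ideas, which is why the lemma records it. The inclusions $H\subseteq D(\mathbb{F}_2)\subseteq K_m$ persist, so the unital embeddings $C(\partial\mathbb{F}_2)\rtimes H\hookrightarrow C(\partial\mathbb{F}_2)\rtimes K_m$ still deliver the two unit statements. For the dynamical properties, topological freeness is unchanged (nontrivial elements of $H$ are torsion-free in $\mathbb{F}_2$), and minimality and pure infiniteness of $\beta|_H$ follow as before once one observes that a suitable power of any nontrivial $g\in D(\mathbb{F}_2)$ lies in $H$ with the same boundary fixed points, so $\{\omega_+(h):h\in H\setminus\{e\}\}$ remains dense in $\partial\mathbb{F}_2$. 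I expect the only delicate point to be bookkeeping in this last reduction, namely checking that the density-of-fixed-points and pure-infiniteness steps of Theorem \ref{Thm:gmi} survive the passage from the normal subgroup $D(\mathbb{F}_2)$ to the possibly non-normal finite-index subgroup $H$; the power trick above is precisely what resolves it.
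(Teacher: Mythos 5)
Your proposal is correct and follows essentially the same route as the paper: the dynamical assertions (minimality, pure infiniteness, hence Kirchberg/UCT) are delegated to the proof of Theorem \ref{Thm:gmi}, and the two assertions about $[1]_0$ are detected by unital embeddings of $C(\partial\mathbb{F}_2)\rtimes D(\mathbb{F}_2)$ into crossed products $C(\partial\mathbb{F}_2)\rtimes\Lambda$ for finite-index subgroups $\Lambda$ of $\mathbb{F}_2$ containing $D(\mathbb{F}_2)$, which are boundary algebras of free groups with known $K_0$. The paper uses subgroups $\Lambda_n\cong\mathbb{F}_n$ of every rank $n\geq 3$, so that the image of $[1]_0$ generates $\mathbb{Z}_{n-1}$ for all $n$; your family $K_m=\ker\bigl(\mathbb{F}_2\to(\mathbb{Z}_{m!})^2\bigr)$, with image of $[1]_0$ of order $(m!)^2$, is only a cosmetic variant, and your gcd argument for non-divisibility is sound. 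The one point where you genuinely diverge is the passage to a finite-index subgroup $H\subseteq D(\mathbb{F}_2)$: the paper produces a nontrivial subgroup of $H$ that is \emph{normal in} $\mathbb{F}_2$ (the kernel of the action of $D(\mathbb{F}_2)$ on $\bigsqcup_{g\in\mathbb{F}_2} D(\mathbb{F}_2)/g H g^{-1}$, nontrivial because $D(\mathbb{F}_2)$ is not torsion) and then quotes Theorem \ref{Thm:gmi} as a black box, whereas you re-run the internal argument of that proof for $H$ directly, using the power trick: some $g^p$ with $1\leq p\leq[D(\mathbb{F}_2):H]$ lies in $H$ and has $\omega_{\pm}(g^p)=\omega_{\pm}(g)$, so the density of fixed points (hence minimality and proper infiniteness of projections) survives. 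Both reductions rest on the same elementary fact that bounded-index subgroups absorb powers, and both are correct; yours is marginally more self-contained, while the paper's keeps Theorem \ref{Thm:gmi} as the single source of the dynamical input.
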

\begin{proof}
Amenability of $\beta|_{D(\mathbb{F}_2)}$ is clear.
The proof of Theorem \ref{Thm:gmi} shows minimality of $\beta_2|_{D(\mathbb{F}_2)}$ and
pure infiniteness of the crossed product.
For any $n\geq 3$, there is a finite index subgroup $\Lambda_n$ of $\mathbb{F}_2$
which contains $D(\mathbb{F}_2)$ and is isomorphic to $\mathbb{F}_n$.
(This follows from Schreier's formula, for example.)
Note that for such $\Lambda_n$, the restriction $\beta_2|_{\Lambda_n}$ is isomorphic to the boundary action of $\mathbb{F}_n$.
Hence for any $n\geq 2$, there is a unital embedding
$C(\partial \mathbb{F}_2)\rtimes D(\mathbb{F}_2) \rightarrow C(\partial \mathbb{F}_n)\rtimes \mathbb{F}_n$.
This shows that for any $n\geq 2$,
there is a group homomorphism
$K_0(C(\partial \mathbb{F}_2)\rtimes D(\mathbb{F}_2))\rightarrow \mathbb{Z}^n\oplus\mathbb{Z}_{n-1}$
that maps the unit $[1]_0$ to the canonical generator of $\mathbb{Z}_{n-1}$.
This shows the claim for $D(\mathbb{F}_2)$.

Now let a finite index subgroup $\Lambda$ of $D(\mathbb{F}_2)$ be given.
It suffices to show minimality of $\beta_2|_{\Lambda}$ and pure infiniteness of the crossed product.
Again by the proof of Theorem \ref{Thm:gmi}, it suffices to show that $\Lambda$ contains a nontrivial normal subgroup
of $\mathbb{F}_2$.
To see this, consider the group action
$D(\mathbb{F}_2) \curvearrowright \bigsqcup_{g\in \mathbb{F}_2} D(\mathbb{F}_2) /g\Lambda g^{-1}$
given by the left multiplication action on each component.
Then the kernel of the action is a normal subgroup of $\mathbb{F}_2$ contained in $\Lambda$.
Moreover, it must be nontrivial because $D(\mathbb{F}_2)$ is not a torsion group.
\end{proof}
\begin{Rem}
By the Pimsner--Voiculescu six term exact sequence for free groups,
in fact the above group homomorphisms can be chosen to be surjective.
Hence $K_0(C(\partial \mathbb{F}_2)\rtimes\Lambda)$ is not finitely generated
for any subgroup $\Lambda$ of $D(\mathbb{F}_2)$.
\end{Rem}
\begin{Thm}
Let $\Gamma$ be a virtually $\mathbb{F}_\infty$ group.
Then there exist continuously many amenable minimal free Cantor $\Gamma$-systems
whose crossed products are mutually non-isomorphic Kirchberg algebras in the UCT class.
\end{Thm}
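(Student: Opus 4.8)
The plan is to reduce to the case $\Gamma=\mathbb{F}_\infty$ and then imitate the proof of Theorem \ref{Thm:free}, using the restricted boundary action supplied by Lemma \ref{Lem:Finf} in place of the genuine boundary action. For the reduction, let $\Lambda$ be a finite index subgroup of $\Gamma$ isomorphic to $\mathbb{F}_\infty$; after passing to its normal core we may take $\Lambda$ normal (still free of infinite rank). Assuming continuously many amenable minimal free Cantor $\Lambda$-systems $\gamma\colon\Lambda\curvearrowright X$ with mutually non-isomorphic Kirchberg crossed products, I would apply the induced construction of Theorem \ref{Thm:Infv}: the induced system $\Gamma\times_\Lambda X$ is again amenable and minimal, it is \emph{free} (a fixed point $[h,x]$ of $g$ would force $h^{-1}gh\in\Lambda$ with $\gamma(h^{-1}gh)x=x$, whence $h^{-1}gh=e$ by freeness of $\gamma$, i.e. $g=e$), and by Green's imprimitivity theorem \cite{Gre} its crossed product is $\mathbb{M}_{[\Gamma:\Lambda]}(C(X)\rtimes\Lambda)$. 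Since matrix amplification by the fixed size $[\Gamma:\Lambda]$ preserves the $K$-theoretic distinctions, the induced systems remain mutually non-isomorphic. Thus it suffices to treat $\Gamma=\mathbb{F}_\infty$, which I identify with $D(\mathbb{F}_2)$.

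For each nonempty $Q\subseteq\mathcal{P}$ I would construct a free profinite $D(\mathbb{F}_2)$-system $\alpha_Q$ as follows. Finitely generated free groups, hence their subgroup $D(\mathbb{F}_2)$, are residually (finite $q$-group) for every prime $q$; using this together with countability of $D(\mathbb{F}_2)$, I build a strictly decreasing sequence $(\Gamma_m)_m$ of finite index normal subgroups such that each index $[\Gamma_{m-1}:\Gamma_m]$ is a power of a prime in $Q$, every prime of $Q$ occurs with total exponent $\infty$ (so that $\prod_m[\Gamma_{m-1}:\Gamma_m]=N_Q:=\prod_{q\in Q}q^\infty$ as a supernatural number), and $\bigcap_m\Gamma_m=\{e\}$ (by interleaving, at the $m$-th step, a finite $q$-group quotient that separates the $m$-th element of an enumeration of $D(\mathbb{F}_2)\setminus\{e\}$). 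As the $\Gamma_m$ are normal with trivial intersection, $\alpha_Q:=\varprojlim(D(\mathbb{F}_2)\curvearrowright D(\mathbb{F}_2)/\Gamma_m)$ is free. I then set $\gamma_Q:=\beta_2|_{D(\mathbb{F}_2)}\times\alpha_Q$, on the space $X_Q:=\partial\mathbb{F}_2\times Y_Q$ with $Y_Q$ the underlying Cantor set of $\alpha_Q$. It is free because $\alpha_Q$ is, amenable because $\beta_2|_{D(\mathbb{F}_2)}$ is an amenable quotient (Lemma \ref{Lem:Finf}), and minimal as a projective limit of the finite-level systems $\beta_2|_{D(\mathbb{F}_2)}\times(D(\mathbb{F}_2)/\Gamma_m)$, each of which is minimal by the argument of Theorem \ref{Thm:bou} together with the minimality of $\beta_2|_{\Gamma_m}$ for the finite index subgroup $\Gamma_m$ (Lemma \ref{Lem:Finf}).

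To identify the crossed product I would apply Green's imprimitivity theorem \cite{Gre} at each finite level, exactly as in Theorem \ref{Thm:bou}, to obtain
\[C(\partial\mathbb{F}_2\times(D(\mathbb{F}_2)/\Gamma_m))\rtimes D(\mathbb{F}_2)\cong\mathbb{M}_{[D(\mathbb{F}_2):\Gamma_m]}(C(\partial\mathbb{F}_2)\rtimes\Gamma_m).\]
Since $\Gamma_m$ is a finite index subgroup of $D(\mathbb{F}_2)$, Lemma \ref{Lem:Finf} shows $C(\partial\mathbb{F}_2)\rtimes\Gamma_m$ is a Kirchberg algebra in the UCT class, hence so is each matrix amplification. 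As $C(X_Q)\rtimes D(\mathbb{F}_2)$ is the inductive limit of these algebras along the canonical unital injective connecting maps, and both the UCT class and the class of Kirchberg algebras are closed under such inductive limits, the crossed product of $\gamma_Q$ is a Kirchberg algebra in the UCT class.

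It remains to see that distinct $Q$ give non-isomorphic crossed products; being Kirchberg algebras in the UCT class, they are classified by $K$-theory, and I would distinguish them through the divisibility of the unit. From the inductive limit description, the class $[1]_0$ equals $[D(\mathbb{F}_2):\Gamma_m]$ times the image of the unit of $C(\partial\mathbb{F}_2)\rtimes\Gamma_m$, so $[1]_0\in[D(\mathbb{F}_2):\Gamma_m]\,K_0$ for every $m$; hence $[1]_0$ is divisible by every $N_Q$-smooth integer. Conversely, Lemma \ref{Lem:Finf} gives $[1]_0\notin pK_0$ at each finite stage, and since the connecting maps introduce only primes of $Q$, this non-divisibility persists in the limit for $p\notin Q$. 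Therefore $\{n\in\mathbb{N}:[1]_0\in nK_0\}$ is exactly the set of $N_Q$-smooth integers, which recovers $Q$, and as there are continuously many such $Q$ we obtain continuously many mutually non-isomorphic crossed products. The main obstacle is this last piece of $K$-theoretic bookkeeping, namely verifying that no prime outside $Q$ divides $[1]_0$ in the inductive limit; the subgroup chain and the passage through Green's theorem are otherwise routine adaptations of Theorems \ref{Thm:bou}, \ref{Thm:free} and \ref{Thm:Infv}. Combining this with the reduction of the first paragraph completes the proof for all virtually $\mathbb{F}_\infty$ groups.
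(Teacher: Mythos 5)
Your proposal is correct and follows the same global strategy as the paper's proof: reduce to $\Gamma=\mathbb{F}_\infty\cong D(\mathbb{F}_2)$ via the induced-system construction, form diagonal actions of $\beta_2|_{D(\mathbb{F}_2)}$ with free profinite systems indexed by sets of primes $Q$, identify the crossed product as an inductive limit of matrix amplifications of the algebras $C(\partial\mathbb{F}_2)\rtimes\Gamma_m$ handled by Lemma \ref{Lem:Finf}, and separate the resulting Kirchberg algebras by the set of primes dividing $[1]_0$ in $K_0$. The one genuinely different ingredient is how you produce the decreasing sequence of finite-index normal subgroups of $D(\mathbb{F}_2)$ with trivial intersection and $Q$-controlled indices: you invoke the classical fact that free groups, hence their subgroups, are residually (finite $q$-group), and interleave separating $q$-group quotients of $D(\mathbb{F}_2)$ (intersecting $\Gamma_{m-1}$ with their kernels preserves normality in $D(\mathbb{F}_2)$ and gives $q$-power relative index), whereas the paper avoids any residual property of $\mathbb{F}_\infty$ by setting $\Upsilon_n:=\Gamma_n\cap\Lambda_n$, importing the trivial intersection from a sequence $(\Gamma_n)$ of subgroups of the ambient $\mathbb{F}_2$ (already constructed in Theorem \ref{Thm:free} via characteristic subgroups and Levi's theorem) and the index control from a sequence $(\Lambda_n)$ inside $\mathbb{F}_\infty$. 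Your route is self-contained inside $\mathbb{F}_\infty$ but needs the residually-$q$ theorem as an outside input; the paper's route recycles Theorem \ref{Thm:free} and needs only elementary bookkeeping. Two minor patches you should make explicit: first, in the reduction, amplification by $k=[\Gamma:\Lambda]$ changes the divisibility set of the unit to $Q\cup\{p:p\mid k\}$, so one should restrict to the (still continuum many) sets $Q$ avoiding the finitely many primes dividing $k$ --- the paper is equally silent on this; second, at a finite stage the unit is $k_m$ times the class of the unit of the corner $C(\partial\mathbb{F}_2)\rtimes\Gamma_m$, so non-divisibility by $p\notin Q$ requires the coprimality argument (from $ap+bk_m=1$, divisibility of $[1]_0$ by $p$ would force divisibility of the corner unit, contradicting Lemma \ref{Lem:Finf}), after which divisibility in the inductive limit is witnessed at a finite stage; this is exactly the bookkeeping you flagged, and it does go through. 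To your credit, you also verify freeness of the induced $\Gamma$-system, a point the paper leaves implicit.
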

\begin{proof}
By the induced dynamical system construction,
it suffices to show the claim for $\Gamma=\mathbb{F}_\infty$.
For any nonempty set $Q$ of prime numbers,
take a sequence $(q_n)_n$ of elements of $Q$
satisfying $\{ q_n\}_n=Q$.
Then take a decreasing sequence $(\Lambda_n)_n$ of finite index normal subgroups of $\mathbb{F}_\infty$
that satisfies $\Lambda_1=\mathbb{F}_\infty$ and $[\Lambda_n: \Lambda_{n+1}]=q_n$ for $n\in \mathbb{N}$.
We identify $\mathbb{F}_\infty$ with the commutator subgroup of $\mathbb{F}_2$ by a fixed isomorphism \cite[Chap.1 Prop.3.12]{LS}.
Take a decreasing sequence $(\Gamma_{n})_n$ of finite index normal subgroups of $\mathbb{F}_2$
such that the index $[\Gamma_n : \Gamma_{n+1}]$ is a power of $q_n$ and $\bigcap_n\Gamma_n=\{1\}$ holds.
(Such a sequence can be constructed in the same way as that in the proof of Theorem \ref{Thm:free}.)
Now set $\Upsilon_n:=\Gamma_n\cap \Lambda_n$.
Then $(\Upsilon_n)_n$ is a decreasing sequence of finite index normal subgroups of $\mathbb{F}_\infty$
such that the index $[\mathbb{F}_\infty : \Upsilon_n]$ divides a power of $q_1\cdots q_{n-1}$ and
is divisible by $q_1\cdots q_{n-1}$ for each $n$,
and $\bigcap_n\Upsilon_n=\{1\}$ holds.
(The first claim follows from the existence of a group embedding $\mathbb{F}_\infty/\Upsilon_n \rightarrow (\mathbb{F}_\infty/\Lambda_n) \times (\mathbb{F}_2/\Gamma_n)$.)
Now consider the Cantor system
$\gamma_Q:=\varprojlim((\beta_2|_{\mathbb{F}_\infty})\times \alpha_n\colon \mathbb{F}_\infty\curvearrowright \partial \mathbb{F}_2\times (\mathbb{F}_\infty/\Upsilon_n)).$
Then Lemma \ref{Lem:Finf} shows that the Cantor system
$\gamma_Q$ is amenable, minimal, free, and its crossed product
is a Kirchberg algebra in the UCT class.
Furthermore, a similar argument to that in the proof of Theorem \ref{Thm:Inf} shows the equality
\[\{p \in \mathcal{P} : [1]_0\in p K_0(C(X)\rtimes_{\gamma_Q}\mathbb{F}_\infty)\}=Q.\]
This shows that the crossed products of $\gamma_Q$'s are mutually non-isomorphic.
\end{proof}

\section{Classification of diagonal actions of boundary actions and products of odometer transformations}\label{sec:odo}
In this section, using the technique of computation of $K$-theory developed in Section \ref{sec:gmi},
we classify the amenable minimal Cantor $\mathbb{F}_n$-systems given by
the diagonal actions of the boundary actions and the products of the odometer transformations.

First we recall the definition of the odometer transformation.
For an infinite supernatural number $N$,
take a sequence $(k_m)_m$ of natural numbers such that
$k_m|k_{m+1}$ for all $m$ and ${\rm lcm}\{k_m:m\}=N$.
The odometer transformation of type $N$ is then defined as
the projective limit of the projective system
$(\mathbb{Z}\curvearrowright \mathbb{Z}_{k_m})_m$.
In this paper, we denote it by $\alpha_N$.
(Obviously, the definition of $\alpha_N$ only depends on $N$.)

Let $2\leq n<\infty$, let $1\leq k \leq n$,
and let $N_1, \ldots, N_k$ be a sequence of infinite supernatural numbers.
Fix an enumeration $\{s_1, \ldots, s_n\}$ of $S(\subset \mathbb{F}_n)$.
Then define a Cantor $\mathbb{F}_n$-system by
\[\gamma_{N_1,\ldots, N_k}^{(n)}:=\beta_n\times\left(\prod_{j=1}^k\alpha_{N_j}\circ \pi_j^{(n)}\right),\]
where $\pi_j^{(n)}$ denotes the homomorphism
$\pi_j^{(n)}\colon\mathbb{F}_n\rightarrow \mathbb{Z}$
given by $s_j\mapsto 1$ and $s_i \mapsto 0$ for $i\neq j$
for each $j$.
By the result of the previous section, each $\gamma_{N_1,\ldots, N_k}^{(n)}$
is an amenable minimal Cantor $\mathbb{F}_n$-system
and similar computations to those in Lemmas \ref{Lem:auto} and \ref{Lem:Conn} and
Theorem \ref{Thm:Inf} show the following theorem.
\begin{Thm}\label{Thm:odo}
Let $\gamma_{N_1,\ldots, N_k}^{(n)}$ be as above.
Then the crossed product of $\gamma_{N_1,\ldots, N_k}^{(n)}$ satisfies the following conditions.
\begin{itemize}
\item The pair of $K_0$-group and the unit $[1]_0$ is isomorphic to 
\[\left(\left(\bigoplus_{i=1}^k\Upsilon(N_i)\right)\oplus\mathbb{Z}^{\oplus \infty}\oplus\Lambda((n-1)N_1\cdots N_k), 0\oplus 0\oplus [(n-1)^{-1}]\right).\]
\item The $K_1$-group is isomorphic to $\mathbb{Z}^{\oplus \infty}$.
\item It is a Kirchberg algebra in the UCT class.
\end{itemize}
\end{Thm}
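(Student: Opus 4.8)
The plan is to realize $\gamma:=\gamma_{N_1,\dots,N_k}^{(n)}$ as a projective limit of finite systems and then to compute its $K$-theory as an inductive limit, exactly along the lines of Theorem \ref{Thm:Inf}. For each $j$ I would fix an increasing sequence $(k_{j,m})_m$ with $k_{j,m}\mid k_{j,m+1}$ and ${\rm lcm}_m\,k_{j,m}=N_j$, arranged so that at each step the space $F_{m+1}$ is obtained from $F_m:=\prod_{j=1}^{k}\mathbb{Z}_{k_{j,m}}$ by multiplying a single $k_{j,m}$ by a single prime $p$. Then $\gamma$ is the projective limit of the diagonal actions $\gamma_m:=\beta_n\times\sigma_m$, where $\sigma_m\colon\mathbb{F}_n\curvearrowright F_m$ is the transitive action through the surjection $s_j\mapsto e_j$ ($j\le k$), $s_i\mapsto 0$ ($i>k$). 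Each $\sigma_m$ is minimal, non-faithful, with finite underlying space, so Theorem \ref{Thm:gmi} applies verbatim and yields that $\gamma$ is an amenable minimal Cantor system whose crossed product is a Kirchberg algebra in the UCT class; this settles the third bullet.

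For $K_1$ I would argue as in Theorem \ref{Thm:Inf}. By Theorem \ref{Thm:bou} together with Schreier's formula \cite{LS}, $C(\partial\mathbb{F}_n\times F_m)\rtimes\mathbb{F}_n\cong\mathbb{M}_{K_m}(C(\partial\Gamma_m)\rtimes\Gamma_m)$ with $K_m=\prod_j k_{j,m}$ and $\Gamma_m$ free of rank $r_m=K_m(n-1)+1$, so the level-$m$ $K_1$-group is free of rank $r_m\to\infty$. The Pimsner--Voiculescu sequence \cite{PV} identifies $K_1(C(X)\rtimes\mathbb{F}_n)$ with $\ker(\eta_\gamma)\subset C(X,\mathbb{Z})^{\oplus S}$, which is free abelian by \cite[Vol.~I, Theorem 14.5]{Fuc} and of infinite rank by the above, hence isomorphic to $\mathbb{Z}^{\oplus\infty}$.

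The heart of the matter is $K_0$. Writing $K_0(C(X)\rtimes\mathbb{F}_n)=\varinjlim(\mathbb{Z}^{r_m}\oplus\mathbb{Z}_{r_m-1},h_m)$, I would compute each $h_m$ by the method of Lemma \ref{Lem:Conn}: decompose the clopen sets $\Omega(\cdot)$ into the finer $\Theta(\cdot)$ and evaluate the conjugating automorphism $\ad(s^{-1})$ via Lemmas \ref{Lem:auto} and \ref{Lem:auto2}. On the torsion summand this should give $[r_m]_0\mapsto p[r_{m+1}]_0$, so its limit is $\varinjlim\mathbb{Z}_{K_m(n-1)}\cong\Lambda((n-1)N_1\cdots N_k)$, and since $[1]_0=K_m[r_m]_0$ the unit is carried to $[(n-1)^{-1}]$ independently of $m$. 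For the free summand, identifying $\mathbb{Z}^{r_m}$ with $H_1$ of the Cayley graph of $F_m$, I expect the $n-k$ "loop" generators and most of the torus-grid cycles to split under each covering, contributing $\mathbb{Z}^{\oplus\infty}$, while for each odometer coordinate $j$ the class $\Sigma_{j,m}$ formed by the sum of all coordinate-$j$ axis cycles is carried to $\Sigma_{j,m+1}$ when another coordinate is refined and to $p\,\Sigma_{j,m+1}$ when coordinate $j$ itself is refined; thus $\varinjlim\langle\Sigma_{j,\bullet}\rangle\cong\Upsilon(N_j)$, producing the summand $\bigoplus_{j=1}^{k}\Upsilon(N_j)$.

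Finally, to see that the extension splits as $\bigl(\bigoplus_{j}\Upsilon(N_j)\oplus\mathbb{Z}^{\oplus\infty}\bigr)\oplus\Lambda((n-1)N_1\cdots N_k)$, I would reuse the Szele argument from Theorem \ref{Thm:Inf}: as in the last bullet of Lemma \ref{Lem:Conn}, after a suitable choice of generators the images of the free generators meet the torsion only in elements of order dividing $2$, so the subgroup $H$ generated by those images together with the at most one order-$2$ element has $H^{\rm tor}$ of order at most $2$; Szele's theorem \cite[Vol.~I, Prop.~27.1]{Fuc} splits $H$, and the resulting lift of the free part gives the claimed decomposition. The main obstacle I anticipate is precisely this free-part computation: verifying that the coordinate-axis-sum classes $\Sigma_{j,m}$ are multiplied by the correct prime and remain independent direct summands despite the smearing caused by refinements in the other coordinates. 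This is the one point where the several odometer directions genuinely interact, and it is where the bookkeeping of Lemma \ref{Lem:Conn} must be carried out simultaneously in all $k$ directions rather than one at a time.
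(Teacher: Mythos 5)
Your skeleton is the same as the paper's: realize $\gamma^{(n)}_{N_1,\ldots,N_k}$ as $\beta\times\alpha$ with $\alpha$ the profinite system defined by $\Gamma_m=\ker\bigl(\mathbb{F}_n\to\bigoplus_{j=1}^k\mathbb{Z}/N(m,j)\bigr)$, refining a single coordinate at each step; amenability, minimality, pure infiniteness and the UCT from Section \ref{sec:gmi} together with the inductive-limit argument of Theorem \ref{Thm:Inf}; $K_1\cong\mathbb{Z}^{\oplus\infty}$ from Pimsner--Voiculescu and freeness of subgroups of free abelian groups; and $K_0$ as an inductive limit, with Szele's theorem splitting off the torsion. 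But the heart of the theorem is exactly the step you flag as your ``anticipated obstacle'' and never carry out: the free part of the connecting maps. Your proposal asserts the expected behaviour of the classes $\Sigma_{j,m}$ rather than proving it, so the gap is genuine.

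Two concrete points. First, the variance in your cycle picture is backwards: $K_0^{\rm free}$ at level $m$ transforms under base changes by ${}^t(\varphi^{\rm ab})^{-1}$ (Lemma \ref{Lem:auto}), so it is naturally ${\rm Hom}(\Gamma_m^{\rm ab},\mathbb{Z})$, i.e.\ $H^1$ of the Schreier graph, whereas the covering maps push $H_1$ from level $m+1$ \emph{down} to level $m$ --- the wrong direction for the inductive limit; your $\Sigma_{j,m}$ only make sense after dualizing (as the ``normalized winding number in direction $j$'' functionals). Second, and more importantly, in the dual basis of spanning-tree cycles the smearing you fear is real: refining coordinate $j'\neq j$ by $p$ sends the dual class of a single $j$-cycle to a sum of $p$ dual classes, so in that basis the limit cannot be read off coordinate-wise. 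The paper's resolution, which is what your argument is missing, is an inductive choice of free basis of $\Gamma_m$,
\[
W_m:=\bigl(W_{m-1}\setminus\{s_{j_m}^{M(m)}\}\bigr)\cup\{s_{j_m}^{N(m,j_m)}\}\cup\bigl\{w^{-1}s_{j_m}^{lM(m)}ws_{j_m}^{-lM(m)}\bigr\},
\]
the last set running over the retained $w$ and $1\le l<n(m,j_m)$: one trades the conjugates $s^{lM}ws^{-lM}$ used in Lemma \ref{Lem:Conn} for commutator-type elements. Since $s^{lM}ws^{-lM}=w\cdot\bigl(w^{-1}s^{lM}ws^{-lM}\bigr)$, the base-change rule of Lemma \ref{Lem:auto} collapses the sum $\sum_{l}[q_{(s^{lM}ws^{-lM})}]_0$ appearing in Lemma \ref{Lem:Conn} to the single class $[q_{(w;W_m)}]_0$ modulo torsion, so every connecting map becomes literally diagonal: multiplication by $n(m,j_m)$ on the one $s_{j_m}$-power generator, the identity on every retained generator, and the new generators start fresh $\mathbb{Z}$-summands. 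In that basis the $k$ odometer directions never interact, the free part of the limit is visibly $\bigl(\bigoplus_{j}\Upsilon(N_j)\bigr)\oplus\mathbb{Z}^{\oplus\infty}$ with the torsion and unit as in Theorem \ref{Thm:Inf}, and the Szele argument finishes as you say. Supplying this basis (or an equivalent change-of-basis justification for your axis-sum classes together with a complementary summand) is what your proof still needs.
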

\begin{proof}
For each $j=1, \ldots, k$, take a sequence
$(n({m, j}))_m$ of natural numbers that satisfies the equation
$\prod_m n(m, j)=N_j$.
We further assume that for each $m$, only one $j$, say $j_m$, satisfies $n(m, j)\neq 1$.
Put $N(m, j):=n(1, j)\cdots n(m, j)$ and $M(m):=N(m-1, j_m)$.
Then for each $m$, consider the surjective homomorphism
$q_m\colon\mathbb{F}_n\rightarrow \bigoplus_{j=1}^k\mathbb{Z}/N(m, j)$ defined by mapping $s_j$ to the canonical generator of $j$th direct summand for $j=1, \ldots, k$
and the other $s\in S$ to $0$.
Set $\Gamma_m:=\ker(q_m).$
Then the sequence $(\Gamma_m)_m$ defines a profinite Cantor system $\alpha$.
By definition we have $\gamma_{N_1,\ldots, N_k}^{(n)}=\beta\times\alpha$.

Next we inductively choose suitable free bases of $\Gamma_m$'s as follows.
First set $W_0:=S$ and $N(0, j)=1$ for convenience.
Then define $W_m$ by
\[W_m:=\left(W_{m-1}\setminus \{s_{j_m}^{M(m)}\}\right)\cup\{s_{j_m}^{N(m, j_m)}\}\cup Z_m,\]
where
\[Z_m:=\left\{w^{-1}s_{j_m}^{lM(m)}ws_{j_m}^{-lM(m)}: w\in W_{m-1}\setminus \{s_{j_m}^{M(m)}\}, 1\leq l< n(j_m, m)\right\}.\]
It is easy to check that for each $m$, the set $W_m$
is a free basis of $\Gamma_m$. 

Combining the computations used in the proofs of Lemmas \ref{Lem:auto} and \ref{Lem:Conn}, we can show that
the free part of the $K_0$-map induced from the canonical inclusion
\[C(\partial\mathbb{F}_n\times(\mathbb{F}_n/\Gamma_{m-1}))\rtimes\mathbb{F}_n\rightarrow C(\partial\mathbb{F}_n\times(\mathbb{F}_n/\Gamma_{m}))\rtimes\mathbb{F}_n\]
is given by
\[[q_{({s_{j_m}^{N(m-1, j_m)}};W_{m-1})}]_0\mapsto n(m, j_m)[q_{({s_{j_m}^{N(m, j_m)}}; W_m)}]_0\]
and
\[[q_{(t, W_{m-1})}]_0\mapsto [q_{(t; W_m)}]_0 {\rm\ for\ }t\in W_{m-1}\setminus\{s_{j_m}^{N(m-1, j_m)}\}.\]
Now the proof of Theorem \ref{Thm:Inf} completes the computation.
\end{proof}

The invariants appearing in Theorem \ref{Thm:odo} are completely classified
in terms of $(n; N_1,\ldots, N_k)$ as follows.
A supernatural number $N$ is recovered from the group $\Lambda(N)$ by the formula
$N={\rm lcm}\{{\rm ord}(x):x\in\Lambda(N)\setminus\{ e\}\}.$
On the other hand, from the group $G=\left(\bigoplus_{i=1}^k\Upsilon(N_i)\right)\oplus\mathbb{Z}^{\oplus \infty}$($\cong K_0/K_0^{\rm tor}$),
we can recover the subgroup $\bigoplus_{i=1}^k\Upsilon(N_i)$
as the subgroup generated by the subset of all elements $x$ such that the set
$\{n\in\mathbb{N}:{\rm there\ exists\ } y \in G {\rm \ with\ } ny=x\}$ is infinite.
Then it is known that the two groups $\bigoplus_{i=1}^k\Upsilon(N_i)$ and
$\bigoplus_{i=1}^m\Upsilon(M_i)$ are isomorphic if and only if
$k=m$ and there are a permutation $\sigma\in\mathfrak{S}_k$ and natural numbers $n_1, \ldots, n_k$,
$m_1, \ldots, m_k$ such that
$n_iN_i=m_iM_{\sigma(i)}$ holds for all $i$.
(This follows from Baer's theorem \cite[Vol.II, Prop.86.1]{Fuc} and the isomorphism condition of groups $\Upsilon(M)$.)

On the set of all finite sequences of infinite supernatural numbers,
we define the equivalence relation $\sim$ as follows.
For two finite sequences $(N_1, \ldots, N_k)$ and $(M_1, \ldots, M_l)$,
we say the relation $\sim$ holds if
$k=l$ and there are a permutation $\sigma\in\mathfrak{S}_k$ and natural numbers $n_1, \ldots, n_k$,
$m_1, \ldots, m_k$ such that $\prod_{i=1}^k n_i=\prod_{i=1}^k m_i$
and $n_iN_i=m_iM_{\sigma(i)}$ hold for all $i$.
Denote by $[N_1,\ldots, N_k]$ the equivalence class of $(N_1, \ldots, N_k)$ under $\sim$.
From the above observations, the equivalence class $[N_1, \ldots, N_k]$
is a complete invariant of the group $\left(\bigoplus_{i=1}^k\Upsilon(N_i)\right)\oplus\mathbb{Z}^{\oplus \infty}\oplus\Lambda(N_1\cdots N_k)$.
Here we record it as a proposition.
\begin{Prop}\label{Prop:grp}
For a sequence $N_1, \ldots, N_k$ of infinite supernatural numbers,
define the group $G(N_1, \ldots, N_k)$ by
\[\left(\bigoplus_{i=1}^k\Upsilon(N_i)\right)\oplus\mathbb{Z}^{\oplus \infty}\oplus\Lambda(N_1\cdots N_k).\]
Then two groups $G(N_1, \ldots, N_k)$ and $G(M_1, \ldots, M_l)$ are isomorphic
if and only if $[N_1, \ldots, N_k]=[M_1, \ldots, M_m]$,
where $[\cdot]$ denotes the equivalence class of the equivalence relation $\sim$ defined in the above.
In particular, for two free groups $\mathbb{F}_n$, $\mathbb{F}_{m}$ and for two finite sequences of infinite supernatural numbers $N_1, \ldots, N_k$, $M_1,\ldots, M_{l}$ with $k\leq n$ and $l\leq m$,
the pairs $(K_0, [1]_0)$ of the corresponding two $\gamma$'s are isomorphic if and only if
$n=m$ and $[N_1, \ldots, N_k]=[M_1,\ldots, M_{l}]$ hold.
\end{Prop}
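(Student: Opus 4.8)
The plan is to read off the two isomorphism invariants of $G(N_1,\ldots,N_k)$ from its torsion subgroup and its free quotient, and then to reconcile the resulting conditions with the relation $\sim$; the only delicate point will be the product condition $\prod n_i=\prod m_i$. First I would split $G(N_1,\ldots,N_k)$ into its torsion subgroup $\Lambda(N_1\cdots N_k)$ and its free quotient $\bigl(\bigoplus_{i=1}^k\Upsilon(N_i)\bigr)\oplus\mathbb{Z}^{\oplus\infty}$, both of which are preserved by any group isomorphism (the torsion subgroup is characteristic, and the free quotient is functorial). Applying the recovery formula $N={\rm lcm}\{{\rm ord}(x):x\in\Lambda(N)\setminus\{e\}\}$ to the torsion part shows that $G(N_1,\ldots,N_k)\cong G(M_1,\ldots,M_l)$ forces $N_1\cdots N_k=M_1\cdots M_l$, while applying the recovery of the subgroup of infinitely divisible elements to the free quotient forces $\bigoplus_{i=1}^k\Upsilon(N_i)\cong\bigoplus_{j=1}^l\Upsilon(M_j)$. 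Conversely, these two conditions together with $\mathbb{Z}^{\oplus\infty}\cong\mathbb{Z}^{\oplus\infty}$ give an isomorphism by taking a direct sum. So it remains to show that the conjunction of $\bigoplus\Upsilon(N_i)\cong\bigoplus\Upsilon(M_j)$ and $N_1\cdots N_k=M_1\cdots M_l$ is equivalent to $[N_1,\ldots,N_k]=[M_1,\ldots,M_l]$.

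By the stated isomorphism criterion for the groups $\Upsilon(\cdot)$ (Baer's theorem), $\bigoplus\Upsilon(N_i)\cong\bigoplus\Upsilon(M_j)$ is equivalent to the existence of $k=l$, a permutation $\sigma\in\mathfrak{S}_k$, and naturals $n_i,m_i$ with $n_iN_i=m_iM_{\sigma(i)}$; the relation $\sim$ adds to this exactly the requirement $\prod n_i=\prod m_i$. In one direction this is immediate: taking the product of the equalities $n_iN_i=m_iM_{\sigma(i)}$ gives $(\prod n_i)(\prod N_i)=(\prod m_i)(\prod M_j)$, and since $\prod n_i=\prod m_i$ is a genuine natural number it cancels to yield $\prod N_i=\prod M_j$.

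The main obstacle is the reverse implication: given only the relations $n_iN_i=m_iM_{\sigma(i)}$ and the separately known equality $N_1\cdots N_k=M_1\cdots M_l$, I must produce a possibly different solution with balanced products. I would argue primewise. Writing $P:=N_1\cdots N_k=M_1\cdots M_l$, at every prime $p$ with $P(p)<\infty$ each $N_i(p),M_j(p)$ is finite, so summing the exponent relations $n_i(p)+N_i(p)=m_i(p)+M_{\sigma(i)}(p)$ over $i$ and using $\sum_iN_i(p)=\sum_jM_j(p)=P(p)$ forces $\sum_in_i(p)=\sum_im_i(p)$ automatically. Hence the natural numbers $\prod n_i$ and $\prod m_i$ can disagree only at primes with $P(p)=\infty$, and since both have finite support this occurs at only finitely many primes. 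At each such $p$ there is an index $i$ with $N_i(p)=\infty$, and then necessarily $M_{\sigma(i)}(p)=\infty$ as well (otherwise no finite solution exists), so the relation $n_iN_i=m_iM_{\sigma(i)}$ reads $\infty=\infty$ at $p$; I may therefore increase $n_i(p)$ or $m_i(p)$ by the integer discrepancy to balance $\sum_in_i(p)$ against $\sum_im_i(p)$ without disturbing any relation or any other prime. Carrying this out at the finitely many offending primes yields naturals with $\prod n_i=\prod m_i$, establishing $\sim$ and completing the main statement.

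Finally, for the ``in particular'' clause the torsion part is $\Lambda((n-1)N_1\cdots N_k)$ and the distinguished unit is $[(n-1)^{-1}]$, whose order is $n-1$; since any isomorphism of pairs preserves this order, one gets $n=m$ at once, and then $(n-1)N_1\cdots N_k=(m-1)M_1\cdots M_l$ cancels the finite factor $n-1=m-1$ to give $N_1\cdots N_k=M_1\cdots M_l$. The free-quotient and product-balancing arguments above then finish the ``only if'' direction. For ``if'', when $n=m$ and $[N_1,\ldots,N_k]=[M_1,\ldots,M_l]$ the product condition yields $N_1\cdots N_k=M_1\cdots M_l$, so the torsion groups $\Lambda((n-1)N_1\cdots N_k)$ literally coincide and carry the same element $[(n-1)^{-1}]$; combining the identity there with an isomorphism of the (unit-free) free quotients produces an isomorphism of pairs, as required.
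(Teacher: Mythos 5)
Your proof is correct and follows essentially the same route as the paper: split $G(N_1,\ldots,N_k)$ into its torsion subgroup $\Lambda(N_1\cdots N_k)$ and torsion-free quotient, recover the product $N_1\cdots N_k$ from the order formula, recover $\bigoplus_i\Upsilon(N_i)$ inside the free quotient as the subgroup generated by infinitely divisible elements, and invoke Baer's theorem. The one place you go beyond the paper is welcome: the paper asserts the reconciliation with the relation $\sim$ as immediate ``from the above observations,'' whereas you supply the primewise balancing argument (automatic equality of $\sum_i n_i(p)$ and $\sum_i m_i(p)$ at primes where the common product is finite, and free adjustment of exponents at primes where some $N_i(p)=M_{\sigma(i)}(p)=\infty$), which is exactly the delicate step left implicit there.
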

From Proposition \ref{Prop:grp} and Matui's theorem \cite{Mat} with a little extra effort, we can classify the strong orbit equivalence classes,
the topological full groups, the crossed products, and the continuous orbit equivalence classes of $\gamma_{N_1,\ldots, N_k}^{(n)}$'s.

To state these definitions, first we recall the definition of orbit cocycles.
\begin{Def}\label{Def:coc}
Let $\alpha_1, \alpha_2$ be minimal topologically free Cantor systems of groups $\Gamma_1$, $\Gamma_2$,
respectively.
Let $F \colon X_1\rightarrow X_2$ be an orbit preserving homeomorphism between $\alpha_1$ and $\alpha_2$.
A map $c\colon \Gamma_1\times X_1\rightarrow \Gamma_2$
is said to be an orbit cocycle of $F$
if it satisfies the equation $F(\alpha_1(g)(x))=\alpha_2(c(g, x))(F(x))$ for all $(g, x)\in \Gamma_1\times X_1$.
\end{Def}
Note that by topological freeness,
the cocycle equation $$c(g, h.x)c(h, x)=c(gh, x)$$
holds on a dense subset of $\Gamma_1\times\Gamma_1\times X_1$.
If we further assume that either $\alpha_2$ is free or $c$ is continuous,
then the cocycle equation holds on $\Gamma_1\times\Gamma_1\times X_1$.

\begin{Def}\label{Def:COE}
Let $\alpha_1$, $\alpha_2$, $\Gamma_1$, and $\Gamma_2$ be as above.
Two Cantor systems $\alpha_1$ and $\alpha_2$ are said to be continuously orbit equivalent
if there exists an orbit preserving homeomorphism $F \colon X_1\rightarrow X_2$ such that
both $F$ and $F^{-1}$ admit a continuous orbit cocycle.
\end{Def}
It is easy to check that two minimal topologically free Cantor systems are continuously orbit equivalent
if and only if their transformation groupoids are isomorphic as $\acute{{\rm e}}$tale groupoids.

Next we recall the definition of the topological full group.
This is the group that gathers the local behaviors of a topologically free Cantor system.
\begin{Def}
The topological full group $[[\gamma]]$ of a topologically free Cantor system $\gamma$ is
the group of all homeomorphisms $F$ on $X$ with the property that
for each $x\in X$, there exist a neighborhood $U$ of $x$ and
$s\in\Gamma$ that satisfy $F(y)=s.y$ for all $y\in U$.
\end{Def}

Clearly, the continuous orbit equivalence implies the isomorphism of topological full groups.
With minimality assumption, H. Matui shows the converse also holds.

\begin{Thm}[A special case of {\cite[Theorem 3.10]{Mat}}]\label{Thm:Mat}
Let $\gamma_i$ be a minimal topologically free Cantor system of a group $\Gamma_i$ for $i=1, 2$.
Then the following are equivalent.
\begin{enumerate}[\upshape (1)]
\item They are continuously orbit equivalent.
\item Their topological full groups are isomorphic $($as discrete groups$)$.
\item The commutator subgroups of their topological full groups are isomorphic $($as discrete groups$)$.
\end{enumerate}
\end{Thm}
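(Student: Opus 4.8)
The plan is to prove the cycle $(1)\Rightarrow(2)\Rightarrow(3)\Rightarrow(1)$, with essentially all of the content lying in the reconstruction implication $(3)\Rightarrow(1)$.

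The two remaining implications are formal. For $(1)\Rightarrow(2)$ I would invoke the reformulation recorded just before the statement: a continuous orbit equivalence of minimal topologically free Cantor systems is the same as an isomorphism of their transformation groupoids $\mathcal{G}_i:=X_i\rtimes_{\gamma_i}\Gamma_i$ as \'etale groupoids. The point is that the topological full group is intrinsic to the groupoid: it is precisely the group of compact open bisections $U\subset\mathcal{G}_i$ for which both the source and range maps restrict to homeomorphisms onto the whole unit space $X_i$, each such $U$ giving the homeomorphism $r\circ(s|_U)^{-1}$ of $X_i$. Thus a groupoid isomorphism transports $[[\gamma_1]]$ isomorphically onto $[[\gamma_2]]$. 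For $(2)\Rightarrow(3)$ there is nothing to do, since the commutator subgroup is characteristic, so any isomorphism $[[\gamma_1]]\cong[[\gamma_2]]$ restricts to an isomorphism $D([[\gamma_1]])\cong D([[\gamma_2]])$.

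The heart of the matter is $(3)\Rightarrow(1)$, where from a bare abstract isomorphism $\Phi\colon D([[\gamma_1]])\to D([[\gamma_2]])$ one must manufacture a continuous orbit equivalence. My approach would be a \emph{spatial realization} argument of Rubin type, in three stages. First, I would check that $D([[\gamma]])$ acts on $X$ with enough local richness: minimality and topological freeness make each $\mathcal{G}_i$ minimal and essentially principal, and from this one shows that $D([[\gamma]])$ still acts minimally and supplies, for any prescribed nonempty clopen set, nontrivial elements supported inside it, which are the genericity conditions needed to apply a reconstruction theorem. Second, I would recover the Cantor space $X$ purely group-theoretically from $D([[\gamma]])$, encoding clopen subsets as supports of group elements and recovering points as suitable ultrafilters (or germs) of this support structure; the isomorphism $\Phi$ then descends to a homeomorphism $F\colon X_1\to X_2$ that spatially implements it, in the sense that $\Phi(g)=F\circ g\circ F^{-1}$ for every $g\in D([[\gamma_1]])$. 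Third, since $D([[\gamma_i]])$ generates the orbit equivalence relation of $\gamma_i$, the conjugating homeomorphism $F$ is automatically orbit preserving, and reading off the associated orbit cocycles from compact open bisections shows they are continuous, which is exactly continuous orbit equivalence.

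The main obstacle is the spatial realization in the second stage: one is handed only the group structure of $D([[\gamma]])$ and must reconstruct both the topology of the Cantor set and the \'etale structure of the groupoid. The decisive input is a Rubin-type rigidity theorem asserting that abstract isomorphisms between such ``rich'' transformation groups are necessarily spatially implemented; the real labour is in translating minimality and essential principality into the group-theoretic genericity hypotheses of that theorem and verifying them for $D([[\gamma]])$. This is precisely the content furnished by \cite[Theorem 3.10]{Mat}, of which the present statement is a special case.
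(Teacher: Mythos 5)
Your proposal is correct and rests on the same foundation as the paper: the paper gives no independent proof of this theorem at all, importing it wholesale as a special case of \cite[Theorem 3.10]{Mat}, and your treatment of the only nontrivial implication $(3)\Rightarrow(1)$ likewise defers precisely to that theorem. Your additional details---the transformation-groupoid reformulation and intrinsic description of $[[\gamma]]$ via full compact open bisections for $(1)\Rightarrow(2)$, the characteristic-subgroup observation for $(2)\Rightarrow(3)$, and the Rubin-type spatial realization outline for $(3)\Rightarrow(1)$---are accurate and faithfully describe the structure of Matui's actual argument, so the two approaches are essentially identical.
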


We next introduce a notion of strong orbit equivalence of Cantor systems for general groups as follows.
\begin{Def}\label{Def:SOE}
We define the relation $R$ on the class of Cantor systems as follows.
For two Cantor systems $\alpha_i\colon \Gamma_i\curvearrowright X_i$, $i=1, 2$.
we declare the relation $R(\alpha_1, \alpha_2)$ holds if the following conditions hold.
There is an orbit preserving homeomorphism $F \colon X_1\rightarrow X_2$ and a generating set $S_i$ of $\Gamma_i$ for $i=1, 2$ that admit an orbit cocycle $c$ of $F$
with the property that
for each $s\in S_1$, the restriction of $c$ on $\{s\}\times X_1$
has at most one point of discontinuity,
and the same condition also holds when we replace $X_1$ by $X_2$, $F$ by $F^{-1}$, and $S_1$ by $S_2$.
Unfortunately, the relation $R$ seems not to satisfy the transitivity.
(This is in fact an equivalence relation if we only consider the minimal
Cantor $\mathbb{Z}$-systems.
This is already highly nontrivial; this is a consequence of a result in \cite{GPS}.)
For this reason, we define the equivalence relation $\sim$ to be the one generated by $R$,
and say $\alpha_1$ is strongly orbit equivalent to $\alpha_2$
if $\alpha_1\sim\alpha_2$ holds.
\end{Def}
\begin{Exm}
Let $\alpha_k$ and $\beta_k$ be Cantor $\mathbb{Z}$-systems for $k=1,\ldots, n$.
Assume that the underlying spaces of $\alpha$'s are the same one, say $X$,
and similarly for that of $\beta$'s, say $Y$.
Suppose that the relations $R(\alpha_k, \beta_k)$, $k=1, 2, \ldots, n$ are implemented by the same homeomorphism
$F \colon X\rightarrow Y$.
Then the free product dynamical systems \lower0.25ex\hbox{\LARGE $\ast$}${}_k\alpha_k$ and \lower0.25ex\hbox{\LARGE $\ast$}${}_k\beta_k$
are strongly orbit equivalent in the sense of Definition \ref{Def:SOE}.
\end{Exm}
Before completing the classification, we need a lemma about the strong orbit equivalence,
which may be well-known to specialists.
This claims that for two Cantor systems of free groups, the isomorphism of $K_0$-invariants is
a necessary condition for strong orbit equivalence. The idea of the proof comes from \cite{GPS}.
\begin{Lem}\label{Lem:SOE}
Let $\gamma_i\colon\mathbb{F}_{n_i}\curvearrowright X_i$ be topologically free Cantor systems with $2\leq n_i<\infty$ for $i=1,2$.
Assume that $\gamma_1$ is strongly orbit equivalent to $\gamma_2$.
Then their $K_0$-invariants $(K_0(C(X_i)\rtimes_{\gamma_i} \mathbb{F}_{n_i}), [1]_0); i=1, 2$ are isomorphic.
\end{Lem}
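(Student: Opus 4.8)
The plan is to compute both $K_0$-invariants through the Pimsner--Voiculescu exact sequence \cite{PV} and to track the \emph{coboundary subgroup} rather than the crossed product directly. For a (minimal) topologically free Cantor system $\gamma\colon\Gamma\curvearrowright X$, write $B_\gamma\subseteq C(X,\mathbb{Z})$ for the subgroup generated by all $f-f\circ\gamma(g)$ with $f\in C(X,\mathbb{Z})$, $g\in\Gamma$, and set $H_0(\gamma):=(C(X,\mathbb{Z})/B_\gamma,\,[\mathbb{1}])$. Writing $\partial_g f:=f-f\circ\gamma(g)$ and expressing $\partial_{g_1g_2}$ and $\partial_{g^{-1}}$ through the $\partial_{g_i}$, a short algebraic computation shows that $B_\gamma$ is already generated by the coboundaries coming from any fixed generating set of $\Gamma$. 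For $\Gamma=\mathbb{F}_n$ the map $\eta_\gamma$ of the Pimsner--Voiculescu sequence recalled in the proof of Theorem \ref{Thm:Inf} has image exactly $B_\gamma$, so $K_0(C(X)\rtimes_\gamma\mathbb{F}_n)=C(X,\mathbb{Z})/B_\gamma$ with the unit represented by $[\mathbb{1}]$; that is, the $K_0$-invariant coincides with $H_0(\gamma)$. It therefore suffices to prove that $H_0$ is invariant under the relation generating strong orbit equivalence.

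Since the isomorphism relation on invariants is an equivalence relation containing $R$ (Definition \ref{Def:SOE}) and $\sim$ is generated by $R$, I would reduce to the single step: if $R(\gamma_1,\gamma_2)$ holds, then $H_0(\gamma_1)\cong H_0(\gamma_2)$. Working with $H_0$ rather than $K_0$ directly circumvents the fact that a chain realizing $\sim$ may pass through Cantor systems of groups other than free groups. So let $F\colon X_1\to X_2$ be the orbit preserving homeomorphism, with generating sets $S_i$ and orbit cocycles $c$ of $F$ and $d$ of $F^{-1}$ as in Definition \ref{Def:SOE}. The pullback $F^{\ast}\colon g\mapsto g\circ F$ is a group isomorphism $C(X_2,\mathbb{Z})\to C(X_1,\mathbb{Z})$ fixing $\mathbb{1}$, so everything reduces to the inclusions $F^{\ast}(B_{\gamma_2})\subseteq B_{\gamma_1}$ and $(F^{-1})^{\ast}(B_{\gamma_1})\subseteq B_{\gamma_2}$, which together with bijectivity of $F^{\ast}$ yield the isomorphism carrying $[\mathbb{1}]$ to $[\mathbb{1}]$. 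By the first paragraph it is enough to treat generator coboundaries $g-g\circ\gamma_2(s')$, $s'\in S_2$. Using the defining identity $F^{-1}(\gamma_2(s')y)=\gamma_1(d(s',y))F^{-1}(y)$, I obtain $F^{\ast}(g-g\circ\gamma_2(s'))=\tilde g-\tilde g\circ\Phi$, where $\tilde g=g\circ F$ and $\Phi:=F^{-1}\circ\gamma_2(s')\circ F$ is a homeomorphism of $X_1$ with $\Phi(x)=\gamma_1(d(s',F(x)))\,x$.

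The key point is that, because $s'$ is a single generator, $d(s',\cdot)$ has at most one point of discontinuity, so $d(s',F(\cdot))$ is locally constant off a single point $x_0\in X_1$; hence $\Phi$ is locally a single $\gamma_1$-element off $x_0$. Choosing a small clopen neighbourhood $N\ni x_0$ on which (and on whose image $\Phi(N)$) $\tilde g$ is constant, and a clopen partition $X_1\setminus N=\bigsqcup_j U_j$ with $\Phi|_{U_j}=\gamma_1(g_j)$ (finitely many $g_j$, by compactness), a regrouping of $\tilde g\circ\Phi$ writes $\tilde g-\tilde g\circ\Phi$ as an explicit element of $B_{\gamma_1}$ plus a ``defect'' supported near $x_0$, namely a multiple of $\chi_{\Phi(N)}-\chi_N$. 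The delicate step, and the heart of the argument, is to see that this defect lies in $B_{\gamma_1}$ itself, not merely in the kernel of the invariant measures (which is all an averaging argument would give, and which would only yield orbit-equivalence-level information). Here the single discontinuity is essential: since there is exactly one exceptional block $N$, I can evaluate the very same regrouping identity on the constant function $\mathbb{1}$, where the left-hand side vanishes identically; this forces $\chi_{\Phi(N)}-\chi_N\in B_{\gamma_1}$ directly, with no measure-theoretic input.

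Feeding this back shows $\tilde g-\tilde g\circ\Phi\in B_{\gamma_1}$ for every $\tilde g$, hence $F^{\ast}(B_{\gamma_2})\subseteq B_{\gamma_1}$; the symmetric computation with $F^{-1}$ and $S_2$ gives the reverse inclusion, so $F^{\ast}$ descends to the required isomorphism $H_0(\gamma_2)\cong H_0(\gamma_1)$ of $K_0$-invariants, following the idea of \cite{GPS}. I expect the main obstacle to be precisely the defect analysis above --- making rigorous that a single discontinuity point yields exactly one exceptional clopen block and that the constant-function identity annihilates its class --- together with the bookkeeping ensuring that every manipulation stays inside the coboundary group; the remaining reductions are routine.
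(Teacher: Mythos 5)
Your proposal is correct and is essentially the paper's own proof. Both arguments use the Pimsner--Voiculescu sequence to identify the $K_0$-invariant with the pair $(C(X,\mathbb{Z})/B_\gamma,\,[1_X])$, where $B_\gamma$ is the coboundary subgroup (the paper writes it as the subgroup $N$ generated by the elements $\chi_E-\chi_{\gamma(s)(E)}$, which is the same group), reduce to a single step of the relation $R$, and transfer generator coboundaries across the orbit equivalence using that the cocycle, being locally constant off one point, takes only finitely many values on any clopen set on which it is continuous. The only real divergence is the treatment of the discontinuity point: the paper works with characteristic functions and, when the bad point lies in $E$, simply replaces $E$ by $X\setminus E$ (this flips the sign of $\chi_E-\chi_{\gamma_1(s)(E)}$ and moves the bad point out of the picture), so no defect term ever arises; you instead excise a clopen neighbourhood $N$ of the bad point and must then kill the defect $a\left(\chi_{\Phi(N)}-\chi_N\right)$, which your evaluation-at-the-constant-function trick does correctly --- equivalently, the identity $\chi_{\Phi(N)}-\chi_N=\sum_j\bigl(\chi_{U_j}-\chi_{U_j}\circ\gamma_1(g_j^{-1})\bigr)$ exhibits the defect as a coboundary directly. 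So your route is sound, just slightly longer at that one point. In one respect you are more careful than the paper: you note explicitly that a chain realizing $\sim$ may pass through Cantor systems of non-free groups, and that the invariant $(C(X,\mathbb{Z})/B_\gamma,[1_X])$ makes sense for all of them and is preserved at every link; the paper leaves this reduction implicit.
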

\begin{proof}
We may assume that the equalities $X_1=X_2=X$ hold and the identity map is an orbit preserving homeomorphism
that has orbit cocycles
each of which has discontinuous points at most one on each element of some generating sets $S_i$ of $\Gamma_i$.
By the Pimsner--Voiculescu six term exact sequence for free groups \cite{PV}, we obtain the isomorphism
\[K_0(C(X)\rtimes_{\gamma_i} \mathbb{F}_{n_i})\cong C(X,\mathbb{Z})/N_i,\]
where $N_i$ is the subgroup of $C(X,\mathbb{Z})$ generated by
elements of the form $\chi_E-\chi_{\gamma_i(s)(E)}$ for clopen subsets $E$ of $X$ and $s\in\mathbb{F}_{n_i}$.
Note that under the isomorphism, the unit $[1]_0$ is mapped to $1_X+N_i$.

From the above isomorphism, our claim follows from the equation $N_1=N_2$.
By the symmetry of the argument, it is enough to show that the inclusion $N_1\subset N_2$ holds.
To see this, let $E$ be a clopen subset and $s\in S_1$.
Replacing $E$ be $X\setminus E$ if necessarily, which does not change the difference $\chi_E -\chi_{\gamma_1(s)(E)}$ up to sign,
we may assume that there is an orbit cocycle $c$ that is continuous on $\{s\}\times E$.
Define $c_s(x):=c(s, x)$ for $s\in \Gamma_1$ and $x\in X$.
Set $\mathfrak{F}:=c_s(E)$, which is finite by the continuity assumption.
Then we have
\[\gamma_1(s)(E)=\bigsqcup_{g\in \mathfrak{F}} \gamma_2(g)(c_s^{-1}(\{g\})\cap E).\]
This shows \[\chi_E-\chi_{\gamma_1(s)(E)}=\sum_{g \in \mathfrak{F}}(\chi_{c_s^{-1}(\{g\})\cap E}-\chi_{\gamma_2(g)(c_s^{-1}(\{g\})\cap E)})\in N_2.\]
Since $S_1$ generates $\Gamma_1$,
we obtain the inclusion $N_1\subset N_2$.
\end{proof}
Now we give the classification results for $\gamma$'s.
\begin{Thm}\label{Thm:Cla}
Let $\gamma_1=\gamma_{N_1,\ldots, N_k}^{(n)}$ and $\gamma_2=\gamma_{M_1,\ldots, M_l}^{(m)}$ be as before.
Then the following conditions are equivalent.
\begin{enumerate}[\upshape (1)]
\item They are strongly orbit equivalent.
\item They are continuously orbit equivalent.
\item Their topological full groups are isomorphic.
\item The commutator subgroups of their topological full groups are isomorphic.
\item Their crossed products are isomorphic.
\item Their $K_0$-invariants $(K_0, [1]_0)$ are isomorphic.
\end{enumerate}
\end{Thm}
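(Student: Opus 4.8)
The plan is to prove the six conditions equivalent by closing a cycle of implications, most of whose edges are already in hand. The cited edges are as follows. First, $(2)\Leftrightarrow(3)\Leftrightarrow(4)$ is immediate from Matui's theorem (Theorem \ref{Thm:Mat}), since every $\gamma_{N_1,\dots,N_k}^{(n)}$ is an amenable minimal $\mathbb{F}_n$-system and hence minimal and topologically free. Next, $(5)\Rightarrow(6)$ is trivial, as an isomorphism of unital \Cs -algebras preserves $(K_0,[1]_0)$, while $(6)\Rightarrow(5)$ follows from the Kirchberg--Phillips classification together with Theorem \ref{Thm:odo}: the crossed products are Kirchberg algebras in the UCT class whose $K_1$-groups are \emph{all} isomorphic to $\mathbb{Z}^{\oplus\infty}$, so an isomorphism of $(K_0,[1]_0)$ automatically upgrades to an isomorphism of the full invariant $(K_0,[1]_0,K_1)$. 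On the orbit side, $(2)\Rightarrow(1)$ is formal, since a continuous orbit cocycle has no points of discontinuity and so realizes the relation $R$ of Definition \ref{Def:SOE}; and $(1)\Rightarrow(6)$ is exactly Lemma \ref{Lem:SOE}. Assembling these, the only missing edge is $(6)\Rightarrow(2)$, which closes the cycle $(2)\Rightarrow(1)\Rightarrow(6)\Rightarrow(2)$ and, with the blocks $(2)\Leftrightarrow(3)\Leftrightarrow(4)$ and $(5)\Leftrightarrow(6)$, finishes the theorem.

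Thus the whole proof reduces to manufacturing a continuous orbit equivalence out of an isomorphism of $K_0$-invariants. First I would translate $(6)$ into combinatorics via Proposition \ref{Prop:grp}: an isomorphism of $(K_0,[1]_0)$ forces $n=m$, $k=l$, and the existence of $\sigma\in\mathfrak{S}_k$ and naturals $(n_i),(m_i)$ with $\prod_i n_i=\prod_i m_i$ and $n_iN_i=m_iM_{\sigma(i)}$. The permutation $\sigma$ is harmless: the assignment $s_i\mapsto s_{\sigma(i)}$ for $i\le k$ and $s_i\mapsto s_i$ for $i>k$ permutes a free basis, hence extends to an automorphism of $\mathbb{F}_n$ that intertwines $\gamma_{N_1,\dots,N_k}^{(n)}$ with the reindexed system; this is a topological conjugacy, so in particular a continuous orbit equivalence. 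I may therefore assume $\sigma=\mathrm{id}$ and reduce to the commensurable case $n_iN_i=m_iM_i$ with $\prod_i n_i=\prod_i m_i$. Multiplying the relations and cancelling the common natural number $\prod_i n_i=\prod_i m_i$ yields $\prod_i N_i=\prod_i M_i$ as supernatural numbers, so the two systems share the same profinite ``total index''.

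The core claim is then: if $n_iN_i=m_iM_i$ for all $i$ and $\prod_i n_i=\prod_i m_i$, the systems $\gamma_{N_1,\dots,N_k}^{(n)}$ and $\gamma_{M_1,\dots,M_k}^{(n)}$ are continuously orbit equivalent. I would attack this through the groupoid reformulation noted after Definition \ref{Def:COE}: it suffices to produce an isomorphism of the \'etale transformation groupoids $(\partial\mathbb{F}_n\times Y)\rtimes\mathbb{F}_n$ and $(\partial\mathbb{F}_n\times Y')\rtimes\mathbb{F}_n$. Each system is presented, as in the proof of Theorem \ref{Thm:odo}, as a projective limit $\beta_n\times\alpha$ with $\alpha=\varprojlim\mathbb{F}_n/\Gamma_m$ and $\Gamma_m=\ker\bigl(\mathbb{F}_n\to\bigoplus_j\mathbb{Z}/N(m,j)\bigr)$, the naturals $N(m,j)$ being at our disposal subject only to $N(m,j)\mid N(m+1,j)$ and $\mathrm{lcm}_m N(m,j)=N_j$. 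The plan is to choose the two defining towers in tandem so that they can be intertwined: the relation $n_iN_i=m_iM_i$ governs the matching of the finite data in coordinate $i$, while the global constraint $\prod_i n_i=\prod_i m_i$ makes these coordinatewise finite discrepancies cancel so that the levels of the two towers stay comparable. From such an intertwining I would assemble the groupoid isomorphism as an inductive limit of the level groupoids $(\partial\mathbb{F}_n\times\mathbb{F}_n/\Gamma_m)\rtimes\mathbb{F}_n$, in the spirit of an Elliott-type approximate intertwining.

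The hard part will be the intertwining itself, and precisely the demand that the orbit equivalence be \emph{continuous} rather than merely strong. The warning sign is that the construction cannot reduce to orbit equivalence of the individual odometers: commensurable odometers $\alpha_{N_i}$ and $\alpha_{M_i}$ with $N_i\neq M_i$ are in general \emph{not} continuously orbit equivalent as $\mathbb{Z}$-systems (their pointed ordered $K_0$-groups already differ), so a naive coordinatewise matching produces exactly the isolated ``carry'' discontinuities of Definition \ref{Def:SOE} and yields only strong orbit equivalence. The essential point, and the crux of the argument, is that the boundary factor supplies room to absorb these discrepancies: since $\ker(\alpha)$ is a nontrivial normal subgroup of $\mathbb{F}_n$ acting minimally on $\partial\mathbb{F}_n$ (as in the proof of Theorem \ref{Thm:gmi}), one can use Proposition \ref{Prop:Bou2} and Remark \ref{Rem:Bou} to implement the coordinatewise reindexings by group elements acting locally constantly on $\partial\mathbb{F}_n\times Y$, replacing the would-be discontinuities by honest clopen pieces. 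Verifying that this can be arranged compatibly at every level, so that the level-wise partial isomorphisms respect the connecting maps of the two projective systems and pass to a genuine isomorphism of \'etale groupoids, is where essentially all of the difficulty resides; once it is in place the cycle closes and all six conditions are equivalent.
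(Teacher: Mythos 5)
Your reduction of the theorem to the single implication (6) $\Rightarrow$ (2) is correct and agrees with the paper: the blocks (2) $\Leftrightarrow$ (3) $\Leftrightarrow$ (4) (Theorem \ref{Thm:Mat}), (2) $\Rightarrow$ (1), (1) $\Rightarrow$ (6) (Lemma \ref{Lem:SOE}), and (5) $\Leftrightarrow$ (6) (Kirchberg--Phillips plus Theorem \ref{Thm:odo}, or the paper's route (2) $\Rightarrow$ (5) $\Rightarrow$ (6)) are all handled as you say, and the elimination of the permutation $\sigma$ by an automorphism of $\mathbb{F}_n$ is also the paper's first move. But your treatment of (6) $\Rightarrow$ (2) is not a proof: you explicitly defer the crux (``where essentially all of the difficulty resides''), and the mechanism you propose would not deliver it. There is no Elliott-type approximate-intertwining machinery for \'etale groupoids --- to get an isomorphism of the limit groupoids you would need genuinely commuting embeddings of the level groupoids, and these cannot exist naively because the level systems $(\partial\mathbb{F}_n\times\mathbb{F}_n/\Gamma_m)\rtimes\mathbb{F}_n$ of the two towers have non-isomorphic $K_0$-data when the finite multiplicities $n_i, m_i$ differ. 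Likewise, Proposition \ref{Prop:Bou2} and Remark \ref{Rem:Bou} are contraction statements used in the paper for minimality and pure infiniteness; nothing in them converts the ``carry'' discontinuities of mismatched odometers into locally constant cocycles, and you give no construction that does.

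The paper's actual argument for (6) $\Rightarrow$ (2) is an algebraic finite-index trick, not a tower intertwining. Since continuous orbit equivalence is transitive, one reduces to a single elementary move: $N_1=lL_1$, $N_j=L_j$ for $j\neq 1$, $M_2=lL_2$, $M_j=L_j$ for $j\neq 2$, with $l\in\mathbb{N}$. Let $\Lambda_i=\ker(q\circ\pi_i^{(n)})$, the index-$l$ subgroup of $\mathbb{F}_n$ killing $s_i$ modulo $l$, and let $Y_i=p_i^{-1}(\Lambda_i)$ for the equivariant quotient $p_i\colon X_i\rightarrow\mathbb{F}_n/\Lambda_i$. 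The clopen decomposition $X_i=\bigsqcup_{j=0}^{l-1}s_i^jY_i$ gives an explicit continuous orbit equivalence between $\gamma_i$ (an $\mathbb{F}_n$-system) and the product system $\tilde{\gamma_i}\boxtimes\lambda\colon\Lambda_i\times\mathbb{Z}_l\curvearrowright Y_i\times\mathbb{Z}_l$; this is precisely where the finite factor $l$ of the odometer in coordinate $i$ is absorbed into the acting group rather than fought against dynamically. Finally, using the Schreier basis $T_i=\{s_i^l,\ t,\ s_i^jts_i^{-j}t^{-1}: t\in S\setminus\{s_i\},\ 1\leq j\leq l-1\}$ of $\Lambda_i\cong\mathbb{F}_r$, $r=l(n-1)+1$, one checks that $\tilde{\gamma_i}$ is conjugate to $\gamma_{L_1,\ldots,L_k}^{(r)}$ --- the \emph{same} system for $i=1,2$ --- so $\gamma_1$ and $\gamma_2$ are both continuously orbit equivalent to $\gamma_{L_1,\ldots,L_k}^{(r)}\boxtimes\lambda$ and hence to each other. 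Without this (or some equally concrete) construction, your cycle does not close and the theorem is not proved.
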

\begin{proof}
The implications (2) $\Rightarrow$ (3) $\Rightarrow$ (4), (2) $\Rightarrow$ (1) and (2) $\Rightarrow$ (5) $\Rightarrow$ (6) are clear.
The implication (4) $\Rightarrow$ (2) follows from Theorem \ref{Thm:Mat} and
the implication (1) $\Rightarrow$ (6) follows from Lemma \ref{Lem:SOE}.
Now it is left to prove the implication (6) $\Rightarrow$ (2).

Assume condition (6) holds.
Then by Proposition \ref{Prop:grp}, the equalities
$n=m$ and $k=l$ hold and there are a permutation $\sigma\in\mathfrak{S}_k$
and two sequences $n_1, \ldots, n_k$ and $m_1, \ldots, m_k$ of natural numbers
that satisfy $\prod_j n_j=\prod m_j$ and $n_jN_j=m_jM_{\sigma(j)}$ for all $j$.
By conjugating an automorphism of the free group, we may assume $\sigma$ is trivial.
Since the continuous orbit equivalence is an equivalence relation,
we further assume that there are a sequence of infinite supernatural numbers
$L_1, \ldots, L_k$ and a natural number $l$ satisfying
$N_1=lL_1$, $N_j=L_j$ for $j\neq 1$,
$M_2=lL_2$, and $M_j=L_j$ for $j \neq 2$.
Denote by $\Lambda_i$ the kernel of the surjection
$\rho_i:=q\circ\pi_i$ for $i=1, 2$.
Here $q$ denotes the quotient homomorphism
from $\mathbb{Z}$ onto $\mathbb{Z}_l$.
By the definition of $\gamma$'s, for $i=1, 2$, we have an $\mathbb{F}_n$-equivariant quotient map
$p_i\colon X_i\rightarrow\mathbb{F}_n/\Lambda_i$.
Here $X_i$ denotes the underlying space of $\gamma_i$ for $i=1, 2$.
Then, with the notion $Y_i:=p_i^{-1}(\Lambda_i),$
the homeomorphism $F_i$ from $X_i=\bigsqcup_{j=0}^{l-1} s_i^jY_i$ onto $Y_i\times\mathbb{Z}_l$
given by $x=s_i^jy\in s_i^{j}Y_i\mapsto (y, [j])$ shows that $\gamma_i$ is continuously orbit equivalent
to the Cantor system
$\tilde{\gamma_i}\boxtimes\lambda\colon \Lambda_i\times\mathbb{Z}_l\curvearrowright Y_i\times \mathbb{Z}_l$.
Here $\tilde{\gamma_i}$ denotes the restriction of $\gamma|_{\Lambda_i}$ to the $\Lambda_i$-invariant subspace $Y_i$ of $X_i$, $\lambda$ denotes the left translation action of $\mathbb{Z}_l$ on itself,
and the symbol `$\boxtimes$' stands for the product action.
From this, it suffices to show that $\tilde{\gamma_1}$ and $\tilde{\gamma_2}$ are continuously orbit equivalent.
Notice that for $i=1, 2$, the set
\[T_i:=\left\{s_i^l, t, s_i^jts_i^{-j}t^{-1}: t\in S\setminus\{s_i\}, 1\leq j\leq l-1\right\}\]
is a free basis of $\Lambda_i$.
Set $r:=\sharp T_1=\sharp T_2=l(n-1)+1$.
Then the isomorphism $\Lambda\cong\mathbb{F}_r$ given by the free basis $T_i$
shows that the dynamical system $\tilde{\gamma_i}$ is conjugate to $\gamma_{L_1,\ldots, L_k}^{(r)}$.
Thus $\tilde{\gamma_1}$ is continuously orbit equivalent to $\tilde{\gamma_2}$.
\end{proof}
\begin{Rem}
In fact, Matui's theorem \cite[Theorem 3.9]{Mat} shows stronger conclusion.
The six conditions in Theorem \ref{Thm:Cla}
are also equivalent to the following condition.
\begin{enumerate}
\setcounter{enumi}{6}
\item There exist intermediate subgroups $D([[\gamma_i]])\subset\Gamma_i\subset[[\gamma_i]]$ ($i=1, 2$)
such that $\Gamma_1$ is isomorphic to $\Gamma_2$.
\end{enumerate}
\end{Rem}
Finally we end the section by showing that for any profinite Cantor system $\alpha$,
the group $[[\beta\times\alpha]]$ has the Haagerup property
and is not finitely generated. This follows from Matui's result \cite[Theorem 6.7]{Mat} with a little extra effort.
\begin{Prop}\label{Prop:Haa}
For any profinite Cantor $\mathbb{F}_n$-system $\alpha$,
the topological full group of the diagonal action $\beta\times\alpha$ has the Haagerup property
and is not finitely generated.
More generally, for any intermediate subgroup of the inclusion $D([[\beta\times\alpha]])\subset[[\beta\times\alpha]]$,
the same statement holds true.
\end{Prop}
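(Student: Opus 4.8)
The plan is to use the profinite structure of $\alpha$ to present $[[\beta\times\alpha]]$ as a directed union of topological full groups of one-sided shifts of finite type, each of which is Haagerup by Matui, and then to read off both conclusions from this presentation together with elementary permanence properties. Write $\alpha=\varprojlim\alpha_m$ for the defining strictly decreasing sequence $(\Gamma_m)_m$ of finite index subgroups of $\mathbb{F}_n$, with $\alpha_m\colon\mathbb{F}_n\curvearrowright\mathbb{F}_n/\Gamma_m$ and $X_m:=\partial\mathbb{F}_n\times(\mathbb{F}_n/\Gamma_m)$, so that $\beta\times\alpha=\varprojlim(\beta\times\alpha_m)$ on $X=\varprojlim X_m$. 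Each $\beta\times\alpha_m$ is minimal (Theorem \ref{Thm:bou}) and amenable, hence topologically free, so $H_m:=[[\beta\times\alpha_m]]$ is defined; and by Remark \ref{Rem:Cun} the transformation groupoid $X_m\rtimes\mathbb{F}_n$ is isomorphic, as an \'etale groupoid (the explicit generators there match the canonical Cartan subalgebra $C(X_m)$ with the diagonal), to the groupoid of an irreducible one-sided shift of finite type. Since the topological full group depends only on the transformation groupoid, Matui's theorem \cite[Theorem 6.7]{Mat} shows that every $H_m$ has the Haagerup property.

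Next I would assemble these into $[[\beta\times\alpha]]$. The factor maps $p_m\colon X\to X_m$ are $\mathbb{F}_n$-equivariant, so every $\bar F\in H_m$ lifts to the homeomorphism $\iota_m(\bar F)$ of $X$ implemented locally by the same elements of $\mathbb{F}_n$; this $\iota_m\colon H_m\to[[\beta\times\alpha]]$ is an injective homomorphism whose image is exactly the subgroup of those $F$ that \emph{descend to $X_m$}, i.e.\ satisfy $p_m\circ F=\bar F\circ p_m$ for some $\bar F$. These images increase with $m$, and they exhaust $[[\beta\times\alpha]]$: given $F$ with defining clopen partition $\{U_i\}$ and local elements $g_i\in\mathbb{F}_n$, compactness and the definition of the projective-limit topology let one refine $\{U_i\}$ to a partition subordinate to the fibres of some $p_M$, after which equivariance of $p_M$ forces $F$ to descend to $X_M$. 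Thus $[[\beta\times\alpha]]=\bigcup_m\iota_m(H_m)$ is a directed union of groups with the Haagerup property, and since the Haagerup property is preserved under directed unions of subgroups, $[[\beta\times\alpha]]$ has it as well.

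For the remaining assertions I would argue from this chain. As $(\Gamma_m)_m$ strictly decreases, for each $m$ there are two distinct fibres of $X_{m+1}\to X_m$ over a single point of $X_m$, and an element of $H_{m+1}$ exchanging clopen pieces of them does not descend to $X_m$; hence $\iota_m(H_m)$ is a \emph{strictly} increasing chain, so any finite subset lies in some $\iota_M(H_M)\subsetneq[[\beta\times\alpha]]$ and $[[\beta\times\alpha]]$ is not finitely generated. Now take $D([[\beta\times\alpha]])\subseteq\Lambda\subseteq[[\beta\times\alpha]]$. The Haagerup property passes to the subgroup $\Lambda$ automatically. Applying $\iota_m$ to commutator subgroups gives $D([[\beta\times\alpha]])=\bigcup_m\iota_m(D(H_m))$; if $\Lambda$ were finitely generated it would lie in some $\iota_M(H_M)$, forcing $D([[\beta\times\alpha]])\subseteq\iota_M(H_M)$. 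So it suffices to prove that $D([[\beta\times\alpha]])$ lies in no single $\iota_M(H_M)$, i.e.\ that for every $M$ some element of $D(H_m)$ with $m>M$ fails to descend to $X_M$.

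The step I expect to be the main obstacle is precisely this last one: producing a non-descending element inside the \emph{commutator} subgroup, not merely in $H_m$. I would build it explicitly as a product of two disjoint fibre-transpositions supported over a single point of $X_M$ at some level $m>M$; such an element visibly fails to preserve the $p_M$-fibres, and being an ``even'' permutation of fibres it should be realizable as a commutator in $H_m$, hence lie in $D(H_m)$. Verifying rigorously that it is genuinely a commutator in $H_m$ is the delicate point: here I would invoke the structure of $H_m$ as a shift-of-finite-type full group together with the simplicity of $D(H_m)$, which holds because $X_m\rtimes\mathbb{F}_n$ is purely infinite and minimal and hence \cite[Theorem 4.16]{Mat} applies. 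The other verifications — that each $\iota_m$ is a well-defined injective homomorphism with the stated image, and that the descent obstruction separates successive groups — are routine consequences of equivariance of the $p_m$ and compactness of $X$.
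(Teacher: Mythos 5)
Your overall route is the same as the paper's: realize $[[\beta\times\alpha]]$ as the increasing union of the subgroups $\iota_m(H_m)$ with $H_m=[[\beta\times\alpha_m]]$, get the Haagerup property of each $H_m$ from Remark \ref{Rem:Cun} together with \cite[Theorem 6.7]{Mat}, deduce the Haagerup property and non--finite generation of the union from permanence under directed unions and strictness of the chain, and, for an intermediate subgroup $\Lambda\supset D([[\beta\times\alpha]])$, reduce everything to producing, for each $M$, an element of $D(H_m)$ with $m>M$ that does not descend to level $M$. Up to that last point your argument is correct (your compactness proof of exhaustion is a sound elaboration of what the paper only sketches through its correspondence between $[[\gamma]]$ and the clopen partitions $P(\gamma)$).

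The genuine gap is exactly the step you flag, and your proposed repair does not work: simplicity of $D(H_m)$ (\cite[Theorem 4.16]{Mat}) is a statement about the normal subgroups \emph{of} $D(H_m)$, and it gives no criterion for deciding whether a given element of $H_m$ lies \emph{in} $D(H_m)$ --- that would require control of the abelianization $H_m/D(H_m)$, a different and harder computation. The paper's fix is elementary, and is what its citation of $D(\mathfrak{S}_k)\neq\{e\}$ for $k\geq 3$ encodes. Arrange fibres of size at least $3$ (the paper passes to a subsequence with $\sharp X_m/\sharp X_{m-1}>2$; in your indexing, take $m\gg M$), pick three sheets $\partial\mathbb{F}_n\times\{x_1\},\partial\mathbb{F}_n\times\{x_2\},\partial\mathbb{F}_n\times\{x_3\}$ over one point $\bar x\in X_M$, and let $T_{12}$, $T_{13}$ be the fibre-transpositions implemented by $g$ (with $g.x_1=x_2$) and $h$ (with $h.x_1=x_3$). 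A direct computation shows that $[T_{12},T_{13}]$ is the $3$-cycle of sheets $x_1\mapsto x_2\mapsto x_3\mapsto x_1$ implemented by $g$, $hg^{-1}$, $h^{-1}$ respectively; it is a single commutator, hence lies in $D(H_m)$. Note also that ``visibly fails to preserve the $p_M$-fibres'' is not quite the right obstruction: this element \emph{does} preserve the fibre over $\bar x$ setwise. The correct point is that an element of $\iota_M(H_M)$ has its (unique, by topological freeness) defining clopen partition pulled back from level $M$, so its implementing group elements must be constant across the sheets lying over a single point of $X_M$; for the $3$-cycle this would force $g=hg^{-1}=h^{-1}$, hence $g^3=e$, which is impossible for $e\neq g\in\mathbb{F}_n$. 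The same two checks (commutator identity plus the torsion-freeness argument), applied to the $\mathfrak{S}_4$-identity $(12)(34)=[(13)(24),(12)]$, would also rescue your double-transposition variant, but the $3$-cycle needs only fibre size $3$ and a single commutator.
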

\begin{proof}
Take a projective system $(\alpha_m)_m$ of finite $\mathbb{F}_n$-systems
whose projective limit is $\alpha$.
Denote by $X_m$, $X$ the underlying space of $\alpha_m$, $\alpha$, respectively.
We may assume that all the connecting maps of the projective system are not injective.

By Remark \ref{Rem:Cun} and the same reason as in \cite[Sections 6.7.4 and 6.7.5]{Mat}, for each $m$,
the transformation groupoid of $\beta\times\alpha_m$ is isomorphic to the associated groupoid of a shift of finite type (determined by the matrix given in Remark \ref{Rem:Cun}).
Hence the group $[[\beta\times\alpha_m]]$ has the Haagerup property by \cite[Theorem 6.7]{Mat}.
For a topologically free Cantor $\Gamma$-system $\gamma$ acting on the Cantor set $Y$, set
\[P(\gamma):=\left\{(U_s)_{s\in \Gamma}: U_s{\rm\ is\ clopen} {\rm \ for\ all\ } s\in\Gamma, Y=\bigsqcup_{s\in \Gamma} U_s=\bigsqcup_{s\in \Gamma} s.U_s\right\}.\]
Then there is a bijective correspondence between $P(\gamma)$ and $[[\gamma]]$ given by 
\[(U_s)_{s\in \Gamma} \longleftrightarrow \ad(\sum_{s\in\Gamma}\lambda_s\chi_{U_s})\Big|_{C(Y)}.\]
For each $m$, the canonical quotient map from $X$ onto $X_m$ induces the inclusion
from $P(\beta\times\alpha_m)$ into $P(\beta\times\alpha)$.
This with the above correspondence gives a group inclusion $\iota_m$ from $[[\beta\times\alpha_m]]$ into $[[\beta\times\alpha]]$.
Denote the image of $\iota_m$ by $G_m$.
Then, from the corresponding properties of $P(\beta\times\alpha_m)$'s,
the sequence $(G_m)_m$
is strictly increasing and its union exhausts $[[\beta\times\alpha]]$.
This shows that the group $[[\beta\times\alpha]]$ has the Haagerup property and is not finitely generated.

For intermediate subgroups, the Haagerup property is clear since
it passes to subgroups.
To show that the intermediate subgroups are not finitely generated,
by passing to a subsequence, we may assume that for each $m$,
the fraction $(\sharp X_m)/(\sharp X_{m-1})$ is greater than $2$.
Then from the fact $D(\mathfrak{S}_k)\neq\{e\}$ for $k\geq 3$,
we can check easily that for each $m$, the set $D(G_m)\setminus G_{m-1}$
is nonempty. This ends the proof.
\end{proof}

\subsection*{Acknowledgement}
The author would like to thank Masaki Izumi, Hiroki Matui, Narutaka Ozawa, and Mikael R\o rdam
for fruitful discussions and comments.
He also thanks to Yasuyuki Kawahigashi for advice on the manuscript.
He is supported by Research Fellow of the Japan Society for the Promotion
of Science (No.25-7810) and the Program of Leading Graduate Schools, MEXT, Japan.


\begin{thebibliography}{99}
\bibitem{Ana0} C. Anantharaman-Delaroche, {\it Syst\`emes dynamiques non commutatifs et moyennabilit\'e.} Math. Ann. {\bf 279} (1987). 297--315.
\bibitem{Ana} C. Anantharaman-Delaroche, {\it Purely infinite \Cs -algebras arising from dynamical systems.} Bull. Soc. Math. France {\bf 125} (1997), 199--225.
\bibitem{AS} R. J. Archbold, J.S. Spielberg, {\it Topologically free actions and ideals in discrete \Cs -dynamical systems.} Proc. Edinburgh Math. Soc. (2) {\bf 37} (1994), no. 1, 119--124. 
\bibitem{BO}
N. P. Brown, N. Ozawa,
 {\it \Cs -algebras and finite-dimensional approximations.} 
Graduate Studies in Mathematics {\bf 88}. American Mathematical Society, Providence, RI, 2008. xvi+509 pp.
\bibitem{Cun} J. Cuntz, {\it A class of \Cs -algebras and topological Markov chains II: reducible chains and the Ext-functor
for \Cs -algebras.} Invent. Math. {\bf 63} (1981), 25--40.
\bibitem{CK} J. Cuntz, W. Krieger, {\it A class of \Cs -algebras and topological Markov chains.}
Invent. Math. {\bf 56} (1980), no. 3, 251--268.
\bibitem{ES} G. A. Elliott, A. Sierakowski, {\it $K$-theory of certain purely infinite crossed products.} Preprint, available on arXiv:1110.6614 (2011).
\bibitem{Fuc} L. Fuchs, {\it Infinite Abelian Groups volume I, II.} Academic Press (1970), (1973).
\bibitem{Gre} P. Green, {\it The structure of imprimitivity algebras.} J. Funct. Anal. {\bf 36} (1980), 88--104.
\bibitem{GPS} T, Giordano, I. F. Putnam, C. F. Skau. {\it Topological orbit equivalence and \Cs -crossed products.} J.\ Reine\ Angew.\ Math. {\bf 469} (1995), 51--112.
\bibitem{KB} I. Kapovich, N. Benakli, {\it Boundaries of hyperbolic groups.}
Combinatorial and Geometric Group Theory, Contemp. Math. {\bf 296} (2002), 39--94.
\bibitem{Kat} T. Katsura, {\it A class of \Cs -algebras generalizing both graph algebras and homeomorphism \Cs -algebras IV, pure infiniteness.} J. Funct. Anal. {\bf 254} (2008), 1161--1187.
\bibitem{Kat2} T.  Katsura, {\it A construction of actions on Kirchberg algebras which induce given actions on their K-groups.} J.\ Reine\ Angew.\ Math. {\bf 617} (2008), 27--65.
%\bibitem{Ker} D. Kerr, {\it \Cs -algebras and topological dynamics: Finite approximation and paradoxicality.} Advanced Courses in Mathematics (2011).
\bibitem{Kir} E. Kirchberg, {\it The classification of purely infinite \Cs -algebras using Kasparov's
theory}, preprint.
\bibitem{LaS} M. Laca, J. Spielberg, {\it Purely infinite \Cs -algebras from boundary actions
of discrete groups.} J. Reine Angew. Math. {\bf 480} (1996), 125--139.
\bibitem{LS} R. C. Lyndon, P. E. Schupp, {\it Combinatorial Group Theory.} Springer-Verlag, New York (1977).
\bibitem{Mat} H. Matui, {\it Topological full groups of one-sided shifts of finite type.}
to appear in J.\ Reine\ Angew.\ Math.
\bibitem{Oz} N. Ozawa, {\it Amenable actions and applications.} International Congress of Mathematicians Vol II, Eur. Math. Soc., 2006, 1563--1580.
\bibitem{Phi} N. C. Phillips. {\it A classification theorem for nuclear purely infinite simple \Cs -algebras.} Doc. Math {\bf 5} (2000), 49--114.
\bibitem{PV} M. Pimsner, D. Voiculescu, {\it $K$-groups of reduced crossed products by free groups.} J. Operator. Theory {\bf 8} (1982), 131--156.
\bibitem{Rae} I. Raeburn, {\it Graph algebras.} CBMS Regional Conference Series in Mathematics {\bf 103} (2005).
\bibitem{RoS} G. Robertson, T. Steger, {\it Asymptotic $K$-theory for groups acting on $\tilde{A}_2$ buildings.}
Canad. J. Math. {\bf 53} (2001), 809–-833.
\bibitem{Rord} M. R\o rdam, {\it Classification of certain infinite simple \Cs -algebras.} J. Funct. Anal. {\bf 131} (1995), 415--458. 
\bibitem{Ror} M. R\o rdam, {\it Classification of nuclear \Cs -algebras.} Encyclopaedia of Mathematical Sciences {\bf 126} (2002), 1--145.
\bibitem{RS} M. R\o rdam, A. Sierakowski, {\it Purely infinite \Cs -algebras arising from crossed products.}
Ergodic Theory Dynam. Systems {\bf 32} (2012), 273--293.
\bibitem{RSc} J. Rosenberg, C. Schochet, {\it The Kunneth theorem and the universal coefficient theorem for Kasparov's generalized $K$-functor.} Duke Math. J. {\bf 55} (1987), no. 2, 431--474.
\bibitem{Spi} J. Spielberg, {\it Free product groups, Cuntz--Krieger algebras, and covariant maps.} Internat. J. Math. {\bf 2} (1991), 457--476.
\bibitem{Suz} Y. Suzuki, In preparation.
\bibitem{Tu} J.-L. Tu, {\it La conjecture de Baum--Connes pour les feuilletages moyennables.} $K$-theory
{\bf 17} (1999), 215--264.
\end{thebibliography}
\end{document}